\numberwithin{equation}{section}
\newcommand{\Z}{\mathbb{Z}}
\newcommand{\N}{\mathbb{N}}
\newcommand{\R}{\mathbb{R}}
\newcommand{\F}{\mathcal{F}}
\newcommand{\Sh}{\mathcal{S}}
\newcommand{\Mu}{\boldsymbol{\mu}}
\newcommand{\Nu}{\boldsymbol{\nu}}
\newcommand{\Y}{\boldsymbol{y}}
\newcommand{\bl}{\boldsymbol{\ell}}
\newcommand{\supp}{\mathop{\mathrm{supp}}}
\newcommand{\pa}{\partial}
\newcommand{\vphi}{\varphi}
\newcommand{\Op}{\mathop{\mathrm{Op}}}
\newcommand{\la}{\langle}
\newcommand{\ra}{\rangle}
\newcommand{\Xxi}{\boldsymbol{\xi}}
\theoremstyle{plain}
\newtheorem{thm}{Theorem}[section]
\newtheorem{prop}[thm]{Proposition}
\newtheorem{lem}[thm]{Lemma}
\newtheorem*{thmA}{Theorem A}
\newtheorem*{thmB}{Theorem B}
\newtheorem*{thmC}{Theorem C}
\newtheorem*{thmD}{Theorem D}
\theoremstyle{definition}
\newtheorem{rem}[thm]{Remark}
\begin{document}

\title[Multilinear pseudo-differential operators]
{Boundedness of multilinear pseudo-differential operators with $S_{0,0}$ class symbols on Besov spaces}

\author[N. Shida]{Naoto Shida}

\date{\today}

\address[N. Shida]
{Graduate School of Mathematics, Nagoya University, Furocho, Chikusaku, Nagoya,
Aichi, 464-8602, Japan}

\email[N. Shida]{naoto.shida.c3@math.nagoya-u.ac.jp}

\keywords{Multilinear pseudo-differential operators, multilinear H\"ormander symbol classes, Besov space}

\thanks{This work was supported by Grant-in-Aid for JSPS KAKENHI Fellows, 
Grant Numbers 23KJ1053.}

\subjclass[2020]{35S05, 42B15, 42B35}

\begin{abstract}
We consider multilinear pseudo-differential operators with symbols in the multilinear H\"ormander class $S_{0, 0}$.
The aim of this paper is to discuss the boundedness of these operators in the settings of Besov spaces. 

\end{abstract}

\maketitle

\section{Introduction}
For a bounded function $\sigma = \sigma(x, \xi_1, \dots, \xi_N)$ on $(\R^n)^{N+1}$, the ($N$-fold) multilinear  pseudo-differential operator is defined by
\[
T_\sigma(f_1, \dots, f_N)(x)
=
\frac{1}{(2\pi)^{Nn}}
\int_{(\R^n)^N}
e^{i x \cdot (\xi_1 + \dots + \xi_N)}
\sigma(x, \xi_1, \dots, \xi_N)
\prod_{j=1}^N
\widehat{f}_j(\xi_j)
\,
d\xi_1
\dots
d\xi_N,
\]
where $x \in \R^n$, $f_j \in \Sh(\R^n)$, $j=1, \dots, N$, and $\widehat{f}_j$ denotes the Fourier transform of $f_j$.

For $m \in \R$, 
the symbol class $S^m_{0, 0}(\R^n, N)$ denotes the set of all $\sigma = \sigma(x, \xi_1, \dots, \xi_N) \in C^{\infty}((\R^n)^{N+1})$ satisfying
\[
|
\pa^\alpha_x
\pa^{\beta_1}_{\xi_1}
\dots
\pa^{\beta_N}_{\xi_N}
\sigma(x, \xi_1, \dots, \xi_N)
|
\le
C_{\alpha, \beta_1, \dots, \beta_N}
(1+|\xi_1|+ \dots + |\xi_N|)^{m}
\]
for all multi-indices $\alpha, \beta_1, \dots, \beta_N \in \N^n_0 = \{0, 1, 2,  \dots\}^n$.

The subject of this paper is to study the boundedness of multilinear pseudo-differential operators. 
We will use the following notations.
Let $X_1, \dots, X_N$, and $Y$ be function spaces on $\R^n$ equipped with quasi-norms  
$\|\cdot\|_{X_j}$ and $\|\cdot\|_{Y}$, respectively.
We say that $T_{\sigma}$ is bounded from $X_1 \times \dots \times X_N$ to $Y$
if there exists a positive constant $C$ such that the inequality 
\begin{equation}\label{bdd-dfn}
\|T_{\sigma}(f_1, \dots, f_N)\|_{Y}
\le
C
\prod_{j=1}^N
\|f_j\|_{X_j}
\end{equation}
holds for all $f_j \in \Sh \cap X_j$, $j=1, \dots, N$. 
The smallest constant $C$ of \eqref{bdd-dfn} is defined by 
$\|T_{\sigma}\|_{X_1 \times \dots \times X_N \to Y}$. 
If $T_{\sigma}$ is bounded from $X_1 \times \dots \times X_N$ to $Y$ 
for all $\sigma \in S^m_{0,0}(\R^n, N)$, then we write
\[
\Op(S^m_{0,0}(\R^n, N)) \subset B(X_1 \times \dots \times X_N \to Y).
\]

In the linear case, the class $S^m_{0,0}(\R^n, 1)$
 is well known as the H\"ormander symbol class of $S_{0,0}$, 
and the boundedness of linear pseudo-differential operators with symbols in this class
was well studied.
More precisely, the following is widely known
(for the definition of the function spaces 
$h^p$ and $bmo$,
see Section \ref{preliminaries}).
\begin{thmA}[\cite{CV, CM-Asterisque, Miyachi-MathNachr, PS, KMT-JFA}]
Let 
$0< p \le \infty$.
Then the boundedness
\[
\Op(
S^{m}_{0, 0}
)
\subset
B(h^{p} \to h^{p}).
\]
holds if and only if
\[
m 
\le
-n\left|\frac{1}{p} - \frac{1}{2} \right|,
\]
where $h^{p}$ 
should be replaced by $bmo$ if $p = \infty$ .
\end{thmA}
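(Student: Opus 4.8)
The plan is to separate the two implications, taking the $L^2$ theory of Calderón--Vaillancourt as the anchor and propagating it to all $p$ by interpolation and duality. For the sufficiency direction, note first that the case $p=2$ is exactly the endpoint $m\le 0$, and since $h^2=L^2$ the inclusion $\Op(S^0_{0,0})\subset B(L^2\to L^2)$ is precisely the Calderón--Vaillancourt theorem. The strategy is then to establish the two remaining endpoints, $\Op(S^{-n/2}_{0,0})\subset B(h^1\to h^1)$ and its dual $\Op(S^{-n/2}_{0,0})\subset B(bmo\to bmo)$, and to fill in the rest of the scale.

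For the $h^1$ endpoint I would decompose the symbol dyadically in frequency, $\sigma=\sum_{k\ge 0}\sigma_k$ with $\sigma_k$ localized to $|\xi|\sim 2^k$. Each normalized piece $2^{-km}\sigma_k$ has uniformly bounded $S^0_{0,0}$ seminorms, so Calderón--Vaillancourt gives $\|T_{\sigma_k}\|_{L^2\to L^2}\lesssim 2^{km}$; integrating by parts in $\xi$ (non-stationary phase) further shows that the kernel of $T_{\sigma_k}$ is concentrated within distance $\sim 2^{-k}$ of the diagonal, with rapid decay beyond. Testing $T_\sigma$ on an $h^1$-atom $a$ adapted to a cube of side $2^{-j}$ then splits into two regimes: the high frequencies $k>j$ are summed using the bound $2^{km}$ against the normalization $\|a\|_{L^2}\lesssim 2^{jn/2}$, which with $m=-n/2$ yields the convergent geometric series $\sum_{k>j}2^{(j-k)n/2}$, while the low frequencies $k\le j$ are summed by exploiting the vanishing moment of $a$ together with the kernel regularity at scale $2^{-k}$. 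Realizing $T_\sigma a$ as a fixed multiple of an $h^1$-molecule gives the endpoint, and it is exactly $m=-n/2$ that balances the two sums. With both endpoints in hand, complex interpolation (Stein's analytic interpolation applied to the analytic family obtained by letting the symbol order vary with the parameter) yields the sharp $m=-n(1/p-1/2)$ for $1\le p\le 2$; duality, using that the formal adjoint of an $S^m_{0,0}$ operator again lies in $S^m_{0,0}$ and that $(h^1)^*=bmo$, covers $2\le p\le\infty$; and for $0<p<1$ the same atom-based argument run on the $h^p$ atomic decomposition uses the larger loss $m\le -n(1/p-1/2)$ to compensate the more singular normalization and the higher-order moment conditions.

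For necessity, a single family of Fourier multipliers already suffices. Fix a cutoff $\varphi$ supported in $|\xi|\sim 1$ and set, for $k\ge 1$, the chirp (truncated Fresnel) symbol $\sigma^{(k)}(\xi)=2^{km}\,e^{i|\xi|^2/2^{k+1}}\,\varphi(2^{-k}\xi)$. A direct check shows $\sigma^{(k)}\in S^m_{0,0}$ with seminorms bounded uniformly in $k$, since on $|\xi|\sim 2^k$ every $\xi$-derivative of the quadratic phase is $O(1)$. The associated kernel is an oscillatory integral whose stationary-phase analysis gives $|K_k(x)|\sim 2^{kn/2}$ for $|x|\lesssim 1$ with rapid decay otherwise, so that the multiplier norm satisfies $\|T_{\sigma^{(k)}}\|_{h^p\to h^p}\gtrsim 2^{km}\,2^{kn|1/p-1/2|}$. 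If the inclusion $\Op(S^m_{0,0})\subset B(h^p\to h^p)$ held, the closed graph theorem would bound these operator norms by finitely many symbol seminorms, hence uniformly in $k$; letting $k\to\infty$ then forces $m\le -n|1/p-1/2|$. It is the genuine $\xi$-oscillation permitted by $S_{0,0}$ (no decay under differentiation) that makes these symbols admissible and the loss strictly larger than in the $S_{1,0}$ theory.

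The step I expect to be the main obstacle is the endpoint estimate itself, namely $h^1\to h^1$ at $m=-n/2$ (and the $h^p$, $p<1$, analogues). Unlike Calderón--Zygmund operators, members of $S^0_{0,0}$ have kernels with no smoothing and hence no genuine off-diagonal gain, so the naive scheme ``image of an atom is a molecule'' does not close by pointwise kernel bounds alone. Controlling the aggregate of the high-frequency pieces $\sum_{k>j}T_{\sigma_k}a$ instead requires almost-orthogonality across scales, via a Cotlar--Stein or $TT^*$ argument, and it is the interplay of this orthogonality with the atomic cancellation that pins the borderline order to exactly $m=-n/2$ and accounts for the sharpness of the threshold $-n|1/p-1/2|$.
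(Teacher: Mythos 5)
This statement is quoted background (Theorem A is cited to \cite{CV, CM-Asterisque, Miyachi-MathNachr, PS, KMT-JFA}); the paper contains no proof of it, so your attempt can only be judged against the classical arguments. Your overall architecture --- Calder\'on--Vaillancourt at $p=2$, an $h^1$ (resp.\ $bmo$) endpoint at $m=-n/2$, analytic interpolation and duality to fill the scale, and chirp symbols plus the closed graph theorem for necessity --- is the standard one, and the necessity half is essentially right: the family $\sigma^{(k)}(\xi)=2^{km}e^{i|\xi|^2/2^{k+1}}\varphi(2^{-k}\xi)$ lies in $S^m_{0,0}$ uniformly, and testing on a unit-scale bump (for $p\ge2$) or a bump at scale $2^{-k}$ (for $p\le2$) gives the lower bound $2^{k(m+n|1/p-1/2|)}$; this is the same mechanism as the Wainger-type symbols $e^{i|\nu|^a}$ used in Section 4 of this paper and in \cite{KMT-JFA}.

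The genuine gap is in the sufficiency endpoint, and it is located exactly where you claim that the kernel of $T_{\sigma_k}$ ``is concentrated within distance $\sim 2^{-k}$ of the diagonal.'' That is the $S_{1,0}$ behaviour, not the $S_{0,0}$ behaviour: since $\partial_\xi^\beta\sigma_k$ gains no decay in $\xi$, integration by parts over $\{|\xi|\sim 2^k\}$ only yields $|K_k(x,y)|\lesssim 2^{k(m+n)}(1+|x-y|)^{-M}$, i.e.\ concentration at \emph{unit} scale uniformly in $k$ --- which your own closing paragraph correctly observes (``no genuine off-diagonal gain''), contradicting the earlier claim. This matters quantitatively: running your stated low-frequency mechanism (moment of the atom against the kernel's Lipschitz constant $\sim 2^k\cdot 2^{k(m+n)}$) for an atom at scale $2^{-j}$ gives $\sum_{k\le j}2^{k-j}\,2^{k(m+n)}=\sum_{k\le j}2^{k-j}2^{kn/2}\approx 2^{jn/2}$ at $m=-n/2$, which diverges as $j\to\infty$. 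The sum only closes if the moment condition is used on the Fourier side --- $\widehat a(\xi)=O(2^{-j}|\xi|)$ forces $\|\widetilde\psi_k(D)a\|_{L^2}\lesssim 2^{k-j}2^{kn/2}$, after which Calder\'on--Vaillancourt applied to each frequency-localized piece gives $\sum_{k\le j}2^{km}\,2^{k-j}2^{kn/2}=\sum_{k\le j}2^{k-j}\lesssim1$ --- a different mechanism from the one you wrote. Together with the fact that the $p<1$ case (higher moments, more singular normalization) is only gestured at, the sufficiency half remains a plan rather than a proof; you flag this honestly, but the flagged obstacle is precisely the content of \cite{Miyachi-MathNachr} and \cite{PS}.
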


In Theorem A,
the ``if'' part was proved by 
Calder\'on--Vaillancourt \cite{CV} for the case $p=2$,
Coifman--Meyer \cite{CM-Asterisque} for the case $1 < p < \infty$,
and 
Miyachi \cite{Miyachi-MathNachr} and P\"aiv\"arinta--Somersalo \cite{PS} for the case $0 < p \le \infty$.
The proof of ``only if'' part can be found in \cite{KMT-JFA}.

As a generalization of Theorem A, 
the boundedness of linear pseudo-differential operators of $S_{0,0}$
was studied in the settings of  Besov spaces $B^s_{p, q}$
(for the definition of Besov spaces, see Section \ref{preliminaries}).
\begin{thmB}[\cite{Marschall, Sugimoto, Park}]
Let $0< p \le \infty$, $0< q \le \infty$, and $s, t \in \R$.
Let
\[
m
= 
-n\left|\frac{1}{p} - \frac{1}{2} \right|
+s-t.
\]
Then
\[
\Op(S^{m}_{0, 0})
\subset
B(B^s_{p, q} \to B^{t}_{p, q}).
\]
\end{thmB}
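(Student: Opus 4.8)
The plan is to reduce to the diagonal case $s=t$ by a Bessel-potential factorization, and then to run a Littlewood--Paley argument in which the diagonal frequency interactions are controlled by Theorem A and the off-diagonal ones decay rapidly. Writing $J^{\rho}=(1-\Delta)^{\rho/2}$, I would first note that $(1+|\xi|^2)^{-(s-t)/2}\in S^{-(s-t)}_{0,0}$, so that for $\sigma\in S^{m}_{0,0}$ the symbol $\tau(x,\xi):=\sigma(x,\xi)(1+|\xi|^2)^{-(s-t)/2}$ lies in $S^{m-(s-t)}_{0,0}=S^{m_0}_{0,0}$ with $m_0=-n|1/p-1/2|$, the product rule for $S_{0,0}$ symbols being elementary. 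Since multiplication of the symbol by a function of $\xi$ corresponds to composition with the associated Fourier multiplier on the right, one has the exact identity $T_\sigma f=T_\tau(J^{s-t}f)$. As $J^{s-t}\colon B^s_{p,q}\to B^t_{p,q}$ is an isomorphism (the lifting property), the theorem reduces to the single claim
\[
\Op(S^{m_0}_{0,0})\subset B(B^t_{p,q}\to B^t_{p,q}),\qquad m_0=-n\Big|\tfrac1p-\tfrac12\Big|.
\]

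For this I would use the characterization $\|g\|_{B^t_{p,q}}\approx\|\{2^{jt}\|\psi_j(D)g\|_{h^p}\}_{j\ge0}\|_{\ell^q}$ (with $bmo$ and $L^\infty$ replacing $h^p$ when $p=\infty$), legitimate because each $\psi_j(D)g$ is band-limited and $\|\cdot\|_{h^p}\approx\|\cdot\|_{L^p}$ on such functions uniformly in $j$. Decomposing $f=\sum_k f_k$ with $f_k=\psi_k(D)f$ and using a fattened projector $\Psi_k=\psi_{k-1}+\psi_k+\psi_{k+1}$ so that $\Psi_k(D)f_k=f_k$, I write $\psi_j(D)T_\tau f_k=c_{jk}(x,D)f_k$, where $c_{jk}$ is the symbol of $\psi_j(D)T_{\tau\Psi_k}$. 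A direct computation gives $c_{jk}(x,\xi)=\Psi_k(\xi)\int_{\R^n}e^{-iz\cdot\xi}K_j(z)\,\tau(x-z,\xi)\,dz$ with $K_j=\mathcal F^{-1}\psi_j$. The goal is to show $c_{jk}\in S^{m_0}_{0,0}$ with every seminorm bounded by $C_{N}2^{-N|j-k|}$ for all $N$; granting the standard fact that the $h^p\to h^p$ (resp. $bmo\to bmo$) operator norm in Theorem A is dominated by finitely many symbol seminorms, this yields $\|c_{jk}(x,D)f_k\|_{h^p}\lesssim_N 2^{-N|j-k|}\|f_k\|_{h^p}$, while the diagonal band $|j-k|\le1$ gives a uniform $O(1)$ bound directly from Theorem A.

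The heart of the matter — and the main obstacle — is producing this off-diagonal decay, since $S_{0,0}$ symbols have no decay in $\xi$ and the operators do not preserve frequency localization in the naive sense; all the gain must be extracted from the $x$-smoothness of $\tau$ together with the support and cancellation of the Littlewood--Paley pieces. On the support of $c_{jk}$ one has $|\xi|\sim2^k$ and, when $|j-k|\ge2$, $|\xi+\theta|\sim2^j$, and I would treat the two regimes separately. When $k>j$, integrating by parts $M$ times in $z$ against the oscillation $e^{-iz\cdot\xi}$ produces a factor $|\xi|^{-M}\sim2^{-kM}$ against $\|\pa_z^\gamma K_j\|_{L^1}\sim2^{j|\gamma|}$, giving $2^{(j-k)M}=2^{-M|j-k|}$, the bounded $z$-derivatives of $\tau(x-z,\xi)$ contributing only factors of $(1+|\xi|)^{m_0}$. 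When $j>k$ (so $j\ge1$), I instead use that $K_j$ has all vanishing moments, writing $c_{jk}=\Psi_k(\xi)\int K_j(z)\big[e^{-iz\cdot\xi}\tau(x-z,\xi)-P_M(z)\big]dz$ for the degree-$M$ Taylor polynomial $P_M$ of $z\mapsto e^{-iz\cdot\xi}\tau(x-z,\xi)$ at $z=0$; the remainder is $O(|z|^M(1+|\xi|)^{M+m_0})$, and $\int|K_j(z)|\,|z|^M\,dz\sim2^{-jM}$, giving $2^{(k-j)M}=2^{-M|j-k|}$. Differentiation in $x$ and $\xi$ preserves these bounds after adjusting $M$, so all seminorms of $c_{jk}$ decay like $2^{-N|j-k|}$.

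Finally I would assemble the estimate. Using the $\min(1,p)$-triangle inequality in $h^p$ with $r=\min(1,p)$,
\[
2^{jt}\|\psi_j(D)T_\tau f\|_{h^p}\lesssim\Big(\sum_{k}\big(2^{(j-k)t}2^{-N|j-k|}\,2^{kt}\|f_k\|_{h^p}\big)^{r}\Big)^{1/r},
\]
and for $N>|t|$ the kernel $2^{(j-k)t-N|j-k|}$ decays exponentially in $|j-k|$. Taking the $\ell^q$ norm in $j$ and bounding the resulting discrete convolution by Young's inequality (when $q\ge r$) or by the subadditivity of $\|\cdot\|_{\ell^{q/r}}^{q/r}$ (when $q<r$) controls it by $\|\{2^{kt}\|f_k\|_{h^p}\}\|_{\ell^q}\approx\|f\|_{B^t_{p,q}}$. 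This proves $T_\tau\colon B^t_{p,q}\to B^t_{p,q}$ and hence, via the reduction above, the theorem.
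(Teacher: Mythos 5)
This statement is quoted in the paper from \cite{Marschall, Sugimoto, Park} and is not proved there, so there is no in-paper argument to compare yours against; I can only assess your proposal on its own terms and against the techniques the paper actually deploys. Your argument is correct in all essentials: the lifting reduction to $s=t$, the identification of the symbol $c_{jk}$ of $\psi_j(D)T_{\tau\Psi_k}$, the extraction of $2^{-N|j-k|}$ decay of all $S^{m_0}_{0,0}$ seminorms (oscillatory integration by parts in $z$ for $k>j$; vanishing moments of $\F^{-1}\psi_j$ plus Taylor expansion for $j>k$), and the final $\ell^q$ convolution estimate after the $\min(1,p)$-triangle inequality are all sound, and the ``standard fact'' that the operator norm in Theorem A is dominated by finitely many symbol seminorms is exactly the closed-graph argument the paper itself invokes at \eqref{operator-norm}. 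This route (frequency-localize, apply the endpoint $h^p$ theorem blockwise, sum by almost orthogonality) is essentially that of Sugimoto and Park, and it is genuinely different from the machinery the paper builds for its multilinear Theorem \ref{main1}, where the symbol is decomposed by Fourier series on unit cubes \`a la Coifman--Meyer and the frequency interactions are controlled through Wiener amalgam embeddings (Lemma \ref{embd}, Proposition \ref{Keyprop}), which is needed because in the multilinear setting the output frequency is a sum of input frequencies and no single Littlewood--Paley index dominates; your linear argument buys simplicity but would not survive that complication. Two small repairs are needed: (i) for $p=\infty$ Theorem A gives $bmo\to bmo$ while the Besov norm uses $L^\infty$, so you should insert a fattened projector and use $\|\widetilde{\psi}_j(D)h\|_{L^\infty}\lesssim\|h\|_{bmo}$ uniformly in $j$; (ii) your one-line justification of the $h^p$-characterization of the Besov norm is not right as stated --- the naive maximal-function comparison of $h^p$ and $L^p$ on band-limited functions degrades with the bandwidth --- but the equivalence you need is precisely Proposition \ref{propQui}, so cite that instead.
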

The above result was given by 
Marschall \cite{Marschall} for the case $p=q=\infty$, 
and by Sugimoto \cite{Sugimoto} for the case $1 \le p, q \le \infty$.
Recently, Theorem B was proved by Park \cite{Park}.

In the multilinear settings, 
the class $S^m_{0,0}(\R^n, N)$ was first studied by B\'enyi-Torres \cite{BT-MRL}
for the case $N=2$, that is, the bilinear case.
The authors proved that, 
for $1 \le p, p_1, p_2 < \infty$ satisfying $1/p = 1/p_1 + 1/p_2$, 
there exist $x$-independent symbols in $S^0_{0,0}(\R^n, 2)$ such that 
the corresponding  bilinear operators are not bounded from $L^{p_1} \times L^{p_2}$ to $L^p$. 
In particular, they pointed out that the class $S^0_{0,0}(\R^n, 2)$ does not assure 
the $L^2 \times L^2 \to L^1$ boundedness in contrast to the Calder\'on-Vaillancourt theorem for linear pseudo-differential operators.
Then, the number $m$ which assures the $L^{p_1} \times \dots \times L^{p_N} \to L^p$ boundedness of these operators 
was investigated by
B\'enyi-Bernicot-Maldonado-Naibo-Torres \cite{BBMNT}
and
Michalowski-Rule-Staubach \cite{MRS},
and after that,
a complete description of $m$ was given by Miyachi-Tomita \cite{MT-IUMJ}  for $N = 2$ 
(for the case $1/p = 1/p_1+1/p_2$)
and Kato-Miyachi-Tomita\cite{KMT-JFA} for $N \ge 2$.

\begin{thmC}[\cite{MT-IUMJ, KMT-JFA}]
Let $N \ge 2$, $0< p, p_1, \dots, p_N \le \infty$, $1/p \le 1/p_1 + \dots + 1/p_N$ 
and $m \in \R$. 
Then the boundedness
\[
\Op(S^m_{0, 0}(\R^n, N))
\subset
B(h^{p_1} \times \dots \times h^{p_N} \to h^p)
\]
holds if and only if
\[
m 
\le
\min
\left\{
\frac{n}{p},  
\frac{n}{2}
\right\}
-
\sum_{j=1}^N
\max
\left\{
\frac{n}{p_j}, 
\frac{n}{2}
\right\}.
\]
where $h^{p_j}$ can be replaced by $bmo$ if $p_j= \infty$ for some $j=1, \dots, N$.
\end{thmC}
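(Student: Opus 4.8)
Since the statement is an equivalence, I would establish the sufficiency and the necessity separately. Write $m_c=\min\{n/p,\,n/2\}-\sum_{j=1}^{N}\max\{n/p_j,\,n/2\}$ for the claimed critical order. Because $\langle\xi\rangle\ge 1$ gives the nesting $S^{m}_{0,0}(\R^n,N)\subset S^{m_c}_{0,0}(\R^n,N)$ whenever $m\le m_c$, the sufficiency reduces at once to the single endpoint order $m=m_c$; dually, the necessity means producing, for every $m>m_c$, a symbol in $S^{m}_{0,0}(\R^n,N)$ whose operator violates \eqref{bdd-dfn}.

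For the sufficiency I would first cut down the range of the exponents by two standard devices. First, duality: pairing $T_\sigma(f_1,\dots,f_N)$ against a function in the dual of the target turns the estimate into a bound for the $(N+1)$-linear form $\Lambda(f_1,\dots,f_{N+1})=\int_{\R^n} T_\sigma(f_1,\dots,f_N)\,f_{N+1}\,dx$, whose associated symbol depends only on $\xi_1+\dots+\xi_{N+1}$ and $\xi_1,\dots,\xi_N$ and, thanks to the rapid decay of the $x$-Fourier transform of $\sigma$, again lies in the class after permuting the roles of the $N+1$ entries; this lets me normalize which slot is the output. Second, interpolation: the function $m_c$ is piecewise affine in $(1/p_1,\dots,1/p_N,1/p)$ with kinks precisely at $p_j=2$ and $p=2$, so I would subdivide the admissible region $1/p\le\sum_j 1/p_j$ into the finitely many maximal cells on which $m_c$ is affine and apply multilinear complex (Stein) interpolation of an analytic family of symbols $\sigma_z$ of varying order within each cell. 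On a cell the affine interpolant of the vertex orders equals $m_c$, so nothing is lost, and the problem collapses to finitely many vertex estimates, the decisive ones being those in which each $p_j\in\{2,\infty\}$ and the output is $h^p$ with $p\le 2$ or $bmo$.

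The core of the matter, and the step I expect to be hardest, is that an $S_{0,0}$ symbol gains nothing when differentiated in $\xi$, so integration by parts and Calder\'on--Zygmund kernel estimates are unavailable and must be replaced by almost-orthogonality. I would apply a Littlewood--Paley partition in $(\xi_1,\dots,\xi_N)$ into dyadic blocks indexed by $\nu=(\nu_1,\dots,\nu_N)$ and, on each block, a phase-space (Gabor/wave-packet) expansion of the remaining dependence: since all $x$- and $\xi$-derivatives of $\sigma$ are bounded, over each unit frequency cube $\sigma$ develops into a rapidly converging Fourier series, rewriting $T_\sigma$ as a superposition of elementary operators built from modulations and frequency projections. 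Each elementary piece is controlled in $L^2$, and the real difficulty is to sum over $\nu$: the decay supplied by $m=m_c$ must exactly absorb the growth from the number of overlapping wave packets and from the mismatch between the output frequency $\xi_1+\dots+\xi_N$ and the input scales $2^{\nu_j}$. The term $\max\{n/p_j,n/2\}$ is precisely the cost of this summation in the $j$-th input---$L^2$-orthogonality or an $L^{p_j}$ embedding prevailing according as $p_j\gtrless 2$---while $\min\{n/p,n/2\}$ records the dual gain on the output, and checking that these balance exactly at $m_c$ is the delicate bookkeeping.

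For the necessity I would test \eqref{bdd-dfn} against explicit, $x$-independent symbols (so that $T_\sigma$ is a multiplier) using inputs frequency-localized near a large parameter. Choosing $\sigma$ to be $\langle(\xi_1,\dots,\xi_N)\rangle^{m}$ times a suitable cutoff, or an oscillatory factor in the borderline configurations, and inserting modulated Schwartz bumps for the $f_j$, one computes both sides of \eqref{bdd-dfn} directly; finiteness of $\|T_\sigma\|_{h^{p_1}\times\dots\times h^{p_N}\to h^p}$ then forces $m\le m_c$. I would use one family of examples, concentrating the mass so that the most demanding input space is exposed, to extract the loss $\sum_j\max\{n/p_j,n/2\}$, and a second, dual family to extract the output gain $\min\{n/p,n/2\}$; together they show sharpness and complete the equivalence.
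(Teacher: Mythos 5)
First, note that the paper does not prove Theorem C: it is quoted from \cite{MT-IUMJ, KMT-JFA}, and only the machinery behind its sufficiency half resurfaces in Sections 2--3 (the Coifman--Meyer frequency-uniform decomposition and Fourier series expansion, the weighted Wiener amalgam embeddings of Lemma \ref{embd}, and the counting estimate of Proposition \ref{Keyprop}). Your description of the core almost-orthogonality step is consistent with that method, but two of your reductions contain genuine gaps.

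The serious one is the claim that duality plus complex interpolation collapses the sufficiency to finitely many vertex estimates with each $p_j\in\{2,\infty\}$. The admissible region is unbounded in the variables $1/p_j$ (the theorem covers all $0<p_j\le\infty$, hence $1/p_j$ ranges up to $+\infty$), and the cell on which the critical order is affine for $p_j\le 2$ is the unbounded set $\{1/p_j\ge 1/2\}$; no finite collection of estimates at $p_j\in\{2,\infty\}$ can be interpolated to reach $h^{p_j}$ with $p_j<2$, which is precisely where $\max\{n/p_j,n/2\}=n/p_j$ carries new information. Likewise the duality normalization is unavailable outside the Banach range: the dual of $h^p$ for $p<1$ is not a local Hardy space, and the target is quasi-Banach; the paper itself invokes the transpose $\sigma^{*1}$ only under $1<p_j<\infty$, $2<p<\infty$ (Case I of Lemma \ref{lemNecessity2}). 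In the actual proofs the full range is treated directly: the exponents $\max\{n/p_j,n/2\}$ and $\min\{n/p,n/2\}$ are produced by the embeddings $h^{p_j}\hookrightarrow W^{p_j,2}_{\beta(p_j)}$ and $W^{p,2}_{\alpha(p)}\hookrightarrow h^p$ combined with an $\ell^2\times\ell^1\times\dots\to\ell^2$ convolution and cardinality bound as in Proposition \ref{Keyprop}; your sketch defers exactly this bookkeeping, which is the substantive content of the theorem. On the necessity side, modulated Schwartz bumps alone detect only the $n/p_j$ scaling; reaching the $n/2$ thresholds requires the Wainger-type series $\sum e^{-\epsilon|\nu|}|\nu|^{-b}e^{i|\nu|^a}e^{i\nu\cdot x}\varphi(x)$ with $a\to0$ or $a\to1$ according to the position of $p_j$ relative to $2$, together with a Rademacher--Khintchine randomization of the symbol coefficients, as carried out in Section 4 of the paper; your ``oscillatory factor in the borderline configurations'' points in this direction but supplies no construction.
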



In the scale of Besov spaces, some partial boundedness results for mulitilinear pseudo-differential operators with symbols in $S^m_{0,0}(\R^n, N)$ were given
for the bilinear case.
In Hamada-Shida-Tomita \cite{HST}, it was proved that 
all bilinear pseudo-differential operators with symbols in $S^{-n/2}_{0,0}(\R^n, 2)$
is bounded from $L^2 \times L^2$ to $B^0_{p, q}$ if and only if $1 \le p \le 2$ and $1 \le q \le \infty$.
Since $B^0_{1, 1} \hookrightarrow h^1$, this result improves the $L^2 \times L^2 \to h^1$ boundedness 
given by \cite{MT-IUMJ}.
In \cite{Shida-PAMS}, the following is given.
\begin{thmD}[\cite{Shida-PAMS}]
Let $1 \le p \le 2 \le p_1, p_2 \le \infty$ be such that $1/p \le 1/p_1 + 1/p_2$,
and let $0 < q_1, q_2, q \le \infty$ be such that $1/q \le 1/q_1 + 1/q_2$, 
and let $s_1, s_2, s \in \R$ be such that $s_1+s_2=s$.
If $s_1$, $s_2$ and $s$ satisfy
\begin{equation} \label{condi-PAMS}
s_1 < \frac{n}{2},
\quad
s_2 < \frac{n}{2},
\quad
\text{and}
\quad
s > - \frac{n}{2},
\end{equation}
then
\begin{align*}
\Op(S^{-n/2}_{0, 0}(\R^n, 2))
\subset
B(B^{s_1}_{p_1, q_1} \times B^{s_2}_{p_2, q_2} \to B^{s}_{p, q}).
\end{align*}
\end{thmD}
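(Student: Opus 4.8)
The plan is to run a Littlewood--Paley / paraproduct decomposition, reduce the Besov estimate to frequency-localized bilinear building blocks that refine Theorem~C, and read off the three conditions in \eqref{condi-PAMS} from the convergence of the resulting one-sided geometric sums.

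Fix a Littlewood--Paley partition of unity $\{\vphi_j\}_{j\ge 0}$ and set $\Delta_j=\vphi_j(D)$, so that $\|g\|_{B^s_{p,q}}\approx\big\|\big(2^{js}\|\Delta_j g\|_{L^p}\big)_{j\ge0}\big\|_{\ell^q}$. Decomposing $f_i=\sum_{j_i}\Delta_{j_i}f_i$ and localizing the output by $\Delta_k$, I must control
\[
\Delta_k T_\sigma(f_1,f_2)=\sum_{j_1,j_2\ge0}\Delta_k T_\sigma(\Delta_{j_1}f_1,\Delta_{j_2}f_2)
\]
in $\ell^q_k\big(2^{ks}L^p\big)$. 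The structural fact I would exploit is that, although a symbol in $S^{-n/2}_{0,0}(\R^n,2)$ has no decay in $x$, all of its $x$-derivatives are bounded. Writing $\Delta_k T_\sigma(\Delta_{j_1}f_1,\Delta_{j_2}f_2)=T_{\Sigma_k}(f_1,f_2)$ with $\Sigma=\sigma\,\vphi_{j_1}(\xi_1)\vphi_{j_2}(\xi_2)$ and $\Sigma_k=\vphi_k(\xi_1+\xi_2+D_x)\Sigma$, and trading $2M$ copies of $\pa_x$ against the factor $|\xi_1+\xi_2+D_x|^{-2M}\approx 2^{-2Mk}$ on the relevant range, one gains $2^{-2M(k-\max(j_1,j_2))_+}$ for every $M$. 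This confines the double sum to $k\lesssim\max(j_1,j_2)$ and splits it into the standard high--high ($j_1\approx j_2\ge k$), high--low ($j_1\approx k\ge j_2$) and low--high regimes.

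The heart of the matter is the refined building block
\[
\big\|\Delta_k T_\sigma(\Delta_{j_1}f_1,\Delta_{j_2}f_2)\big\|_{L^p}
\lesssim
2^{-\frac{n}{2}\max(j_1,j_2)}\,
\|\Delta_{j_1}f_1\|_{L^{p_1}}\,\|\Delta_{j_2}f_2\|_{L^{p_2}},
\]
valid for $k\lesssim\max(j_1,j_2)$ with constants depending only on finitely many seminorms of $\sigma$ (and multiplied by the rapid factor above when $k$ is larger). Since $2\le p_1,p_2\le\infty$ and $1\le p\le 2$, the order $m=-n/2$ is exactly the endpoint permitted by Theorem~C, so this estimate sits at the same endpoint and refines it at the level of single frequency blocks. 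I would prove it by multilinear interpolation between corner estimates: at $L^\infty\times L^\infty\to L^\infty$ the kernel of the localized piece obeys $\|K^{j_1,j_2}(x,\cdot)\|_{L^1}\lesssim 2^{-\frac n2\max(j_1,j_2)}$, the corners $L^2\times L^\infty\to L^2$ and $L^\infty\times L^2\to L^2$ follow from Plancherel and the Calder\'on--Vaillancourt bound, and the delicate $L^2\times L^2\to L^1$ corner is handled by the kernel analysis of Kato--Miyachi--Tomita \cite{KMT-JFA}, now carried out for a single frequency block so as to expose the gain $2^{-\frac n2\max(j_1,j_2)}$.

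Finally I would insert the building block and sum. Writing $\|\Delta_{j_i}f_i\|_{L^{p_i}}=2^{-j_is_i}c^{(i)}_{j_i}$ with $(c^{(i)}_j)_j\in\ell^{q_i}$ and using $s_1+s_2=s$, the high--high regime contributes, for fixed $k$, a sum $\sum_{j\ge k}2^{-(j-k)(s+n/2)}\,2^{-kn/2}\,c^{(1)}_j c^{(2)}_j$, whose one-sided geometric kernel $2^{-(j-k)(s+n/2)}$ is summable exactly when $s>-\tfrac n2$; Young's inequality together with $\ell^{q_0}\hookrightarrow\ell^q$ ($1/q_0=1/q_1+1/q_2$) then closes it. The high--low regime reduces, after $2^{ks}2^{-ks_1}=2^{ks_2}$, to the sequence operator with kernel $2^{(k-j_2)s_2-kn/2}\mathbf 1_{j_2\le k}$ applied to $(c^{(2)}_{j_2})$; a Schur test shows it is bounded on $\ell^{q_2}$ exactly when $s_2<\tfrac n2$, and H\"older with $1/q\le1/q_1+1/q_2$ pairs it with $(c^{(1)}_k)\in\ell^{q_1}$, giving $s_2<\tfrac n2$; the low--high regime gives $s_1<\tfrac n2$ symmetrically. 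The cases $q<1$ use the $q$-triangle inequality, and the low-frequency terms are elementary. The main obstacle is the refined building block at the Hardy-endpoint corner $L^2\times L^2\to L^1$: Theorem~C is lossy off the diagonal, so its hardest endpoint must be reproved with the frequency-localization gain made explicit, while the $x$-dependence of the $S_{0,0}$ symbol is absorbed throughout via its $C^\infty_b$ smoothness.
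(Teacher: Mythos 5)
Your architecture (Littlewood--Paley decomposition, a frequency-localized building block, three paraproduct regimes whose one-sided geometric sums produce the conditions \eqref{condi-PAMS}) matches the paper's proof of Theorem \ref{main1}, which contains Theorem D as the special case $N=2$, $1\le p\le 2\le p_1,p_2$. But your central lemma, the ``refined building block''
\[
\big\|\Delta_k T_\sigma(\Delta_{j_1}f_1,\Delta_{j_2}f_2)\big\|_{L^p}
\lesssim
2^{-\frac n2\max(j_1,j_2)}\|\Delta_{j_1}f_1\|_{L^{p_1}}\|\Delta_{j_2}f_2\|_{L^{p_2}},
\]
is false, and the gap is not repairable by the interpolation scheme you propose. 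For $\sigma\in S^{-n/2}_{0,0}$ the factor $2^{-\frac n2\max(j_1,j_2)}$ is exactly the pointwise size of the symbol on the block, so your claim amounts to saying that the block operator with an $S^0_{0,0}$-normalized symbol is bounded $L^2\times L^2\to L^1$ (say) with constant $O(1)$. It is not: take $\widehat{f_i}=\sum_{|\nu|\approx 2^j}\widetilde{\vphi}(\xi-\nu)$ and $\sigma=2^{-jn/2}\sum_{\nu_1+\nu_2=\tau}r_\tau(\omega)\vphi(\xi_1-\nu_1)\vphi(\xi_2-\nu_2)$ with the output $\tau$ restricted to $|\tau|\approx 2^k$. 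Khintchine's inequality gives, for some $\omega$, a lower bound $\gtrsim 2^{-jn/2}\bigl(\sum_{|\tau|\approx 2^k}(\#\{\nu_1+\nu_2=\tau\})^2\bigr)^{1/2}\approx 2^{-jn/2}2^{kn/2}2^{jn}$, while $2^{-jn/2}\|f_1\|_{L^2}\|f_2\|_{L^2}\approx 2^{-jn/2}2^{jn}$; the ratio $2^{kn/2}$ is unbounded for $k\le j$. (This is essentially the example the paper itself uses in Lemma \ref{lemNecessity2} to prove sharpness.) The correct block estimate — the paper's \eqref{Hulk}, obtained from the lattice-point counting in Proposition \ref{Keyprop} together with the Wiener-amalgam embeddings $h^{p}\hookrightarrow W^{p,2}_{\beta(p)}$ and $W^{p,2}_{\alpha(p)}\hookrightarrow h^p$ — carries the extra factor $\min\{2^{j_2 n/2},2^{kn/2}\}$ (second-highest input frequency against output frequency), and it is precisely this factor, combined with $2^{ks}$ and $2^{-j_is_i}$, that makes the three sums borderline exactly at $s=-n/2$, $s_1=n/2$, $s_2=n/2$. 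Your corner estimates fail for the same reason: the $L^\infty\times L^\infty\to L^\infty$ kernel bound $\|K^{j_1,j_2}(x,\cdot)\|_{L^1}\lesssim 2^{-\frac n2\max(j_1,j_2)}$ is off by the volume factor $2^{(j_1+j_2)n/2}$ of the frequency support (consistent with Theorem C, which requires $m\le -n$, not $-n/2$, for $L^\infty\times L^\infty\to L^\infty$), and the $L^2\times L^2\to L^1$ corner with output localization genuinely costs $2^{kn/2}$.

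It so happens that in the range $1\le p\le 2\le p_1,p_2$ your (too strong) block bound and the correct one lead to the same summability conditions, so your final bookkeeping reproduces \eqref{condi-PAMS}; but the proof as written rests on a false inequality and on corner estimates that cannot be interpolated into it. To close the gap you must replace the building block by the correct one, i.e., quantify the almost-orthogonality over $\{\nu_1+\nu_2=\tau\}$ as in Proposition \ref{Keyprop}, which is the actual content of the paper's argument (and of \cite{KMT-JFA}), rather than trying to extract a uniform $O(1)$ bound for normalized blocks.
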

\noindent
In \cite{Shida-PAMS},
the sharpness of the condition \eqref{condi-PAMS} is also considered. 

The purpose of this paper is  
to extend the partial results on the bilinear case stated in Theorem D to the multilinear case in the full range $0< p, p_1, \dots, p_N \le \infty$.
Our main result reads as follows.

\begin{thm}\label{main1}
Let $N \ge 2$, $0< p, p_1, \dots, p_N \le \infty$, $1/p \le 1/p_1 + \dots + 1/p_N$,
$0 < q, q_1, \dots, q_N \le \infty$, $1/q \le 1/q_1 + \dots + 1/q_N$,
and $s, s_1, \dots, s_N \in \R$.
Let 
\begin{equation}\label{criticalorder}
m
=
\min
\left\{
\frac{n}{p},  
\frac{n}{2}
\right\}
-
\sum_{j=1}^N
\max
\left\{
\frac{n}{p_j}, 
\frac{n}{2}
\right\}
+
\sum_{j=1}^N
s_j
-s.
\end{equation}
If $s_1$, \dots, $s_N$ and $s$ satisfy
\begin{align}\label{sassum}
\begin{split}
&s_j 
< 
\max \left\{ \frac{n}{p_j}, \frac{n}{2} \right\},
\quad
j=1, \dots, N,
\quad
\text{and}
\quad
s 
> 
-\max \left\{ \frac{n}{p^{\prime}}, \frac{n}{2} \right\},
\end{split}
\end{align}
then 
\begin{equation}\label{boundedness_1}
\Op(S^{m}_{0,0}(\R^n, N))
\subset
B(B^{s_1}_{p_1, q_1} \times \dots \times B^{s_N}_{p_N, q_N} \to B^s_{p, q}).
\end{equation}
\end{thm}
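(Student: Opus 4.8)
The plan is to run a Littlewood--Paley analysis and reduce the Besov estimate to the $h^p$ bound of Theorem C applied scale by scale, the gain coming entirely from the frequency gaps. Fix a Littlewood--Paley family $\{\vphi_k\}_{k\ge0}$ and write $\Delta_k=\vphi_k(D)$, so that $\|g\|_{B^s_{p,q}}\approx\|\{2^{ks}\|\Delta_kg\|_{h^p}\}_k\|_{\ell^q}$. Expanding each input gives $T_\sigma(f_1,\dots,f_N)=\sum_{\nu_1,\dots,\nu_N\ge0}T_\sigma(\Delta_{\nu_1}f_1,\dots,\Delta_{\nu_N}f_N)$, and I then apply $\Delta_k$ to the output. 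Since $\sigma\in S^m_{0,0}$ has all $x$-derivatives bounded by $C_\alpha(1+|\xi|)^m$, its $x$-Fourier transform decays faster than any power, so $\Delta_kT_\sigma$ coincides, up to an error that is rapidly decaying in $k$, with the operator whose symbol is $\sigma(x,\xi)\,\vphi_k(\xi_1+\dots+\xi_N)\prod_j\widetilde\vphi_{\nu_j}(\xi_j)$; this is supported in $|\xi_j|\sim2^{\nu_j}$ and $|\xi_1+\dots+\xi_N|\sim2^k$, and in particular forces $k\lesssim\mu:=\max_j\nu_j$.

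The technical heart is a single-block estimate: for fixed $(k,\nu_1,\dots,\nu_N)$ with $k\le\mu$ one aims to establish
\[
\|\Delta_kT_\sigma(\Delta_{\nu_1}f_1,\dots,\Delta_{\nu_N}f_N)\|_{h^p}
\lesssim B(k,\nu)\prod_{j=1}^N\|\Delta_{\nu_j}f_j\|_{h^{p_j}},
\]
\[
B(k,\nu)=2^{\mu m}\,2^{-k\min\{n/p,n/2\}}\,2^{-(\mu-k)n}\prod_{j=1}^N2^{\nu_j\max\{n/p_j,n/2\}}.
\]
The symbol weight on the relevant support is $2^{\mu m}$. Writing $m_C=\min\{n/p,n/2\}-\sum_j\max\{n/p_j,n/2\}$ for the critical order of Theorem C, the scale-localized symbol lies in $S^{m_C}_{0,0}$ with seminorms $O(2^{\mu(m-m_C)})$, so Theorem C already supplies the baseline estimate in which all inputs and the output sit at the common top scale $2^\mu$. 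The content of $B(k,\nu)$ is the gain over this baseline: the factors $2^{\nu_j\max\{n/p_j,n/2\}}$, for the inputs lying below the top scale, come from Bernstein inequalities, while the factor $2^{-(\mu-k)n}$ records the shrinking of the output frequency support and, after reassembly with the top-scale factors and the Besov weights, produces the net output gain $2^{-(\mu-k)\max\{n/p',n/2\}}$ (using the identity $\min\{n/p,n/2\}+\max\{n/p',n/2\}=n$). This last gain, relevant precisely when the output frequency falls well below the top input frequency, is extracted by a duality argument, transposing $T_\sigma$ in one input--output pair and using that the transposed symbol is again of class $S_{0,0}$.

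The reason for the shape of $B(k,\nu)$ is that, inserting the Besov weights and using \eqref{criticalorder}, a direct computation collapses everything to
\[
2^{ks}B(k,\nu)\prod_{j=1}^N2^{-\nu_js_j}
=2^{-(\mu-k)(s+\max\{n/p',n/2\})}\prod_{j:\,\nu_j<\mu}2^{(\mu-\nu_j)(s_j-\max\{n/p_j,n/2\})}.
\]
By \eqref{sassum} every exponent here is strictly negative: $s+\max\{n/p',n/2\}>0$ controls the output gap $\mu-k\ge0$, and each $s_j-\max\{n/p_j,n/2\}<0$ controls the input gaps $\mu-\nu_j\ge0$. Thus the weighted block constants decay geometrically in all frequency gaps, and summing over $\nu_1,\dots,\nu_N$ and taking the $\ell^q$ norm in $k$ reduces to a discrete convolution/Young estimate in the scale variables, bounded using $1/q\le1/q_1+\dots+1/q_N$ via H\"older for sequence spaces, which yields $\prod_j\|f_j\|_{B^{s_j}_{p_j,q_j}}$. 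The main obstacle is the block estimate itself, and within it the output-localization gain in the ``high--high-to-low'' regime (two or more inputs at the top scale with the output far below): this is invisible to a direct application of Theorem C and must be produced by the duality/transpose argument, which is also exactly where the borderline hypothesis $s>-\max\{n/p',n/2\}$ is consumed, just as $s_j<\max\{n/p_j,n/2\}$ is consumed in the Bernstein step. Remaining technical points are the passage between $h^p$ and $L^p$ for $p\le1$ and the $bmo$ replacements when some exponent equals $\infty$.
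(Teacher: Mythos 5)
Your global architecture matches the paper's: dyadic decomposition of the inputs and the output, a single-block estimate whose constant decays geometrically in every frequency gap, with $s_j<\max\{n/p_j,n/2\}$ paying for the low inputs and $s>-\max\{n/p',n/2\}$ paying for a low output; the exponent bookkeeping in your identity for $2^{ks}B(k,\nu)\prod_j2^{-\nu_j s_j}$ is correct, and the summation step via H\"older/Young in the scale variables is routine. The genuine gap is in the derivation of the block estimate itself, which you correctly identify as the technical heart but do not actually prove. The factor $2^{\nu_j\max\{n/p_j,n/2\}}$ (in place of the baseline $2^{\mu\max\{n/p_j,n/2\}}$) for an input sitting at scale $2^{\nu_j}\ll 2^{\mu}$ is not obtainable from Theorem C plus Bernstein. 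Test case: $N=2$, $p=p_1=p_2=2$, $\nu_1=\mu$, $\nu_2=0$, $k\approx\mu$. Your $B(k,\nu)$ predicts a gain of $2^{-\mu n/2}$ over the flat Theorem C bound; but $\max\{n/p_2,n/2\}=n/2$, the norms $\|\Delta_0 f_2\|_{L^r}$ are comparable for all $r$, and the $L^2\times L^r\to L^2$ critical order in Theorem C equals $-n/2$ for every $r\ge 2$, so black-boxing Theorem C with any admissible exponent in the second slot returns a constant $O(1)$, not $O(2^{-\mu n/2})$. The same deficit of $2^{(\mu-\nu_j)n/2}$ per low input persists for $p_j<2$ after Bernstein. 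The true source of the factor is almost orthogonality at the \emph{unit} frequency scale: $\Delta_{\nu_j}f_j$ occupies only about $2^{\nu_j n}$ unit cubes, and Cauchy--Schwarz over those cubes combined with the embedding $h^{p_j}\hookrightarrow W^{p_j,2}_{\beta(p_j)}$ yields $2^{\nu_j n/2}\cdot 2^{-\nu_j\beta(p_j)}=2^{\nu_j\max\{n/p_j,n/2\}}$. This is exactly what the paper constructs through the Coifman--Meyer unit-cube expansion of the symbol, the Wiener amalgam embeddings of Lemma \ref{embd}, and Proposition \ref{Keyprop}; none of it is visible from the statement of Theorem C.

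The output gain in the high--high-to-low regime has the same character: the paper obtains it by counting the occupied output cubes (the factor $\min\{R_2^{n/2},R^{n/2}\}$ in Proposition \ref{Keyprop}), not by duality. Your transposition route does not repair this: dualizing converts the low output of $T_\sigma$ into a low \emph{input} of $T_{\sigma^{*1}}$, i.e.\ it reduces to precisely the input gain that is missing, and in addition the duality pairing is unavailable in the required form in the quasi-Banach ranges $p\le 1$, $p_j\le 1$ (and for $q\le1$), which you defer as a ``technical point'' but which is not merely technical here. A smaller imprecision: the reduction of $\Delta_k T_\sigma$ to the symbol cut off at $|\xi_1+\dots+\xi_N|\approx 2^k$ ``up to a rapidly decaying error'' needs the separate decomposition in the $x$-frequency of the symbol (the paper's $\ell_0$ parameter and Lemma \ref{EST-x}), since the symbol's $x$-oscillation can push the output frequency above $\max_j\nu_j$; this is handled by the region $D_1$ in the paper. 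In summary, the skeleton and the role of hypothesis \eqref{sassum} are right, but the central estimate is asserted with tools that cannot deliver it.
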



The condition  \eqref{sassum} is sharp in the following sense.
\begin{thm}\label{thmnec}
Let $0 < p, p_1, \dots, p_N \le \infty$, $0< q, q_1, \dots, q_N \le \infty$ and $s, s_1, \dots, s_N \in \R$.
If the boundedness \eqref{boundedness_1} holds with $m$ given in \eqref{criticalorder}, then
$s_j \le \max \{n/p_j, n/2\}$, $j= 1, \dots, N$,
and
$s \ge -\max \{n/p^{\prime}, n/2\}$.
\end{thm}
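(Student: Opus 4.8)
The plan is to prove necessity by exhibiting, for each violated inequality, a specific symbol $\sigma \in S^m_{0,0}(\R^n, N)$ together with test functions $f_1, \dots, f_N$ for which the boundedness \eqref{boundedness_1} fails. By symmetry among the indices $j$, it suffices to treat the failure of $s_1 \le \max\{n/p_1, n/2\}$ (the constraint on the target, $s \ge -\max\{n/p', n/2\}$, being handled by a dual construction), so I would first reduce to showing that $s_1 > \max\{n/p_1, n/2\}$ forces unboundedness.

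The natural strategy is a concentration/localization argument on frequency annuli. First I would construct the symbol as a product of smooth Littlewood--Paley-type cutoffs: take $\sigma(x, \xi_1, \dots, \xi_N) = \LLambda(\xi_1 + \dots + \xi_N)^{m} \prod_{j=1}^N \psi_j(\xi_j)$ or, more usefully, a symbol that is independent of $x$ and localizes each $\xi_j$ to a dyadic shell $|\xi_j| \sim 2^{k}$ while forcing $|\xi_1 + \dots + \xi_N|$ to remain comparable to $2^{k}$; one must verify the $S^m_{0,0}$ estimate, which for such homogeneous-type cutoffs reduces to the Leibniz rule and the scaling $(1 + |\xi_1| + \dots + |\xi_N|)^m \sim 2^{km}$. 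Second, I would choose $f_j$ to be frequency-localized wave packets adapted to the same shell $|\xi_j| \sim 2^{k}$, so that $\|f_j\|_{B^{s_j}_{p_j, q_j}} \sim 2^{k s_j} \|f_j\|_{X_j^{(0)}}$ with the spatial ($L^{p_j}$) norm controlled independently of $k$. Third, I would compute $\|T_\sigma(f_1, \dots, f_N)\|_{B^s_{p,q}} \sim 2^{k(s + m')}$ for the appropriate exponent, where the power of $2^k$ produced by the operator is dictated by the Bernstein-type gain $\max\{n/p_j, n/2\}$ in each input (this is where the $\max\{n/p_j, n/2\}$ appears: it is the optimal $L^{p_j} \to L^2$ or $L^2 \to L^2$ trade-off on a shell of radius $2^k$). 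Comparing the two sides and inserting the definition \eqref{criticalorder} of $m$, the inequality \eqref{bdd-dfn} would read $2^{k\,s_1} \lesssim 2^{k \max\{n/p_1, n/2\}}$ after cancellation, which must hold as $k \to \infty$; this forces precisely $s_1 \le \max\{n/p_1, n/2\}$, giving the contradiction when the reverse strict inequality is assumed.

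The hard part will be the third step: arranging the test functions so that no accidental cancellation or over/under-counting occurs in the Besov norm of the output, and so that the exponent of $2^k$ on each side is exactly the critical one matching \eqref{criticalorder} rather than a lossy bound. In particular, when $p_j = \infty$ or $p_j < 2$ the optimal choice of wave packet (a single bump versus a randomized or modulated sum of bumps realizing the $L^{p_j} \to L^2$ extremizer) changes, and I would likely need two separate constructions according to whether $n/p_j \ge n/2$ or $n/p_j < n/2$, mirroring the $\max$ in \eqref{sassum}. For the endpoint on the target side ($s \ge -\max\{n/p', n/2\}$), I expect to dualize: either run the same packet argument directly in $B^s_{p,q}$ and read off the constraint from the low-frequency/large-scale behavior, or invoke a duality between the multilinear boundedness and an adjoint operator whose symbol again lies in the same class, converting the target constraint into an input-type constraint already handled. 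Throughout, the main technical obstacle is making the norm computations on both sides genuinely sharp (matching constants up to $k$-independent factors), since any slack would weaken the conclusion to a non-sharp bound.
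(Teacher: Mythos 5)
Your overall strategy (frequency-localized extremizers, a Bernstein-type heuristic for the exponent $\max\{n/p_j,n/2\}$, duality to convert the target constraint into a source constraint) points in the right direction, but the construction as you describe it would fail at its core step. A symbol that merely ``localizes each $\xi_j$ to a dyadic shell $|\xi_j|\sim 2^k$'' is a product of smooth Fourier multipliers at scale $2^k$; the resulting operator is essentially $\prod_j m_j(D)f_j$ and is bounded with a far better order than the critical $m$ of \eqref{criticalorder}, so it cannot detect the threshold. The whole point of the class $S^m_{0,0}$ is that the symbol may oscillate at \emph{unit} scale in $\xi$, and the paper's counterexample exploits exactly this: the symbol is a sum over $\approx 2^{\ell n(N-1)}$ unit lattice cubes $\prod_j\vphi(\xi_j-\mu_j)$ with unimodular coefficients $c_{\Mu}$, and those coefficients are chosen to cancel the phases $e^{i|\mu_j|^{a_j}}$ of Wainger's special trigonometric series $f_{a,b,\epsilon}$ (Lemma \ref{lemWainger}), which serve as the test functions. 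It is this phase-matching between the symbol's unit-scale coefficients and the lacunary test functions that defeats cancellation and produces the lattice-point count $2^{\ell(m-\sum b_j+(N-1)n)}$ on the output side; without degrees of freedom in the symbol at unit scale there is nothing to match, and the lower bound you need in your ``third step'' does not materialize. Your plan treats the choice of wave packets as the only place where the $L^{p_j}\!\to L^2$ extremizer enters, but the symbol must be co-designed with them.

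Two further ingredients are missing. First, the direct constructions only work in restricted ranges ($1<p_j<\infty$, finite $q$'s; and the duality step for $s_1\le\max\{n/p_1,n/2\}$ requires $2<p<\infty$, while for $0<p\le 2$ the paper must instead randomize with Rademacher coefficients and apply Khintchine's inequality, since the output is spread over an annulus rather than concentrated at frequency $0$). The full range $0<p,p_j\le\infty$, $0<q,q_j\le\infty$ claimed in Theorem \ref{thmnec} is then recovered by complex interpolation against the positive result $B^t_{2,2}\times\dots\times B^t_{2,2}\to B^t_{2,2}$ of Theorem \ref{main1}, followed by the limits $t\to\pm n/2$; your proposal has no mechanism for reaching quasi-Banach exponents. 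Second, the quantitative control of the operator norm by finitely many symbol seminorms (via the closed graph theorem, \eqref{operator-norm}) is needed to make the family of symbols $\sigma_\ell$ usable uniformly in $\ell$; this is implicit in your comparison of powers of $2^k$ but has to be justified. These are not polish issues: the unit-scale symbol structure, the Khintchine argument for $p\le 2$, and the interpolation-plus-limit step are each essential to reach the stated conclusion.
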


We shall explain some connection between our main results and  previous results.

Firstly, Theorem \ref{main1} yields that, 
for $0< p, p_1, \dots, p_N \le \infty$, $1/p \le 1/p_1 + \dots + 1/p_N$ and 
$0< q, q_1, \dots, q_N \le \infty$, $1/q \le 1/q_1 + \dots +1/q_N$, 
the boundedness
\begin{align}\label{bdd-main1-part}
\Op(S^{m}_{0,0}(\R^n, N))
\subset
B(B^{0}_{p_1, q_1} 
\times \dots \times 
B^{0}_{p_N, q_N} 
\to 
B^{0}_{p, q})
\end{align}
holds with
\begin{align*}
m 
= 
\min\left\{\frac{n}{p}, \frac{n}{2}\right\}
- 
\sum_{j=1}^N
\max\left\{\frac{n}{p_j}, \frac{n}{2}\right\}.
\end{align*}

If $0< p < \infty$ and $0 < p_1, \dots, p_N \le \infty$ satisfy
\begin{align}\label{addcondforexpo}
\frac{1}{\min\{p, 2\}}
\le 
\sum_{j=1}^N
\frac{1}{\max\{p_j, 2\}},
\end{align}
then the boundedness \eqref{bdd-main1-part} improves the boundedness result in Theorem C.
In fact, for $0< p < \infty$ and $0< p_1, \dots, p_N \le \infty$ satisfying \eqref{addcondforexpo}, we can choose $q_j = \max\{p_j, 2\}$, $j=1, \dots, N$, and $q = \min\{p, 2\}$ in \eqref{bdd-main1-part}, and hence we obtain 
\[
\Op(S^{m}_{0,0}(\R^n, N))
\subset
B(B^{0}_{p_1, \max\{p_1, 2\}} 
\times \dots \times 
B^{0}_{p_N, \max\{p_N, 2\}} 
\to 
B^{0}_{p, \min\{p, 2\}}).
\]
Since we have the embedding relations 
$B^{0}_{r, \min\{r, 2\}} \hookrightarrow h^r \hookrightarrow B^{0}_{r, \max\{r, 2\}}$ 
for $0 < r < \infty$,
and
$bmo \hookrightarrow B^{0}_{\infty, \infty}$, 
this boundedness gives an improvement of the corresponding $h^{p_1} \times \dots \times h^{p_N} \to h^p$ boundedness given in Theorem C. 

For the case $p= \infty$, 
if $0< p_1, \dots, p_N \le \infty$ satisfy
\[
1 
\le
\sum_{j=1}^N
\frac{1}{\max\{p_j, 2\}}, 
\]
then we can take $q_j = \max\{p_j, 2\}$, $j=1, \dots, N$ and $q =1$,
and consequently we obtain
\begin{align*}
&\Op(S^m_{0,0}(\R^n, N))
\subset
B(B^0_{p_1, \max\{p_1, 2\}} \times \dots \times B^0_{p_N, \max\{p_N, 2\}} \to B^0_{\infty, 1}).
\end{align*}
This improves the corresponding boundedness results given in Theorem C 
since $h^r \hookrightarrow B^{0}_{r, \max\{r, 2\}}$, $0< r < \infty$, $bmo \hookrightarrow B^0_{\infty, \infty}$ and 
$B^0_{\infty, 1}\hookrightarrow L^{\infty}$.

\bigskip
Secondly, the condition \eqref{sassum} is peculiar to the multilinear case. 
In fact, we can take any $s$ and $t$ in Theorem B, however, in the multilinear settings, Theorem \ref{thmnec} says that the conditions \eqref{sassum} are (almost) necessary
to assure the boundedness on Besov spaces.
We also notice that the condition \eqref{sassum} can be found in the author's paper \cite{Shida-Sobolev}.
In \cite{Shida-Sobolev}, it is proved that bilinear pseudo-differential operators with symbols in $S^m_{0, 0}(\R^n, 2)$ with the critical $m$ are bounded on Sobolev spaces under the assumption \eqref{sassum} with $N=2$.


The organization of this paper is as follows.
In Section 2, 
we give some notations and recall the definitions of some function spaces and embedding relations between them.
In Section 3, we give the proof of Theorem \ref{main1}.
In Section 4, we prove Theorem \ref{thmnec}.

\section{Preliminaries}\label{preliminaries}

For two nonnegative quantities $A$ and $B$,
the notation $A \lesssim B$ means that
$A \le CB$ for some unspecified constant $C>0$,
and $A \approx B$ means that
$A \lesssim B$ and $B \lesssim A$.


For $0 < p \le \infty$,
$p^{\prime}$ is  the conjugate exponent of $p$,
that is, $p^{\prime}$ is defined by 
$1/p+1/p^{\prime}=1$ if $1 < p \le \infty$ 
and 
$p^{\prime} = \infty$ if $0 < p \le 1$.



For a finite set $\Lambda$, 
$|\Lambda|$ denotes the number of the elements of $\Lambda$.


Let $\Sh(\R^n)$ and $\Sh'(\R^n)$ be the Schwartz space of
rapidly decreasing smooth functions on $\R^n$ and its dual,
the space of tempered distributions, respectively.
We define the Fourier transform $\F f$
and the inverse Fourier transform $\F^{-1}f$
of $f \in \Sh(\R^n)$ by
\[
\F f(\xi)
=\widehat{f}(\xi)
=\int_{\R^n}e^{-i \xi \cdot x} f(x)\, dx
\quad \text{and} \quad
\F^{-1}f(x)
=\frac{1}{(2\pi)^n}
\int_{\R^n}e^{i x \cdot \xi} f(\xi)\, d\xi.
\]
For $m \in L^{\infty}(\R^n)$,
the Fourier multiplier operator $m(D)$ is defined by
$m(D)f=\F^{-1}[m\widehat{f}]$ for $f \in \Sh(\R^n)$.

For a countable set $J$, the sequence space $\ell^q(J)$, $0 < q \le \infty$, is defined to be the set 
of all complex sequences $a = \{a_j\}_{j \in J}$ such that
\begin{align*}
\|a\|_{\ell^q(J)} 
=
\begin{cases}
\left(\sum_{j \in J} |a_j|^q \right)^{1/q}
&\text{if $0 < q < \infty$},
\\
\sup_{j \in J} |a_j|
&\text{if $q = \infty$}
\end{cases}
\end{align*}
is finite. For $a = \{a_j\}_{j \in J}$, we will use the notation $\|a_j\|_{\ell^q_j(J)}$ instead of $\|a\|_{\ell^q(J)}$
when we indicate the variable explicitly.

Let $\phi \in \Sh(\R^n)$ be such that $\int_{\R^n} \phi(x)\, dx \neq 0$.
For $0< p \le \infty$, the local Hardy space $h^p = h^p(\R^n)$ consists of all $f \in \Sh^\prime(\R^n)$ such that
\begin{equation*}
\|f\|_{h^p}
=
\left\|
\sup_{0< t < 1}
|\phi_t * f|
\right\|_{L^p}
< 
\infty,
\end{equation*}
where $\phi_t(x) = t^{-n} \phi(t^{-1} x)$. 
It is known that the definition of $h^p$ is independent of the choice of the function $\phi$
up to equivalence of quasi-norms.
It is also known
that $h^p = L^p$ for $1 < p \le \infty$ 
and $h^1 \hookrightarrow L^1$. 

The space $bmo = bmo(\R^n)$ consists of all locally integrable functions $f$ on $\R^n$
such that
\begin{equation*}
\|f\|_{bmo}
=
\sup_{|Q| \le 1}
\frac{1}{|Q|}
\int_{Q}
|f(x) -f_Q|
\, dx
+
\sup_{|Q| \ge 1}
\frac{1}{|Q|}
\int_{Q}
|f(x)|
\, dx
< \infty, 
\end{equation*}
where $Q$ ranges over all cubes in $\R^n$.
It is known that $L^\infty \subset bmo$.
It is also known that the dual space of $h^1$ coincides with $bmo$.

We recall the definition of Besov spaces.
Let $\psi_k \in \Sh(\R^n), k \ge 0,$ be such that 
\begin{align} \label{partition}
\begin{split}
&\supp \psi_0 \subset \{\xi \in \R^n : |\xi| \le 2\},
\quad
\supp \psi_k \subset \{\xi \in \R^n : 2^{k-1} \le |\xi| \le 2^{k+1}\},
\quad
k \ge 1,
\\
&
\|\partial^\alpha \psi_k\|_{L^\infty}
\le
C_\alpha 2^{-k |\alpha|},
\quad
\alpha \in \N^n_0,\ k \ge 0,
\\
&
\sum_{k = 0}^\infty
\psi_{k}(\xi)
=
1,
\quad
\xi \in \R^n.
\end{split}
\end{align} 
The Besov space
 $B^s_{p, q} = B^s_{p, q}(\R^n)$,
$0< p, q \le \infty$, 
$s \in \R$,
is defined to be the set of all 
$f \in \Sh^\prime(\R^n)$ 
such that 
\begin{align*}
\|f\|_{B^s_{p, q}}
=
\left\|
2^{k s} 
\left\|
\psi_k(D)f(x)
\right\|_{L^p_x(\R^n)} 
\right\|_{\ell^q_k(\N_0)}
<
\infty.
\end{align*}
It is known that the definition of Besov spaces 
is independent of the choice of $\psi_k$, $k=0, 1, 2, \dots$, up to the equivalence of quasi-norms.
If $1\le p, q < \infty$, then 
the dual space of $B^s_{p, q}$ coincides with 
$B^{-s}_{p^\prime, q^\prime}$.
The following embedding relations are well known;
\begin{align}
&B^s_{p, q_1} \hookrightarrow B^s_{p, q_2}, 
\label{Bq1Bq2}
\quad
\text{if}
\quad
q_1 \le q_2,
\\
& B^0_{p, \min\{p, 2\}} 
\hookrightarrow
h^p 
\hookrightarrow
B^0_{p, \max\{p, 2\}},
\quad
\text{if}
\quad
0<p<\infty,
\label{BhB}
\\
&
B^{0}_{\infty, 1} 
\hookrightarrow
L^{\infty}
\hookrightarrow
B^{0}_{\infty, \infty},
\label{BLBinfty}
\\
&
bmo
\hookrightarrow
B^{0}_{\infty, \infty}.
\notag
\end{align}
As a consequence of \eqref{Bq1Bq2}, \eqref{BhB} and \eqref{BLBinfty}, we have $h^p \hookrightarrow B^0_{p, \infty}$,
$0 < p \le \infty$,
which means
\begin{equation} \label{embd-hpB0pinfty}
\sup_{k \in \N_0}
\|\psi_k(D)f\|_{L^p}
\lesssim
\|f\|_{h^p}.
\end{equation}
For more basic properties about Besov spaces, 
see, e.g., Triebel \cite{Triebel-ToFS}.

It is known that
the $L^p$-norm in the definition of $B^s_{p, q}$-norm 
can be replaced by the $h^p$-norm. 
More precisely, the following proposition was given by Qui \cite{Qui}. 

\begin{prop}[\cite{Qui}]\label{propQui}
Let $0< p, q \le \infty$ and $s \in \R$. Then,
\begin{equation}\label{Bhpequiv}
\|f\|_{B^s_{p, q}}
\approx
\left\|
2^{k s} 
\left\|
\psi_k(D)f(x)
\right\|_{h^p_x(\R^n)}
\right\|_{\ell^q_{k}(\N_0)}.
\end{equation}
\end{prop}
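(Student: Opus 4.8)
The plan is to reduce the claimed equivalence \eqref{Bhpequiv} to the single-block comparison
\[
\|\psi_k(D)f\|_{L^p}\lesssim\|\psi_k(D)f\|_{h^p}\lesssim\|\psi_k(D)f\|_{L^p},
\]
with constants independent of $k$. Once this is available, multiplying by $2^{ks}$ and taking the $\ell^q_k(\N_0)$ quasi-norm yields both directions of \eqref{Bhpequiv}, since the $L^p$-version of the right-hand side is exactly $\|f\|_{B^s_{p,q}}$. Writing $g=\psi_k(D)f$, the essential feature is that $g$ is band-limited, $\supp\widehat g\subset\{|\xi|\le 2^{k+1}\}$. Because $h^p=L^p$ for $1<p\le\infty$, only the range $0<p\le1$ needs an argument, although the proof below is uniform in $p$.

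For the first inequality I would normalize $\phi$ so that $\int_{\R^n}\phi=1$, which is harmless since $h^p$ is independent of the choice of $\phi$ up to equivalence. A band-limited $L^p$ function is continuous and bounded (Nikol'skii inequality, see \cite{Triebel-ToFS}), so $\phi_t*g\to g$ pointwise as $t\to0^+$; hence $|g(x)|\le\sup_{0<t<1}|\phi_t*g(x)|$ everywhere, giving $\|g\|_{L^p}\le\|g\|_{h^p}$.

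The substance is the reverse bound $\|g\|_{h^p}\lesssim\|g\|_{L^p}$, uniformly in $k$. Fix $\tilde\psi_k\in\Sh(\R^n)$ with $\tilde\psi_k\equiv1$ on $\supp\psi_k$, supported in a comparable annulus, and $|\pa^\gamma\tilde\psi_k|\lesssim 2^{-k|\gamma|}$. Since $g=\tilde\psi_k(D)g$, we may write $\phi_t*g=K_{t,k}*g$ with $\widehat{K_{t,k}}(\xi)=\widehat\phi(t\xi)\tilde\psi_k(\xi)$. The key estimate is
\[
|K_{t,k}(x)|\lesssim 2^{kn}(1+2^k|x|)^{-L}\quad\text{for every }L,
\]
uniformly in $t\in(0,1)$ and $k\ge1$ (the case $k=0$ being elementary). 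I would prove it by integration by parts: it suffices to show $\|\pa^\gamma\widehat{K_{t,k}}\|_{L^1}\lesssim 2^{k(n-|\gamma|)}$. Expanding $\pa^\gamma(\widehat\phi(t\cdot)\tilde\psi_k)$ via the Leibniz rule, each term carries $2^{-k|\gamma''|}$ from $\tilde\psi_k$ and $t^{|\gamma'|}(\pa^{\gamma'}\widehat\phi)(t\xi)$ from $\widehat\phi$; on $\supp\tilde\psi_k$ one has $|\xi|\approx 2^k$, and distinguishing $t2^k\lesssim1$ (where $\widehat\phi$ is merely bounded and $t^{|\gamma'|}\le 2^{-k|\gamma'|}$) from $t2^k\gtrsim1$ (where the rapid decay $|(\pa^{\gamma'}\widehat\phi)(t\xi)|\lesssim(t2^k)^{-L'}$ again yields $t^{|\gamma'|}(t2^k)^{-L'}\le 2^{-k|\gamma'|}$ for $t\ge 2^{-k}$) produces the claimed bound in both regimes.

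With the kernel estimate in hand I would invoke the Peetre maximal function $g^{*}_{a}(x)=\sup_{y}|g(x-y)|(1+2^k|y|)^{-a}$. Choosing $0<r<p$ and $a>n/r$, the Plancherel--Polya--Peetre inequality (\cite{Triebel-ToFS}) gives $g^{*}_{a}(x)\lesssim(M(|g|^r)(x))^{1/r}$, where $M$ is the Hardy--Littlewood maximal operator; and since $\int|K_{t,k}(y)|(1+2^k|y|)^a\,dy\lesssim1$ once $L>a+n$, we obtain $|\phi_t*g(x)|\le g^{*}_{a}(x)\int|K_{t,k}(y)|(1+2^k|y|)^a\,dy\lesssim(M(|g|^r)(x))^{1/r}$ uniformly in $t$. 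Taking the supremum over $t\in(0,1)$, then the $L^p$ quasi-norm, and using the boundedness of $M$ on $L^{p/r}$ (valid because $p/r>1$), I get $\|g\|_{h^p}\lesssim\|g\|_{L^p}$ with constant independent of $k$. The main obstacle is exactly the uniform-in-$k$ kernel bound in the regime $t\approx1$ with $k$ large: there $\phi_t$ is broad compared with the wavelength $2^{-k}$ of $g$, the naive estimate $\int|\phi_t(y)|(1+2^k|y|)^a\,dy$ diverges like $2^{ka}$, and one must genuinely exploit the smoothness and decay of $\widehat\phi$ through the integration-by-parts argument above.
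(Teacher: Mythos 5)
Your proof is correct. Note that the paper itself offers no proof of this proposition: it is quoted as a known result with a citation to Qui, so there is no internal argument to compare against. Your self-contained route is the standard one: reduce to the uniform-in-$k$ single-block equivalence $\|\psi_k(D)f\|_{L^p}\approx\|\psi_k(D)f\|_{h^p}$, prove the easy direction by the pointwise limit $\phi_t*g\to g$ for band-limited $g$, and prove the substantive direction by replacing $\phi_t$ with the kernel $K_{t,k}$ whose Fourier transform is $\widehat\phi(t\xi)\tilde\psi_k(\xi)$ and verifying the bound $|K_{t,k}(x)|\lesssim 2^{kn}(1+2^k|x|)^{-L}$ uniformly in $t\in(0,1)$, after which the Peetre maximal inequality and the Hardy--Littlewood maximal theorem on $L^{p/r}$ finish the job. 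You correctly isolate the only delicate point (the regime $t\approx 1$, $k$ large, where the naive moment bound $\int|\phi_t(y)|(1+2^k|y|)^a\,dy$ grows like $2^{ka}$) and your Leibniz/integration-by-parts case analysis over $t2^k\lesssim 1$ versus $t2^k\gtrsim 1$ handles it. The only cosmetic remark is that in the direction where one starts from finiteness of the $h^p$-side, the block $\psi_k(D)f$ is a priori a band-limited tempered distribution rather than an $L^p$ function, so the pointwise identity $|g(x)|\le\sup_{0<t<1}|\phi_t*g(x)|$ should be justified by writing $g=\eta*g$ for a Schwartz function $\eta$ whose Fourier transform equals $1$ on the band, which makes $g$ a smooth function of polynomial growth and legitimizes the limit $t\to 0^+$; this is routine and does not affect the argument.
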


We end this section by recalling the definition and some properties of  the Wiener amalgam space.
Let
$\kappa \in \Sh(\R^n)$ be such that $\supp \kappa$ is compact and
\begin{align} \label{part-Wiener}
\left|
\sum_{\mu \in \Z^n}
\kappa(\xi - \mu)
\right|
\ge 1,
\quad
\xi \in \R^n.
\end{align}
The Wiener amalgam space $W^{p, q}_s =W^{p, q}_s(\R^n)$, $0< p, q \le \infty$, $s \in\R$, 
consists of all $f \in \Sh^\prime(\R^n)$ such that 
\[
\|f\|_{W^{p, q}_s}
=
\left\|
\left\|
\la \mu \ra^{s}
\Box_{\mu}f(x)
\right\|_{\ell^q_{\mu}(\Z^n)}
\right\|_{L^p_x(\R^n)}
<
\infty,
\]
where $\Box_{\mu}f= \kappa(D-\mu)f = \F^{-1}[\kappa(\cdot-\mu) \widehat{f}]$.
We simply write $W^{p, q} = W^{p, q}_{0}$.
The space $W^{p, q}_{s}$ does not depend on the choice of $\kappa$ up to equivalence of quasi-norms.
For the definition of the Wiener amalgam space,
see Triebel \cite{Triebel-ZAA}. 

The embedding relations between Lebesgue, local Hardy spaces 
and Wiener amalgam spaces are well investigated as follows.

\begin{lem}[\cite{CKS, GWYZ}] \label{embd}
Let $0< p, p_1, p_2, q_1, q_2 \le \infty$. Then,
\begin{align}
&W^{p_1, q_1}_s \hookrightarrow W^{p_2, q_2}_s
\quad
\text{if}
\quad
p_1 \le p_2, 
\quad
\text{and}
\quad
q_1 \le q_2;
\label{monotone}
\\
&W^{p, 2}_{\alpha(p)} \hookrightarrow h^p 
\quad
\text{if}
\quad
0< p< \infty,
\quad
\text{where}
\quad
\alpha(p) 
= 
(n/2)
-
\min
\{n/2, n/p\};
\label{Wh}
\\
&h^p \hookrightarrow W^{p, 2}_{\beta(p)}
\quad
\text{if}
\quad
0< p< \infty,
\quad
\text{where}
\quad
\beta(p) 
= 
(n/2)
-
\max
\{n/2, n/p\};
\label{hW}
\\
&W^{\infty, 1} \hookrightarrow L^\infty.
\label{WLinfty}
\end{align}
\end{lem}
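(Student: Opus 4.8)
\emph{The plan} is to treat \eqref{monotone}, \eqref{Wh}, \eqref{hW}, and \eqref{WLinfty} separately, isolating the two purely structural monotonicity statements from the two substantive comparisons between the frequency-uniform decomposition defining $W^{p,q}_s$ and the dyadic Littlewood--Paley decomposition underlying $h^p$.

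First I would dispose of the monotonicity \eqref{monotone}. The inequality in $q$ is immediate: for fixed $x$ the nesting $\|a\|_{\ell^{q_2}}\le\|a\|_{\ell^{q_1}}$ ($q_1\le q_2$), applied to $a=\{\la\mu\ra^s\Box_\mu f(x)\}_\mu$ and then integrated in $x$, gives $\|f\|_{W^{p,q_2}_s}\le\|f\|_{W^{p,q_1}_s}$. The inequality in $p$ is the one genuinely special to amalgam spaces, and I would derive it from band-limitedness. Choosing $\widetilde\kappa\in\Sh$ with $\widetilde\kappa\equiv1$ on $\supp\kappa$, one has the reproducing identity $\Box_\mu f=(e_\mu\widetilde K)*\Box_\mu f$ with $e_\mu(x)=e^{ix\cdot\mu}$ and $\widetilde K=\F^{-1}\widetilde\kappa\in\Sh$; since $|\widetilde K|$ is dominated by an integrable radially decreasing profile, each $\Box_\mu f$ is slowly varying at unit scale, so the function $F(x)=\|\la\mu\ra^s\Box_\mu f(x)\|_{\ell^q_\mu}$ cannot concentrate below unit scale and satisfies $\|F\|_{L^{p_2}}\lesssim\|F\|_{L^{p_1}}$ for $p_1\le p_2$ (a Nikol'skii--Plancherel--P\'olya estimate, implemented by discretizing $\R^n$ into unit cubes and comparing the local $L^{p_1}$ and $L^{p_2}$ norms of the band-limited pieces). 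The endpoint \eqref{WLinfty} fits the same circle: from \eqref{part-Wiener} one builds a reconstruction $f=\sum_\mu g_\mu$ with $|g_\mu(x)|$ comparable to $|\Box_\mu f(x)|$, whence $|f(x)|\le\sum_\mu|g_\mu(x)|\lesssim\|\Box_\mu f(x)\|_{\ell^1_\mu}$ pointwise, and taking $L^\infty_x$ gives $\|f\|_{L^\infty}\lesssim\|f\|_{W^{\infty,1}}$.

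The heart of the lemma is the pair \eqref{Wh}--\eqref{hW}. For $0<p<\infty$ I would use the square-function characterization $\|f\|_{h^p}\approx\big\|(\sum_{k\ge0}|\psi_k(D)f|^2)^{1/2}\big\|_{L^p}$ and reconcile it with the uniform square function $\big\|(\sum_\mu\la\mu\ra^{2s}|\Box_\mu f|^2)^{1/2}\big\|_{L^p}$. The bridge is combinatorial: the dyadic annulus $\{|\xi|\sim2^k\}$ carries $\sim2^{kn}$ unit cells, i.e.\ indices $\mu$ with $\la\mu\ra\sim2^k$, and modulo harmless fattenings $\psi_k(D)f=\sum_{\la\mu\ra\sim2^k}\Box_\mu f$. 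At the base point $p=2$ both sides equal $\|f\|_{L^2}$ by Plancherel together with the bounded overlap of the supports $\mu+\supp\kappa$, which forces the weight to vanish, consistently with $\alpha(2)=\beta(2)=0$. Away from $p=2$ the discrepancy comes from passing an $\ell^2$ sum over the $\sim2^{kn}$ cells of a fixed annulus through the $L^p$ norm, and the optimal loss is $2^{kn|1/p-1/2|}\approx\la\mu\ra^{n|1/p-1/2|}$, producing the weights $\alpha(p)=n(1/2-1/p)_+$ and $\beta(p)=-n(1/p-1/2)_+$. Concretely, for \eqref{Wh} with $p\ge2$ I would combine Cauchy--Schwarz over the cells of each annulus with Bernstein's inequality and interpolate against the $p=2$ identity to get the sharp exponent $n/2-n/p$, while for \eqref{hW} with $p\ge2$ the reverse comparison with vanishing weight is a Rubio-de-Francia-type square-function bound for the disjoint blocks $\Box_\mu$.

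The main obstacle I anticipate is the range $p<2$, and especially the quasi-Banach range $p\le1$, where the comparison runs in the wrong direction for a pointwise Cauchy--Schwarz and duality is unavailable. Here one must exploit genuine $L^p$ near-orthogonality of the uniform blocks inside each annulus rather than a crude count. I would handle this through the maximal-function characterization of $h^p$: because each $\Box_\mu f$ is band-limited at unit scale it is dominated by a Peetre-type maximal function, itself controlled by the exponent-lowered operator $M_r g=(M|g|^r)^{1/r}$ for any fixed $0<r<\min\{p,2\}$, for which the vector-valued Fefferman--Stein inequality is valid on all of $L^p$, $0<p<\infty$. Feeding this into the square-function comparison, with the powers of $\la\mu\ra$ bookkept so that the surviving exponent is exactly $\alpha(p)$ for \eqref{Wh} and $\beta(p)$ for \eqref{hW}, closes the estimates. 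Verifying that this bookkeeping is uniform across the regimes separated by $p=2$ and degenerates correctly to weight zero at $p=2$ --- rather than any single constituent inequality --- is the delicate point.
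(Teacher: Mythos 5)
A preliminary remark: the paper offers no proof of Lemma \ref{embd}. It is imported wholesale, with \eqref{Wh} and \eqref{hW} attributed to \cite{CKS} for $1<p<\infty$ and to \cite{GWYZ} for $0<p\le 1$, and \eqref{monotone} to \cite{KMT-JFA}; so the only question is whether your sketch would actually close as a self-contained proof. For the two soft statements it essentially would: the $\ell^{q_1}\hookrightarrow\ell^{q_2}$ nesting and the unit-scale Plancherel--P\'olya argument for the $p$-monotonicity in \eqref{monotone} are correct and standard. One flaw in your treatment of \eqref{WLinfty}: the assertion that the reconstruction pieces satisfy $|g_\mu(x)|\lesssim|\Box_\mu f(x)|$ pointwise is false, since $g_\mu$ is produced from $\Box_\mu f$ by a Fourier multiplier, which is nonlocal. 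What is true is $|g_\mu|\le|K|*|\Box_\mu f|$ for a fixed integrable $K$ independent of $\mu$, and the embedding follows only after summing over $\mu$, pulling the sum inside the convolution, and taking the supremum in $x$; the pointwise inequality you wrote is not available.

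The genuine gap is in \eqref{Wh}--\eqref{hW}, which carry all the content of the lemma and are needed by the paper precisely in the quasi-Banach range, since the $p_j$ in Theorem \ref{main1} may be arbitrarily small. Your final paragraph names plausible tools (Peetre maximal functions, the lowered-exponent operator $M_r$, the vector-valued Fefferman--Stein inequality), but it does not execute either estimate and explicitly defers the ``bookkeeping'' that produces $\alpha(p)$ and $\beta(p)$ --- and that bookkeeping is the theorem. Two concrete failure points. First, for \eqref{hW} with $p\le1$ a maximal-function majorization of the individual blocks is not enough: one must beat the divergence of $\sum_\mu\la\mu\ra^{2\beta(p)}$ (already $\sum_\mu\la\mu\ra^{-n}=\infty$ at $p=1$), which is why \cite{GWYZ} runs the argument through an atomic decomposition of $h^p$ rather than through pointwise maximal control alone. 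Second, for \eqref{Wh} with $2<p<\infty$ your proposed interpolation against the $p=2$ identity has no admissible second endpoint at fixed $q=2$: the lemma excludes $p=\infty$ from \eqref{Wh}, and the available endpoint \eqref{WLinfty} sits at $q=1$, so interpolation drags $q$ away from $2$. For that subcase the clean route is duality, $W^{p,2}_{\alpha(p)}\hookrightarrow L^p$ being equivalent to $L^{p'}\hookrightarrow W^{p',2}_{\beta(p')}$ because $-\alpha(p)=\beta(p')$; but this still leaves the range $0<p\le1$ of both embeddings unproven in your proposal.
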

The embedding relations \eqref{Wh} and \eqref{hW} are given 
by Cunanan-Kobayashi-Sugimoto \cite{CKS} for the case $1< p< \infty$
and Guo-Wu-Yang-Zhao\cite{GWYZ} for the $0< p \le 1$.
The embedding \eqref{WLinfty} is given by \eqref{WLinfty}
The proof of \eqref{monotone} can be found in Kato-Miyachi-Tomita \cite{KMT-JFA}.

We will use these embedding relations in the proof of Theorem \ref{main1}.
The idea of using the Wiener amalgam spaces comes from the recent works of T. Kato, A. Miyachi and N. Tomita (see \cite{KMT-JPDOA, KMT-JMSJ, KMT-JFA}).

In the rest of this section, we shall show the following proposition.
\begin{prop} \label{Keyprop}
Let $N \ge 2$, $0 < p_0, p_1, \dots, p_N \le \infty$, $1/p_0 = 1/p_1 + \dots + 1/p_N$, and $M_0 \in \N_0$. 
Let $R, R_1, \dots, R_N \ge 1$.
Let $\Lambda, \Lambda_1, \dots, \Lambda_N$ be subsets of $\Z^n$ satisfying
\begin{align*}
\Lambda = \{\nu \in \Z^n : |\nu| \lesssim R\},
\quad
\Lambda_j  = \{\nu \in \Z^n : |\nu| \approx R_j\},
\quad
j=1, \dots, N.
\end{align*}
Suppose that $R_1 = \max_{1 \le j \le N} R_j$ and that $R_2= \max_{2 \le j \le N} R_j$.
Then the estimate
\begin{align*}
&
\left\|
\left\|
\sum_{\substack{ \Nu \in \Lambda_1 \times \dots \times \Lambda_N
\\
\nu_1 + \dots + \nu_N = \tau}
}
\prod_{j=1}^N
|\Box_{\nu_j} f_j|
\right\|_{\ell^2_\tau(\Lambda)}
\right\|_{L^{p_0}}
\lesssim
\min \{R_2^{n/2}, R^{n/2}\}
\prod_{j=3}^N
R_j^{n/2}
\prod_{j=1}^N
R_j^{-\beta(p_j)}
\|f_j\|_{h^{p_j}}
\end{align*}
holds with the implicit constant independent of $R_1, \dots, R_N$ and $R$.
\end{prop}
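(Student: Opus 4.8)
The plan is to treat the inner sum, for each fixed $x \in \R^n$, as an $N$-fold discrete convolution on $\Z^n$, reduce the claim to a pointwise (in $x$) sequence inequality, and then pass to $L^{p_0}$ by H\"older's inequality together with the Wiener amalgam embedding \eqref{hW}. Setting $a^{(j)}(\nu_j) = |\Box_{\nu_j} f_j(x)|$ for $\nu_j \in \Lambda_j$ and $a^{(j)}(\nu_j)=0$ otherwise, the quantity inside the $\ell^2_\tau$-norm equals $(a^{(1)} * \dots * a^{(N)})(\tau)$, and the task is to bound its $\ell^2$-norm over $\Lambda = \{|\nu| \lesssim R\}$, whose cardinality is $\approx R^n$ since $R \ge 1$.

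The core is the pointwise estimate
\[
\left\| \sum_{\substack{\Nu \in \Lambda_1 \times \dots \times \Lambda_N \\ \nu_1 + \dots + \nu_N = \tau}} \prod_{j=1}^N |\Box_{\nu_j} f_j| \right\|_{\ell^2_\tau(\Lambda)} \lesssim \min\{R_2^{n/2}, R^{n/2}\} \prod_{j=3}^N R_j^{n/2} \prod_{j=1}^N \left\| \Box_{\nu_j} f_j \right\|_{\ell^2_{\nu_j}(\Lambda_j)},
\]
which I would derive from two applications of Young's convolution inequality on $\Z^n$. To obtain the $R_2^{n/2}$ alternative, place $a^{(1)}$ in $\ell^2$ and $a^{(2)}, \dots, a^{(N)}$ in $\ell^1$ (so that $\ell^2 * \ell^1 * \dots * \ell^1 \hookrightarrow \ell^2$), and estimate each $\ell^1$-norm over $\Lambda_j$ by $|\Lambda_j|^{1/2} \approx R_j^{n/2}$ times the corresponding $\ell^2$-norm via Cauchy--Schwarz; this produces $\prod_{j=2}^N R_j^{n/2}$. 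To obtain the $R^{n/2}$ alternative, first discard the output constraint cheaply through $\| c \|_{\ell^2_\tau(\Lambda)} \le |\Lambda|^{1/2} \| c \|_{\ell^\infty_\tau} \approx R^{n/2} \| c \|_{\ell^\infty_\tau}$, and then bound $\| a^{(1)} * \dots * a^{(N)} \|_{\ell^\infty}$ by placing $a^{(1)}$ and $a^{(2)}$ in $\ell^2$ and the remaining factors in $\ell^1$ (so that $\ell^2 * \ell^2 * \ell^1 * \dots * \ell^1 \hookrightarrow \ell^\infty$), which costs only $\prod_{j=3}^N R_j^{n/2}$. Since both bounds are valid for all $R$ and $R_2$, taking the smaller one yields the factor $\min\{R_2^{n/2}, R^{n/2}\}$.

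With the pointwise inequality established, I would apply the $L^{p_0}$-norm and then H\"older's inequality with $1/p_0 = 1/p_1 + \dots + 1/p_N$ to factor the right-hand side into $\prod_{j=1}^N \big\| \, \| \Box_{\nu_j} f_j \|_{\ell^2_{\nu_j}(\Lambda_j)} \big\|_{L^{p_j}}$. For the final step, the embedding $h^{p_j} \hookrightarrow W^{p_j, 2}_{\beta(p_j)}$ from \eqref{hW} gives $\big\| \, \| \langle \nu_j \rangle^{\beta(p_j)} \Box_{\nu_j} f_j \|_{\ell^2_{\nu_j}(\Z^n)} \big\|_{L^{p_j}} \lesssim \| f_j \|_{h^{p_j}}$; restricting the inner sum to $\nu_j \in \Lambda_j$, where $\langle \nu_j \rangle \approx R_j$ and hence $\langle \nu_j \rangle^{\beta(p_j)} \approx R_j^{\beta(p_j)}$, extracts exactly the weight $R_j^{-\beta(p_j)}$ and leaves $\| f_j \|_{h^{p_j}}$, completing the bound.

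The main obstacle is the $R^{n/2}$ branch of $\min\{R_2^{n/2}, R^{n/2}\}$: a single straightforward application of Young's inequality only yields $R_2^{n/2}$, and the improvement when $R < R_2$ crucially uses that the convolution is measured solely over the small index set $\Lambda$, allowing the $\nu_2$-summation to be traded for the $\tau$-summation through the crude bound $\ell^2(\Lambda) \hookrightarrow R^{n/2} \ell^\infty$. A secondary point requiring separate care is the endpoint where some $p_j = \infty$, for which \eqref{hW} is not available; there one argues directly using the identification $h^\infty = L^\infty$.
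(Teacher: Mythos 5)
Your proposal is correct and follows essentially the same route as the paper: the $R_2^{n/2}$ branch via Young's inequality with one factor in $\ell^2$ and the rest in $\ell^1$ (converted to $\ell^2(\Lambda_j)$ by Cauchy--Schwarz at cost $R_j^{n/2}$), the $R^{n/2}$ branch via a uniform-in-$\tau$ bound on the convolution combined with $\|\cdot\|_{\ell^2_\tau(\Lambda)} \lesssim R^{n/2}\|\cdot\|_{\ell^\infty_\tau}$ (the paper phrases the $\ell^2 * \ell^2 * \ell^1 * \cdots \hookrightarrow \ell^\infty$ step as Cauchy--Schwarz in $\nu_1$ followed by Young for the remaining $(N-1)$-fold convolution), and then H\"older in $L^{p_0}$ plus the embedding \eqref{hW}. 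Your remark that $p_j=\infty$ needs a separate word is fair --- the paper silently applies \eqref{hW} there too, though the required estimate does hold with $h^\infty = L^\infty$ and $\beta(\infty)=0$.
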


\begin{proof}
Notice that $|\Lambda| \lesssim R^n$ and $|\Lambda_j| \approx R_j^n$, $j=1, \dots, N$.
First, 
we have by Young's inequality and H\"older's inequality
\begin{align*}
\left\|
\sum_{\substack{ 
\Nu \in \Lambda_1 \times \dots \times \Lambda_N
\\
\nu_1 + \dots + \nu_N = \tau
}}
\prod_{j=1}^N
|\Box_{\nu_j} f_j|
\right\|_{\ell^2_\tau(\Lambda)}
&\le
\left\|
\sum_{\substack{ 
\Nu \in \Lambda_1 \times \dots \times \Lambda_N
\\
\nu_1 + \dots + \nu_N = \tau
}}
\prod_{j=1}^N
|\Box_{\nu_j} f_j|
\right\|_{\ell^2_\tau(\Z^n)}
\\
&\le
\left\|
\Box_{\nu_1} f_1
\right\|_{\ell^2_{\nu_1}(\Lambda_1)}
\prod_{j=2}^N
\left\|
\Box_{\nu_j} f_j
\right\|_{\ell^1_{\nu_j}(\Lambda_j)}
\\
&\lesssim
\left\|
\Box_{\nu_1} f_1
\right\|_{\ell^2_{\nu_1}(\Lambda_1)}
\prod_{j=2}^N
R_j^{n/2}
\left\|
\Box_{\nu_j} f_j
\right\|_{\ell^2_{\nu_j}(\Lambda_j)}.
\end{align*}
Hence, this estimate and H\"older's inequality yield that
\begin{align} \label{est-R2}
\left\|
\left\|
\sum_{\substack{ 
\Nu \in \Lambda_1 \times \dots \times \Lambda_N
\\
\nu_1 + \dots + \nu_N = \tau
}}
\prod_{j=1}^N
|\Box_{\nu_j} f_j|
\right\|_{\ell^2_\tau(\Lambda)}
\right\|_{L^{p_0}}
\lesssim
\prod_{j = 2}^N
R_j^{n/2}
\prod_{j=1}^N
\left\|
\left\|
\Box_{\nu_j} f_j
\right\|_{\ell^2_{\nu_j}(\Lambda_j)}
\right\|_{L^{p_j}}.
\end{align}
On the other hand,
 it follows from H\"older's inequality  that
\begin{align*}
\sum_{\substack{ 
\Nu \in \Lambda_1 \times \dots \times \Lambda_N
\\
\nu_1 + \dots + \nu_N = \tau
}}
\prod_{j=1}^N
|\Box_{\nu_j} f_j|
&=
\sum_{\nu_1 \in  \Lambda_1}
|\Box_{\nu_1} f_1|
\sum_{
\substack{
(\nu_2, \dots, \nu_N) \in \Lambda_2 \times \dots \times \Lambda_N
\\
 \nu_2 + \dots + \nu_N = \tau -\nu_1}
}
\prod_{j=2}^N
|\Box_{\nu_j} f_j|
\\
&\le
\left\|
\Box_{\nu_1} f_1
\right\|_{\ell^2_{\nu_1}(\Lambda_1)}
\left\|
\sum_{
\substack{
(\nu_2, \dots, \nu_N) \in \Lambda_2 \times \dots \times \Lambda_N
\\
 \nu_2 + \dots + \nu_N = \tau - \nu_1}
}
\prod_{j=2}^N
|\Box_{\nu_j} f_j|
\right\|_{\ell^2_{\nu_1}(\Z^n)}
\\
&=
\left\|
\Box_{\nu_1} f_1
\right\|_{\ell^2_{\nu_1}(\Lambda_1)}
\left\|
\sum_{
\substack{
(\nu_2, \dots, \nu_N) \in \Lambda_2 \times \dots \times \Lambda_N
\\
 \nu_2 + \dots + \nu_N = \mu}
}
\prod_{j=2}^N
|\Box_{\nu_j} f_j|
\right\|_{\ell^2_{\mu}(\Z^n)}.
\end{align*}
By Young's inequality and H\"older's inequality, we have
\begin{equation*} 
\left\|
\sum_{
\substack{
(\nu_2, \dots, \nu_N) \in \Lambda_2 \times \dots \times \Lambda_N
\\
 \nu_2 + \dots + \nu_N = \mu}
}
\prod_{j=2}^N
|\Box_{\nu_j} f_j|
\right\|_{\ell^2_{\mu}(\Z^n)}
\lesssim
\prod_{j=3}^N
R_j^{n/2}
\prod_{j=2}^N
\left\|
\Box_{\nu_j} f_j
\right\|_{\ell^2_{\nu_j}(\Lambda_j)}.
\end{equation*}
Hence we obtain by H\"older's inequality
\begin{align} \label{est-R}
\left\|
\left\|
\sum_{\substack{ 
\Nu \in \Lambda_1 \times \dots \times \Lambda_N
\\
\nu_1 + \dots + \nu_N = \tau
}}
\prod_{j=1}^N
|\Box_{\nu_j} f_j|
\right\|_{\ell^2_\tau(\Lambda)}
\right\|_{L^{p_0}}
&\lesssim
R^{n/2}
\prod_{j=3}^N
R_j^{n/2}
\prod_{j=1}^N
\left\|
\left\|
\Box_{\nu_j} f_j
\right\|_{\ell^2_{\nu_j}(\Lambda_j)}
\right\|_{L^{p_j}}.
\end{align}
Therefore, combining \eqref{est-R2} and \eqref{est-R}, we obtain
\[
\left\|
\left\|
\sum_{\substack{ 
\Nu \in \Lambda_1 \times \dots \times \Lambda_N
\\
\nu_1 + \dots + \nu_N = \tau
}}
\prod_{j=1}^N
|\Box_{\nu_j} f_j|
\right\|_{\ell^2_\tau(\Lambda)}
\right\|_{L^{p_0}}
\lesssim
\min \{R_2^{n/2}, R^{n/2}\}
\prod_{j=3}^N
R_j^{n/2}
\prod_{j=1}^N
\left\|
\left\|
\Box_{\nu_j} f_j
\right\|_{\ell^2_{\nu_j}(\Lambda_j)}
\right\|_{L^{p_j}}.
\]
Since $\la \nu_j \ra \approx R_j$ if $\nu_j \in \Lambda_{j}$, it follows from the embedding $h^{p_j} \hookrightarrow W^{p_j, 2}_{\beta(p_j)}$ that 
\begin{align*}
\left\|
\left\|
\Box_{\nu_j} f_j
\right\|_{\ell^2_{\nu_j}(\Lambda_j)}
\right\|_{L^{p_j}}
&\lesssim
R_j^{-\beta(p_j)}
\left\|
\left\|
\langle \nu_j \rangle^{\beta(p_j)}
\Box_{\nu_j} f_j
\right\|_{\ell^2_{\nu_j}(\Z^n)}
\right\|_{L^{p_j}}
\\
&
\lesssim
R_j^{-\beta(p_j)}
\|f_j\|_{h^{p_j}},
\quad
j=1, \dots, N.
\end{align*}
The proof is complete.
\end{proof}

\begin{rem} \label{Keyrem}
By the Cauchy-Schwartz inequality and Proposition \ref{Keyprop}, we also have 
\begin{align*}
\left\|
\left\|
\sum_{\substack{ \Nu \in \Lambda_1 \times \dots \times \Lambda_N
\\
\nu_1 + \dots + \nu_N = \tau}
}
\prod_{j=1}^N
|\Box_{\nu_j} f_j|
\right\|_{\ell^1_\tau(\Lambda)}
\right\|_{L^{p_0}}
\lesssim
R^{n/2}
\min \{R_2^{n/2}, R^{n/2}\}
\prod_{j=3}^N
R_j^{n/2}
\prod_{j=1}^N
R_j^{-\beta(p_j)}
\|f_j\|_{h^{p_j}}.
\end{align*}
We also use this estimate in the proof of Theorem \ref{main1}.
\end{rem}

\section{Proof of Theorem \ref{main1}}

In this section, we shall prove Theorem \ref{main1}.
Let $0< p, p_j, q, q_j \le \infty$ and $s, s_j \in \R$, $j=1, \dots, N$, be the same as in Theorem \ref{main1}.
Throughout this section, 
we always assume that  $\sigma \in S^{m}_{0,0}(\R^n, N)$ with $m$ given by \eqref{criticalorder}.
We use the notation $\Xxi = (\xi_1, \dots, \xi_N) \in (\R^n)^N$.

The following method using the Fourier series expansion 
goes back at least to Coifman-Meyer \cite{CM-Asterisque, CM-AIF}. 

Let $\vphi, \widetilde{\vphi} \in \Sh(\R^n)$ be such that
\begin{align*}
&\supp \vphi \subset [-1, 1]^n, 
\quad 
\sum_{\nu \in \Z^n} \vphi(\xi-\nu) = 1, 
\quad 
\xi \in \R^n,
\\
&\supp \widetilde{\vphi} \subset [-3, 3]^n,
\quad
0 \le \widetilde{\vphi} \le 1,
\quad
\widetilde{\vphi} = 1 
\quad
\text{on}
\quad
[-1, 1]^n.
\end{align*}
We remark that $\vphi$ and $\widetilde{\vphi}$
satisfy \eqref{part-Wiener}.

We decompose the symbol $\sigma = \sigma(x, \Xxi)$ as
\begin{align*}
\sigma(x, \Xxi)
&=
\sum_{\Nu = (\nu_1, \dots, \nu_N) \in (\Z^n)^N} 
\sigma(x, \Xxi)
\prod_{j=1}^N
\vphi(\xi_j-\nu_j)
=
\sum_{\Nu \in (\Z^n)^N} 
\sigma_{\Nu}(x, \Xxi),
\end{align*}
where $\sigma_{\Nu}(x, \Xxi) = \sigma(x, \Xxi) \prod_{j=1}^N \vphi(\xi_j-\nu_j)$.
We define
\[
S_{\Nu}(x, \Xxi)
=
\sum_{\bl \in (\Z^n)^N} 
\sigma_{\Nu} (x, \Xxi-2\pi \bl).
\]
Since $S_{\Nu}(x, \Xxi) = \sigma_{\Nu}(x, \Xxi)$ if $\Xxi \in \Nu + [-3, 3]^{Nn}$, we have
\[
\sigma_{\Nu}(x, \Xxi)
=
S_{\Nu}(x, \Xxi)
\prod_{j=1}^N
\widetilde{\vphi}(\xi_j-\nu_j)
\]
Furthermore, since $S_{\Nu}$ is a $2\pi \Z^{Nn}$-periodic function with respect to the $\Xxi$-variable, 
the Fourier series expansion yields that 
\[
\sigma_{\Nu}(x, \Xxi)
=
\sum_{\Mu = (\mu_1, \dots, \mu_N) \in (\Z^n)^N}
P_{\Nu, \Mu}(x)
\prod_{j=1}^N
e^{i \mu_j \cdot \xi_j}
\widetilde{\vphi}(\xi_j-\nu_j),
\]
where
\begin{align*}
P_{\Nu, \Mu}(x)
=
\frac{1}{(2\pi)^{Nn}}
\int_{\Nu + [-\pi, \pi]^{Nn}}
e^{-i \Mu \cdot \Y}
\sigma_{\Nu}(x, \Y)
\,
d\Y.
\end{align*}
It follows from  integration by parts that
\begin{align*}
&P_{\Nu, \Mu}(x)
=
\la \Mu \ra^{-2M}
Q_{\Nu, \Mu}(x),
\end{align*}
where 
\begin{align*}
&Q_{\Nu, \Mu}(x)
=
\frac{1}{(2\pi)^{Nn}}
\int_{\Nu + [-\pi, \pi]^{Nn}}
e^{-i \Mu \cdot \Y}
(I-\Delta_{\Y})^{M}
\sigma_{\Nu}(x, \Y)
\,
d\Y.
\end{align*}
We remark that, for $\alpha \in \N^n_0$, 
\begin{equation}\label{estPnu}
|\pa^\alpha_x Q_{\Nu, \Mu}(x)|
\lesssim
\la \Nu \ra^{m}
\end{equation}
holds for all $\Nu, \Mu \in (\Z^n)^N$,
since $\sigma_{\Nu}$ satisfies
\[
|
\pa_x^{\alpha}
\pa_{\Xxi}^{\boldsymbol{\beta}}
\sigma_{\Nu}(x, \Xxi)
|
\le
C_{\alpha, \boldsymbol{\beta}}
\la \Nu \ra^m,
\quad
\alpha \in \N_0^n,\ \  \boldsymbol{\beta} \in (\N_0^n)^N.
\]
Thus we can write $T_{\sigma}$ as 
\[
T_{\sigma}(f_1, \dots, f_N)(x)
=
\sum_{\Mu \in (\Z^n)^N}
\la \Mu \ra^{-2M}
\sum_{\Nu \in (\Z^n)^N}
Q_{\Nu, \Mu}(x)
\prod_{j=1}^N
\Box_{\nu_j}f_j(x+ \mu_j)
\]
Choosing the number $M$ as large as $2M \min \{1, p, q\}>Nn$, we obtain
\[
\left\| 
T_{\sigma}(f_1, \dots, f_N)
\right\|_{B^{s}_{p, q}}
\lesssim
\sup_{\Mu \in (\Z^n)^N}
\left\|
\sum_{\Nu \in (\Z^n)^N}
Q_{\Nu, \Mu}
\prod_{j=1}^N
\Box_{\nu_j}f_j(\cdot + \mu_j)
\right\|_{B^s_{p, q}}.
\]
Let $\psi_{\ell_j} \in \Sh(\R^n),\ \ell_j \in \N_0$, $j=0, 1, \dots, N$, be the same partition of unities as in the definition of Besov spaces.
We further decompose the sum on the right hand side above as follows; 
\begin{align*}
&\sum_{\Nu \in (\Z^n)^N}
Q_{\Nu, \Mu}(x)
\prod_{j=1}^N
\Box_{\nu_j}f_j(x+ \mu_j)
\\
&=
\sum_{\bl = (\ell_0, \ell_1, \dots, \ell_N) \in (\N_0)^{N+1}}
\sum_{\Nu \in (\Z^n)^N}
\psi_{\ell_0}(D)Q_{\Nu, \Mu}(x)
\prod_{j=1}^N
\Box_{\nu_j}
\psi_{\ell_j}(D)f_j(x+ \mu_j)
\\
&=
\sum_{\bl \in (\N_0)^{N+1}}
\sum_{\Nu \in (\Z^n)^N}
Q_{\ell_0, \Nu, \Mu}(x)
\prod_{j=1}^N
\Box_{\nu_j} F^{j}_{\ell_j, \mu_j}(x),
\end{align*}
where we set $Q_{\ell_0, \Nu, \Mu} = \psi_{\ell_0}(D)Q_{\Nu, \Mu}$ and $F^{j}_{\ell_j, \mu_j} = \psi_{\ell_j}(D)f_j(\cdot + \mu_j)$, $j=1, \dots, N$.

Now, we divide the sum with respect to the variable $\bl$ into the following $N$ parts:

\begin{align*}
&
\Lambda_1
= 
\left\{\bl \in (\N_0)^{N+1} 
: 
\ell_j \le \ell_1, \  
j=2, \dots, N \right\},
\\
&
\Lambda_k
= 
\left\{\bl \in (\N_0)^{N+1} 
:
\begin{array}{l}
\ell_j < \ell_k, \  j= 1, \dots, k-1,
\\ 
\ell_j \le \ell_k, \ j = k+1, \dots, N
\end{array}
\right\},
\quad
k=2, \dots, N-1,
\\
&
\Lambda_{N} = \left\{\bl \in (\N_0)^{N+1} : \ell_j < \ell_N, \ j=1, \dots, N-1 \right\}.
\end{align*}
By symmetry, it is sufficient to deal with the sum concerning with $\Lambda_1$. 
Furthermore, we divide the set $\Lambda_1$ into the following three parts;
\begin{align*}
\Lambda_1
=
&\big\{
\bl \in \Lambda_1
:
\ell_0 \ge \ell_1-3
\big\}
\\
&\cup
\big\{
\bl \in \Lambda_1
:
\ell_0 \le \ell_1-4,
\quad
\max \{\ell_2, \dots, \ell_N\} \le \ell_1-N-2
\big\}
\\
&\cup
\big\{
\bl \in \Lambda_1
:
\ell_0 \le \ell_1-4, 
\quad
\max\{ \ell_2, \dots, \ell_N \}
\ge
\ell_1-N-1
\big\}.
\end{align*}
By symmetry, it is sufficient to consider the case $\max\{\ell_2, \dots, \ell_N\} = \ell_2$.
In particular we may assume that $\ell_j \le \ell_2$, $j=3, \dots, N$.

Summarizing the above observations, 
it is sufficient to prove that the estimates
\begin{equation}\label{GOAL!!!!}
S_i :=
\left\|
\sum_{\bl \in D_{i}}
\sum_{\Nu \in (\Z^n)^N}
Q_{\ell_0, \Nu, \Mu}
\prod_{j=1}^N
\Box_{\nu_j} F^j_{\ell_j, \mu_j}
\right\|_{B^{s}_{p, q}}
\lesssim
\prod_{j=1}^N
\|f_{j}\|_{B^{s_j}_{p_j, q_j}},
\quad
i=1, 2, 3,
\end{equation}
hold with the implicit constant independent of $\Mu$, where
\begin{align*}
&
D_{1}
=
\big\{
\bl \in (\N_0)^{N+1}
:
\ell_0 \ge \ell_1-3,
\quad
\ell_j \le \ell_1, \ 
j=2, \dots, N
\big\},
\\
&
D_{2}
=
\left\{
\bl \in (\N_0)^{N+1}
:
\ell_0 \le \ell_1-4, 
\quad
\ell_2 \le \ell_1-N-2,
\quad
\ell_j \le \ell_2 \le \ell_1, \ 
j=3, \dots, N
\right\},
\\
&
D_{3}
=
\left\{
\bl \in (\N_0)^{N+1}
:
\ell_0 \le \ell_1-4, 
\quad
\ell_2 \ge \ell_1-N-1,
\quad
\ell_j \le \ell_2 \le \ell_1,\ 
j=3, \dots, N
\right\}.
\end{align*}

\begin{lem}\label{EST-x}
Let  
$m \in \R$
and $L \in \N_0$. 
If  $\sigma \in S^{m}_{0,0}(\R^n, N)$, then
\[
\|Q_{\ell_0, \Nu, \Mu}\|_{L^\infty}
\lesssim
2^{-\ell_0L}
\la \Nu \ra^{m}
\]
holds for all $\Nu, \Mu \in (\Z^n)^N$ and $\ell_0 \in \N_0$.
\end{lem}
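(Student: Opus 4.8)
The plan is to exploit the uniform bound \eqref{estPnu} on all $x$-derivatives of $Q_{\Nu,\Mu}$ together with the high-frequency localization supplied by $\psi_{\ell_0}(D)$: the whole point is that the decay factor $2^{-\ell_0 L}$ is gained purely from the frequency support of $\psi_{\ell_0}$, while \eqref{estPnu} controls the size of the function being localized. First I would record that \eqref{estPnu} gives, for every $\alpha \in \N_0^n$,
\[
\|\pa_x^\alpha Q_{\Nu,\Mu}\|_{L^\infty} \lesssim \la \Nu\ra^m
\]
uniformly in $\Nu, \Mu \in (\Z^n)^N$, with implicit constant depending only on $\alpha$. The case $L=0$ (equivalently $\ell_0=0$) is immediate: since $\psi_{\ell_0}(D)$ has convolution kernel $\F^{-1}\psi_{\ell_0}$, a Schwartz function whose $L^1$ norm is bounded uniformly in $\ell_0$, Young's inequality yields $\|Q_{\ell_0,\Nu,\Mu}\|_{L^\infty}=\|\psi_{\ell_0}(D)Q_{\Nu,\Mu}\|_{L^\infty}\lesssim\|Q_{\Nu,\Mu}\|_{L^\infty}\lesssim\la\Nu\ra^m$, which is the claim when $2^{-\ell_0 L}\approx 1$.

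For the genuine case $L\ge 1$ and $\ell_0\ge 1$, I would use the division-by-$|\xi|^{2L}$ device. On $\supp\psi_{\ell_0}\subset\{2^{\ell_0-1}\le|\xi|\le 2^{\ell_0+1}\}$ the symbol $|\xi|^{-2L}$ is smooth and bounded away from $0$, so I can factor
\[
\psi_{\ell_0}(D)=\widetilde\psi_{\ell_0}(D)\,(-\Delta)^{L},\qquad \widetilde\psi_{\ell_0}(\xi)=\psi_{\ell_0}(\xi)\,|\xi|^{-2L},
\]
using $\F[(-\Delta)^L g](\xi)=|\xi|^{2L}\widehat g(\xi)$. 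Since $(-\Delta)^L$ is a fixed linear combination of the derivatives $\pa_x^\alpha$ with $|\alpha|=2L$, the bound above gives $\|(-\Delta)^L Q_{\Nu,\Mu}\|_{L^\infty}\lesssim\la\Nu\ra^m$. It then remains to show that $\widetilde\psi_{\ell_0}(D)$ maps $L^\infty$ to $L^\infty$ with norm $\lesssim 2^{-2\ell_0 L}$, whence, since $2^{-2\ell_0 L}\le 2^{-\ell_0 L}$, the lemma follows.

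The $L^\infty\to L^\infty$ operator norm of $\widetilde\psi_{\ell_0}(D)$ is at most $\|\F^{-1}\widetilde\psi_{\ell_0}\|_{L^1}$, so the heart of the argument—and the main, though routine, technical step—is this $L^1$ kernel estimate. I would obtain it by rescaling: setting $h_{\ell_0}(\eta)=2^{2\ell_0 L}\widetilde\psi_{\ell_0}(2^{\ell_0}\eta)=\psi_{\ell_0}(2^{\ell_0}\eta)\,|\eta|^{-2L}$, the scale-invariant bounds $\|\pa^\alpha\psi_{\ell_0}\|_{L^\infty}\le C_\alpha 2^{-\ell_0|\alpha|}$ from \eqref{partition} show that $h_{\ell_0}$ is supported in the fixed annulus $\{1/2\le|\eta|\le 2\}$ with all derivatives bounded uniformly in $\ell_0$; hence $\|\F^{-1}h_{\ell_0}\|_{L^1}\lesssim 1$ uniformly. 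Undoing the dilation, $\F^{-1}\widetilde\psi_{\ell_0}(x)=2^{-2\ell_0 L}2^{\ell_0 n}(\F^{-1}h_{\ell_0})(2^{\ell_0}x)$, so that $\|\F^{-1}\widetilde\psi_{\ell_0}\|_{L^1}=2^{-2\ell_0 L}\|\F^{-1}h_{\ell_0}\|_{L^1}\lesssim 2^{-2\ell_0 L}$, as required. The only subtlety is that $\psi_{\ell_0}$ is not literally a dilate of a single bump, but the scale-invariant derivative estimates in \eqref{partition} are precisely what make the rescaled family $\{h_{\ell_0}\}$ uniformly well-behaved, so no real difficulty arises; collecting the three displays completes the proof.
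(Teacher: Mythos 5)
Your argument is correct, and it reaches the conclusion by a genuinely different (though equally standard) mechanism than the paper. The paper works on the physical side: for $\ell_0\ge 1$ the kernel $\F^{-1}\psi_{\ell_0}$ has vanishing moments, so one subtracts the degree-$(L-1)$ Taylor polynomial of $Q_{\Nu,\Mu}$, writes the remainder in integral form to produce a factor $|y|^L$, and pairs it with the pointwise decay $|\F^{-1}\psi_{\ell_0}(y)|\lesssim 2^{\ell_0 n}(1+2^{\ell_0}|y|)^{-(L+n+\epsilon)}$ to harvest $2^{-\ell_0 L}$. You instead work on the frequency side: factor $\psi_{\ell_0}(D)=\widetilde\psi_{\ell_0}(D)(-\Delta)^L$ with $\widetilde\psi_{\ell_0}(\xi)=\psi_{\ell_0}(\xi)|\xi|^{-2L}$ (legitimate since $\supp\psi_{\ell_0}$ avoids the origin for $\ell_0\ge1$ and $|\xi|^{2L}$ is a polynomial, so the multiplier identity holds on tempered distributions), push the $2L$ derivatives onto $Q_{\Nu,\Mu}$ where \eqref{estPnu} absorbs them, and extract the decay from the $L^1$ norm of $\F^{-1}\widetilde\psi_{\ell_0}$ via the rescaling permitted by the scale-invariant bounds in \eqref{partition}. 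Both proofs consume exactly the same input, namely \eqref{estPnu} for $|\alpha|\le 2L$ (resp.\ $|\alpha|=L$) together with the annular support and derivative bounds of $\psi_{\ell_0}$; your route avoids the Taylor-remainder bookkeeping and the pointwise kernel decay estimate, needing only a uniform $L^1$ bound for a rescaled bump family, and it even yields the slightly stronger decay $2^{-2\ell_0 L}$. The only cosmetic slip is the parenthetical ``the case $L=0$ (equivalently $\ell_0=0$)'': these are two distinct degenerate cases, not equivalent ones, but your Young's-inequality bound $\|Q_{\ell_0,\Nu,\Mu}\|_{L^\infty}\lesssim\la\Nu\ra^m$ disposes of both, so nothing is lost.
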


\begin{proof}
We first consider the case $\ell_0 \ge 1$. 
Since $\F^{-1}\psi_{\ell_0}$ has the moment condition, Taylor's expansion gives that 
\begin{align*}
Q_{\ell_0, \Nu, \Mu}(x)
&=
\int_{\R^n}
\F^{-1}\psi_{\ell_0}(y) Q_{\Nu, \Mu}(x-y)
\,
dy
\\
&=
\int_{\R^n}
\F^{-1}\psi_{\ell_0}(y) 
\left(
Q_{\Nu, \Mu}(x-y)
-
\sum_{|\alpha| \le L-1}
\frac{(-y)^\alpha}{\alpha !}
\pa^\alpha Q_{\Nu, \Mu}(x)
\right)
\,
dy
\\
&=
\int_{\R^n}
\F^{-1}\psi_{\ell_0}(y) 
\left(
L
\sum_{|\alpha| = L}
\frac{(-y)^\alpha}{\alpha !}
\int_0^1
(1-t)^{L-1}
[\pa^\alpha Q_{\Nu, \Mu}](x-ty)
\,
dt
\right)
\,
dy
\end{align*}
Hence, it follows from \eqref{estPnu} that
\begin{align*}
|Q_{\ell_0, \Nu, \Mu}(x)|
&\lesssim
\int_{\R^n}
|\F^{-1}\psi_{\ell_0}(y)|
\left(
\sum_{|\alpha| = L}
|(-y)^\alpha|
\int_0^1
|\pa^\alpha Q_{\Nu, \Mu}(x-ty)|
\,
dt
\right)
\,
dy
\\
&\lesssim
\la \Nu \ra^{m}
\int_{\R^n}
\frac{2^{\ell_0 n}|y|^L}{(1+2^{\ell_0}|y|)^{L+n+\epsilon}}
\,
dy
\\
&\lesssim
2^{-\ell_0L}
\la \Nu \ra^{m}.
\end{align*}
Here, we used the estimate $|\F^{-1} \psi_{\ell_0}(x)| \lesssim 2^{\ell_0n}(1+2^{\ell_0}|x|)^{-(L+n+\epsilon)}$ in the second inequality.
We obtain the same estimate for $\ell_0=0$ without using  the moment condition. The proof is complete.
\end{proof}


Now, we shall prove the estimate \eqref{GOAL!!!!}.
In what follows, we use the notation $f_{j, k} = \psi_k(D)f_j$ for $j = 1, \dots, N$ and $k \in \N_0$.

Since 
\begin{align}
&\supp \F [Q_{\ell_0, \Nu, \Mu}] \subset \{|\xi| \le 2^{\ell_0+1}\},
\label{suppP}
\\
&\supp \F [\Box_{\nu_j} F^{j}_{\ell_j, \mu_j}] \subset \nu_j + [-3, 3]^n, \quad j=1, \dots, N,
\label{suppF}
\end{align}
we have
\begin{equation} \label{suppunif}
\supp \F
\left[
Q_{\ell_0, \Nu, \Mu}
\prod_{j=1}^N
\Box_{\nu_j} F^{j}_{\ell_j, \mu_j}
\right] 
\subset
\nu_1 + \dots + \nu_N 
+ 
\left[
-2^{\ell_0+d}, 2^{\ell_0+d}
\right]^n
\end{equation} 
for some $d = d_N > 0$ depending on $N$.
Thus we have
\begin{align} \label{diag-rest}
\begin{split}
&\psi_k(D)
\left[
\sum_{\bl \in D_{i}}
\sum_{\Nu \in (\Z^n)^N}
Q_{\ell_0, \Nu, \Mu}
\prod_{j=1}^N
\Box_{\nu_j} F^j_{\ell_j, \mu_j}
\right]
\\
&=
\psi_k(D)
\left[
\sum_{\bl \in D_i}
\sum_{\Nu : \nu_1 + \dots + \nu_N \in \Lambda_{k, \ell_0}}
Q_{\ell_0, \Nu, \Mu}
\prod_{j=1}^N
\Box_{\nu_j} F^{j}_{\ell_j, \mu_j}
\right]
\end{split}
\end{align}
with
\begin{align*}
\Lambda_{k, \ell_0}
=
\{
\nu \in \Z^n
\,
:
\,
\supp \psi_k
\cap
(\nu + [-2^{\ell_0+d}, 2^{\ell_0+d}]^{n})
\neq
\emptyset
\}.
\end{align*}
We remark that $|\nu| \lesssim 2^{\ell_0+k}$ if $ \nu \in \Lambda_{k, \ell_0}$, 
and consequently $|\Lambda_{k, \ell_0}| \lesssim 2^{(\ell_0+k)n}$.
We set 
\begin{align*}
R_{\bl, k}
=
R_{\bl, k, \Mu}
=
\sum_{\Nu : \nu_1 + \dots + \nu_N \in \Lambda_{k, \ell_0}}
Q_{\ell_0, \Nu, \Mu}
\prod_{j=1}^N
\Box_{\nu_j} F^{j}_{\ell_j, \mu_j}.
\end{align*}
For $M_0 \in \R$,  we now prove that 
the following estimate holds for all $\bl = (\ell_0, \ell_1, \dots, \ell_N) \in (\N_0)^{N+1}$ and $k \in \N_0$:
\begin{align}\label{Hulk}
&\left\|
R_{\bl, k}
\right\|_{h^p}
\lesssim
2^{-\ell_0 M_0}
2^{k \alpha(p)}
2^{\ell_1m}
2^{\min\{\ell_2, k \}n/2}
\prod_{j=3}^{N}
2^{\ell_j n/2}
\prod_{j=1}^N
2^{-\ell_j\beta(p_j)}
\|f_{j, \ell_j}\|_{h^{p_j}}.
\end{align}
Here the implicit constant does not depend on $\Mu$.

Firstly, we prove that the estimate \eqref{Hulk} holds with $0< p< \infty$. 
By the embedding $W^{p_0, 2}_{\alpha(p)} \hookrightarrow W^{p, 2}_{\alpha(p)} \hookrightarrow h^p$, we have 
\begin{align*}
\|R_{\bl, k}\|_{h^p}
&\lesssim
\|R_{\bl, k}\|_{W^{p_0, 2}_{\alpha(p)}}
=
\left\|
\left\|
\langle \tau \rangle^{\alpha(p)}
\Box_{\tau}
R_{\bl, k}
\right\|_{\ell^2_{\tau}(\Z^n)}
\right\|_{L^{p_0}}.
\end{align*}
Recalling that \eqref{suppP}, \eqref{suppF} and that the function $\phi$  has compact support (see the definition of $\Box_{\tau}$),
we write
\begin{align*}
&\|R_{\bl, k}\|_{h^{p}}
\lesssim
\left\|
\left\|
\la \tau \ra^{\alpha(p)}
\Box_{\tau}
\left[
\sum_{\substack{\Nu : \nu_1 + \dots + \nu_N \in \Lambda_{k, \ell_0}
\\
|\nu_1 + \dots + \nu_N - \tau| \lesssim 2^{\ell_0}
}
}
Q_{\ell_0, \Nu, \Mu}
\prod_{j=1}^N
\Box_{\nu_j} F^{j}_{\ell_j, \mu_j}
\right]
\right\|_{\ell^2_\tau(\Z^n)}
\right\|_{L^{p_0}}
\\
&\lesssim
2^{\ell_0 n/p_0}
\sum_{|\lambda| \lesssim 2^{\ell_0}}
\left\|
\left\|
\langle \tau-\lambda \rangle^{\alpha(p)}
\Box_{\tau-\lambda}
\left[
\sum_{\Nu : \nu_1 + \dots + \nu_N  = \tau}
Q_{\ell_0, \Nu, \Mu}
\prod_{j=1}^N
\Box_{\nu_j} F^{j}_{\ell_j, \mu_j}
\right]
\right\|_{\ell^2_\tau(\Lambda_{k, \ell_0})}
\right\|_{L^{p_0}}
\\
&\lesssim
2^{\ell_0 (\alpha(p) + n/p_0)}
2^{k \alpha(p)}
\sum_{|\lambda| \lesssim 2^{\ell_0}}
\left\|
\left\|
\Box_{\tau-\lambda}
R_\tau
\right\|_{\ell^2_k(\Lambda_{k, \ell_0})}
\right\|_{L^{p_0}},
\end{align*}
where
we set
\begin{equation} \label{Nanjakore}
R_\tau
=
\sum_{\Nu : \nu_1 + \dots + \nu_N = \tau}
Q_{\ell_0, \Nu, \Mu}
\prod_{j=1}^N
\Box_{\nu_j} F^{j}_{\ell_j, \mu_j}.
\end{equation}
and used the inequality $\la \tau -\lambda \ra \lesssim 2^{\ell_0+k}$ for
$|\lambda| \lesssim 2^{\ell_0}$ and $\tau \in \Lambda_{k, \ell_0}$.

Now, by recalling \eqref{suppunif}, 
we have $\supp \F[\Box_{k-\lambda}R_\tau] \subset \{|\zeta - \tau| \lesssim 2^{\ell_0}\}$.
Hence Nikol'skij's inequality (see, e.g, \cite[Remark 1.3.2/1]{Triebel-ToFS}) gives that
\begin{align} \label{pointwiseNikolskij}
|\Box_{k-\lambda} R_\tau(x)|
\le
\|\Phi(\cdot)R_\tau(x-\cdot)\|_{L^1}
\lesssim
2^{\ell_0 n (1/r_0 -1)}
\|\Phi(\cdot)R_\tau(x-\cdot)\|_{L^{r_0}},
\end{align}
where $\Phi = \F^{-1} \phi$ and $r_0 = \min \{1, p_0\}$.
By Minkowski's inequality, we have
\begin{align*}
\left\|
\left\|
\Box_{k-\lambda}
R_\tau(x)
\right\|_{\ell^2_\tau(\Lambda_{k, \ell_0})}
\right\|_{L^{p_0}_x}
&\lesssim
2^{\ell_0 n (1/r_0-1)}
\left\|
\left\|
\left\|
\Phi(y)R_\tau(x-y)
\right\|_{L^{r_0}_y}
\right\|_{\ell^2_\tau(\Lambda_{k, \ell_0})}
\right\|_{L^{p_0}_x}
\\
&\lesssim
2^{\ell_0 n (1/r_0-1)}
\left\|
\left\|
R_\tau(x)
\right\|_{\ell^2_\tau(\Lambda_{k, \ell_0})}
\right\|_{L^{p_0}_x}.
\end{align*}
Hence, by applying Lemma \ref{EST-x} with $L$ replaced by 
$M_0+\alpha(p) + n/p_0 +n/r_0 +n/2$, 
we obtain
\begin{align*}
\|R_{\bl, k}\|_{h^{p}}
&\lesssim
2^{-\ell_0(M_0+n/2)}
2^{k \alpha(p)}
\left\|
\left\|
\sum_{\Nu : \nu_1 + \dots + \nu_N  = \tau}
\la \Nu \ra^{m}
\prod_{j=1}^N
\left|
 \Box_{\nu_j} F^{j}_{\ell_j, \mu_j}
\right|
\right\|_{\ell^2_\tau(\Lambda_{k, \ell_0})}
\right\|_{L^{p_0}}.
\end{align*}
If $\supp \varphi(\cdot-\nu_j) \cap \supp \psi_{\ell_j} = \emptyset$, then $\Box_{\nu_j} F^{j}_{\ell_j, \mu_j} = 0$.
Hence the sum over the $\nu_j$-variable can be restricted to 
\[
\Lambda_{\ell_j}
=
\{\nu_j \in \Z^n
:
\supp \varphi(\cdot-\nu_j) 
\cap 
\supp \psi_{\ell_j} 
\neq 
\emptyset
\},
\quad
j=1, \dots, N.
\]
Notice that $|\nu_j| \approx 2^{\ell_j}$ if $\nu_j \in \Lambda_{\ell_j}$.
Furthermore, since $\la \Nu \ra \approx 2^{\ell_1}$ for all $\Nu = (\nu_1, \dots, \nu_N) \in \Lambda_{\ell_1} \times \dots \times \Lambda_{\ell_N}$
if $\ell_1 \ge \ell_j$, $j=2, \dots, N$,
we have
\begin{align*} 
\sum_{\Nu : \nu_1 + \dots + \nu_N = \tau}
\la \Nu \ra^{m}
\prod_{j=1}^N
\left|
\Box_{\nu_j} F^{j}_{\ell_j, \mu_j}
\right|
\lesssim
2^{\ell_1 m}
\sum_{\substack{\Nu \in \Lambda_{\ell_1} \times \dots \times \Lambda_{\ell_N}
\\ 
\nu_1 + \dots + \nu_N = \tau}}
\prod_{j=1}^N
\left|
\Box_{\nu_j} F^{j}_{\ell_j, \mu_j}
\right|,
\end{align*}
Recalling that $|\nu_j| \approx 2^{\ell_j}$ if $\nu_j \in \Lambda_{\ell_j}$
and $|\tau| \lesssim 2^{\ell_0 + k}$ if $\tau \in \Lambda_{k, \ell_0}$, 
we have by Proposition \ref{Keyprop}
\begin{align*}
\left\|
\left\|
\sum_{\substack{\Nu \in \Lambda_{\ell_1} \times \dots \times \Lambda_{\ell_N}
\\ 
\nu_1 + \dots + \nu_N = \tau}}
\prod_{j=1}^N
\left|
\Box_{\nu_j} F^{j}_{\ell_j, \mu_j}
\right|
\right\|_{\ell^2_\tau(\Lambda_{k, \ell_0})}
\right\|_{L^{p_0}}
&\lesssim
2^{\min\{\ell_2, \ell_0+k\}n/2}
\prod_{j=3}^N
2^{\ell_jn/2}
\prod_{j=1}^N
2^{-\ell_j\beta(p_j)}
\|F^{j}_{\ell_j, \mu_j}\|_{h^{p_j}}
\\
&\le
2^{\ell_0 n/2}
2^{\min\{\ell_2, k \}n/2}
\prod_{j=3}^N
2^{\ell_jn/2}
\prod_{j=1}^N
2^{-\ell_j\beta(p_j)}
\left\|f_{j, \ell_j}\right\|_{h^{p_j}},
\end{align*}
where we used the fact that the $h^{p_j}$-norms are translation invariant in the last inequality.
Collecting the above estimates, we obtain \eqref{Hulk} with $0 < p < \infty$.

Next we shall show that  the estimate \eqref{Hulk} holds with $p=\infty$.
Notice that $\alpha(\infty) = n/2$.
By the embedding relation $ W^{p_0, 1} \hookrightarrow W^{\infty, 1} \hookrightarrow L^{\infty}$,
we have
\[
\left\|
R_{\bl, k}
\right\|_{L^\infty}
\lesssim
\left\|
R_{\bl, k}
\right\|_{W^{p_0, 1}}
=
\left\|
\left\|
\Box_\tau
R_{\bl, k}
\right\|_{\ell^1_\tau}
\right\|_{L^{p_0}}.
\]
By the same argument as in the case $0< p < \infty$, it holds that
\begin{align*}
\left\|
\left\|
\Box_\tau
R_{\bl, k}
\right\|_{\ell^1_\tau}
\right\|_{L^{p_0}}
&=
\left\|
\left\|
\Box_\tau
\left[
\sum_{\substack{\Nu : \nu_1 + \dots + \nu_N \in \Lambda_{k, \ell_0}
\\
|\nu_1 + \dots + \nu_N - \tau| \lesssim 2^{\ell_0}
}}
Q_{\ell_0, \Nu, \Mu}
\prod_{j=1}^N
\left|
\Box_{\nu_j} F^{j}_{\ell_j, \mu_j}
\right|
\right]
\right\|_{\ell^1_\tau}
\right\|_{L^{p_0}}
\\
&\lesssim
2^{\ell_0 n/p_0}
\sum_{|\lambda| \lesssim 2^{\ell_0}}
\left\|
\left\|
\Box_{\tau-\lambda}
R_\tau
\right\|_{\ell^1_\tau(\Lambda)}
\right\|_{L^{p_0}}
\end{align*}
with $R_\tau$ given by \eqref{Nanjakore}.
Furthermore, it follows from \eqref{pointwiseNikolskij} that
\begin{align*}
\left\|
\left\|
\Box_{\tau-\lambda}
R_\tau
\right\|_{\ell^1_\tau(\Lambda)}
\right\|_{L^{p_0}}
\lesssim
2^{\ell_0 n (1/r_0-1)}
\left\|
\left\|
R_\tau
\right\|_{\ell^1_\tau(\Lambda)}
\right\|_{L^{p_0}}.
\end{align*}
Combining these estimates with Lemma \ref{EST-x} with $L$ replaced by 
$M_0+ n/p_0 +n/r_0 +n$, we obtain 
\begin{align*}
\left\|
R_{\bl, k}
\right\|_{L^\infty}
&\lesssim
2^{-\ell_0(M_0+n)}
\left\|
\left\|
\sum_{\Nu : \nu_1 + \dots + \nu_N  = \tau}
\la \Nu \ra^{m}
\prod_{j=1}^N
\left|
\Box_{\nu_j} F^{j}_{\ell_j, \mu_j}
\right|
\right\|_{\ell^1_\tau(\Lambda)}
\right\|_{L^{p_0}}
\\
&\lesssim
2^{-\ell_0(M_0+n)}
2^{\ell_1 m}
\left\|
\left\|
\sum_{\substack{\Nu \in \Lambda_{\ell_1} \times \dots \times \Lambda_{\ell_N}
\\ 
\nu_1 + \dots + \nu_N = \tau}}
\prod_{j=1}^N
\left|
\Box_{\nu_j} F^{j}_{\ell_j, \mu_j}
\right|
\right\|_{\ell^1_\tau(\Lambda)}
\right\|_{L^{p_0}}.
\end{align*}
Hence, it follows from the estimate in Remark \ref{Keyrem} that
the right hand side just above can be estimated by
\begin{align*}
&
2^{-\ell_0(M_0+n)}
2^{\ell_1 m}
\times
2^{(\ell_0+j)n/2}
2^{\min \{\ell_2, \ell_0+j\} n/2}
\times
2^{-\ell_1\beta(p_1)}
\|f_{1, \ell_1}\|_{h^{p_1}}
\times
2^{-\ell_2\beta(p_2)}
\|f_{2, \ell_2}\|_{h^{p_2}}
\\
&\le
2^{-\ell_0 M_0}
2^{jn/2}
2^{\ell_1 m}
2^{\min \{\ell_2, j\}n/2}
\times
2^{-\ell_1\beta(p_1)}
\|f_{1, \ell_1}\|_{h^{p_1}}
\times
2^{-\ell_2\beta(p_2)}
\|f_{2, \ell_2}\|_{h^{p_2}}.
\end{align*}
The proof of \eqref{Hulk} is complete.

Now, we shall return to the proof the estimate \eqref{GOAL!!!!}.
By the embedding relation \eqref{Bq1Bq2}, 
it is sufficient to prove \eqref{GOAL!!!!} with $0 < q, q_1, \dots, q_N \le \infty$ 
satisfying $1/q = \sum_{j=1}^N 1/q_j$.
We set $r=\min\{1, p, q\}$.

\bigskip
\noindent
\textbf{Estimate for $S_1$ :}
If $\ell_0 \ge \ell_1-3$ and $\ell_1 \ge \ell_j$, $j=2, \dots, N$, then we have
\begin{equation*}
\supp
\F
\left[
Q_{\ell_0, \Nu, \Mu}
\prod_{j=1}^N
\Box_{\nu_j} F^j_{\ell_j, \mu_j}
\right]
\subset
\big\{
 |\zeta| \le 2^{\ell_0+a_N}
\big\}
\end{equation*}
with some positive integer $a_N$ depending only on $N$.
Hence 
\[
\psi_k(D) R_{\bl, k} = 0
\quad 
\text{if}
\quad 
\ell_0 \le k-1-a_N,
\]
and consequently, 
\begin{align} \label{psildel}
\left\|
\psi_k(D) \sum_{\bl \in D_1} R_{\bl, k}
\right\|_{L^{p}}^r
\le
\sum_{\substack{\bl \in D_1 \\ \ell_0 \ge k-a_N}}
\left\|
\psi_k(D)
R_{\bl, k}
\right\|_{L^{p}}^r
\lesssim
\sum_{
\substack{
\bl \in D_1 
\\
\ell_0 \ge k-a_N
}}
\left\|
R_{\bl, k}
\right\|_{h^p}^r,
\end{align}
where we used the estimate \eqref{embd-hpB0pinfty} in $\lesssim$.
Then, by \eqref{Hulk}, the right hand side above is estimated by
\begin{align*}
&
\sum_{
\substack{
\bl \in D_1 
\\
\ell_0 \ge k-a_N
}}
\Big(
2^{-\ell_0 M_0}
2^{k \alpha(p)}
2^{\ell_1 m}
\prod_{j=2}^N
2^{\ell_j n/2}
\prod_{j=1}^N
2^{-\ell_j\beta(p_j)}
\left\| f_{j, \ell_j} \right\|_{h^{p_j}}
\Big)^{r}
=
2^{k \alpha(p) r} U_k,
\end{align*}
where 
\[
U_k
=
\sum_{\ell_0 : \ell_0 \ge k - a_N}
2^{-\ell_0 M_0 r}
\sum_{\ell_1 : \ell_1 \le \ell_0+3}
2^{\ell_1 (m - \beta(p_1)) r}
\|f_{1, \ell_1}\|_{h^{p_1}}^{r}
\prod_{j=2}^N
\sum_{\ell_j : \ell_j \le \ell_1}
2^{\ell_j\theta(p_j) r}
\left\| f_{j, \ell_j} \right\|_{h^{p_j}}^{r}
\]
and
\begin{align}\label{MsMarvel}
\theta(p_j)
=
(n/2)
- \beta(p_j)
=
\max\{n/p_j, n/2\}.
\end{align}
Notice that we have $\left\| f_{j, \ell_j} \right\|_{h^{p_j}} = \|\psi_{\ell_j}(D)f_j\|_{h^{p_j}} \lesssim 2^{-\ell_j s_j} \|f_j\|_{B^{s_j}_{p_j, q_j}}$ 
by \eqref{Bhpequiv} and \eqref{Bq1Bq2}.
Hence we have
\begin{align*}
U_k
&\lesssim
\left(
\sum_{\ell_0 : \ell_0 \ge k - a_N}
2^{-\ell_0 M_0 r}
\sum_{\ell_1 : \ell_1 \le \ell_0+3}
2^{\ell_1(m -\beta(p_1)-s_1) r}
\prod_{j=2}^N
\sum_{\ell_j : \ell_j \le \ell_1}
2^{\ell_j(\theta(p_j) - s_j)r}
\right)
\prod_{j=1}^N
\|f_{j}\|_{B^{s_j}_{p_j, q_j}}^r
\\
&\lesssim
\left(
\sum_{\ell_0 : \ell_0 \ge k-a_N}
2^{-\ell_0(M_0-C) r}
\right)
\prod_{j=1}^N
\|f_{j}\|_{B^{s_j}_{p_j, q_j}}^r.
\end{align*}
Here $C > 0$ is the sufficiently large number satisfying
\[
\sum_{\ell_1 : \ell_1 \le \ell_0+3}
2^{\ell_1(m -\beta(p_1)-s_1) r}
\prod_{j=2}^N
\sum_{\ell_j : \ell_j \le \ell_1}
2^{\ell_j(\theta(p_j) - s_j)r}
\le
2^{\ell_0 C r}.
\]
Choosing $M_0$ sufficiently large, we obtain
\begin{align*}
S_1
&\lesssim
\left\|
2^{k s}
\left\|
\psi_k(D)
\sum_{\bl \in D_1}
R_{\bl, k}
\right\|_{L^p_{}}
\right\|_{\ell^q_k
}
\\
&\lesssim
\left\|
2^{k (s +\alpha(p))}
\left(
\sum_{\ell_0 : \ell_0 \ge k-a_N}
2^{-\ell_0(M_0-C) r}
\right)^{1/r}
\right\|_{\ell^q_k}
\prod_{j=1}^N
\|f_{j}\|_{B^{s_j}_{p_j, q_j}}
\lesssim
\prod_{j=1}^N
\|f_{j}\|_{B^{s_j}_{p_j, q_j}}.
\end{align*}
The estimate for $S_1$ is complete.

\bigskip
\noindent
\textbf{Estimate for $S_2$ :}
If $\ell_0 \le \ell_1-4$ and $\ell_j \le \ell_1-N-2$ for all $j =2, \dots, N$, then
\begin{equation*}
\supp
\F
\left[
Q_{\ell_0, \Nu, \Mu}
\prod_{j=1}^N
\Box_{\nu_j} F^j_{\ell_j, \mu_j}
\right]
\subset
\big\{
2^{\ell_1-b_N} \le |\zeta| \le 2^{\ell_1+b_N}
\big\}.
\end{equation*}
with some positive integer $b_N > 0$ depending only on $N$.
Thus we see that 
\[
\psi_k(D) R_{\bl, k} = 0
\quad
\text{if}
\quad
|\ell_1 - k| \ge b_N + 1.
\]
Hence it follows from the same argument as in \eqref{psildel} that
\begin{align*}
\left\|
\psi_k(D) \sum_{\bl \in D_2} R_{\bl, k}
\right\|_{L^{p}}^r
&\lesssim
\sum_{
\substack{
\bl \in D_{2}
\\
|\ell_1-k| \le b_N
}
}
\left\|
R_{\bl, k}
\right\|_{h^{p}}^r.
\end{align*}
By using \eqref{Hulk} and taking the sum over $\ell_0$,  we have 
\begin{align*}
\sum_{
\substack{
\bl \in D_{2}
\\
|\ell_1-k| \le b_N
}
}
\left\|
R_{\bl, k}
\right\|_{h^{p}}^r
&\lesssim
2^{k \alpha(p) r}
V_k,
\end{align*}
where
\begin{align*}
V_k
&=
\sum_{\ell_1 : |\ell_1-k| \le b_N}
2^{\ell_1(m - \beta(p_1)) r}
\|f_{1, \ell_1}\|_{h^{p_1}}^{r}
\prod_{j=2}^N
\sum_{\ell_j : \ell_j \le \ell_1}
2^{\ell_j\theta(p_j) r}
\left\| f_{j, \ell_j} \right\|_{h^{p_j}}^{r}
\\
&=
\sum_{\ell : |\ell|\le b_N}
2^{(k+\ell)(m-\beta(p_1))r}
\|f_{1, k+\ell}\|_{h^{p_1}}^{r}
\prod_{j=2}^N
\sum_{\ell_j : \ell_j  \le k+\ell}
2^{\ell_j \theta(p_j) r}
\left\| f_{j, \ell_j} \right\|_{h^{p_j}}^{r}.
\end{align*}
Thus, we have
\begin{align*}
S_2
&\lesssim
\left\|
2^{k s}
\left\|
\psi_k(D) \sum_{\bl \in D_2} R_{\bl, k}
\right\|_{L^{p}_{}}
\right\|_{\ell^q_k}
\\
&\lesssim
\sum_{\ell : |\ell|\le b_N}
\left\|
2^{k (s +\alpha(p))}
2^{(k+\ell)(m-\beta(p_1))}
\|f_{1, k+\ell}\|_{h^{p_1}}
\prod_{j=2}^N
\left(
\sum_{\ell_j : \ell_j  \le k+\ell}
2^{\ell_j \theta(p_j) r}
\left\| f_{j, \ell_j} \right\|_{h^{p_j}}^{r}
\right)^{1/r}
\right\|_{\ell^q_k}
\\
&\le
\sum_{\ell : |\ell|\le b_N}
2^{-\ell (s +\alpha(p))}
\left\|
2^{k(m-\beta(p_1) +s +\alpha(p))}
\|f_{1, k}\|_{h^{p_1}}
\prod_{j=2}^N
\left(
\sum_{\ell_j : \ell_j \le k}
2^{\ell_j \theta(p_j) r}
\left\| f_{j, \ell_j} \right\|_{h^{p_j}}^{r}
\right)^{1/r}
\right\|_{\ell^q_k}
\\
&\lesssim
\left\|
2^{k(m-\beta(p_1) +s +\alpha(p))}
\|f_{1, k}\|_{h^{p_1}}
\prod_{j=2}^N
\left(
\sum_{\ell_j : \ell_j \le k}
2^{\ell_j \theta(p_j) r}
\left\| f_{j, \ell_j} \right\|_{h^{p_j}}^{r}
\right)^{1/r}
\right\|_{\ell^q_k}
\end{align*}
where we change a variable with respect to $\ell$ in the third inequality.
Since $\alpha(p) - n/2 =-\min\{n/p, n/2\},$
we have
$m -\beta(p_1) + s + \alpha(p) = s_1 + \sum_{j=2}^N (s_j -\theta(p_j))$.
Hence we obtain
\begin{align*}
S_2
&\lesssim
\left\|
2^{k s_1}
\|f_{1, k}\|_{h^{p_1}}
\prod_{j=2}^N
\left(
\sum_{\ell_j : \ell_j \le k}
2^{(k-\ell_j)(s_j - \theta(p_j)) r}
2^{\ell_j s_j r}
\left\| f_{j, \ell_j} \right\|_{h^{p_j}}^{r}
\right)^{1/r}
\right\|_{\ell^q_k}
\\
&\le
\left\|
2^{k s_1}
\|f_{1, k}\|_{h^{p_1}_{}}
\right\|_{\ell^{q_1}_k}
\prod_{j=2}^N
\left\|
\left(
\sum_{\ell_j : \ell_j \le k}
2^{(k-\ell_j) (s_j -\theta(p_j)) r}
2^{\ell_j s_j r}
\left\| f_{j, \ell_j} \right\|_{h^{p_j}_{}}^{r}
\right)^{1/r}
\right\|_{\ell^{q_j}_k}
\\
&\approx
\|f_{1}\|_{B^{s_1}_{p_1, q_1}}
\prod_{j=2}^N
\left\|
\sum_{\ell_j : \ell_j \le k}
2^{(k-\ell_j) (s_j -\theta(p_j)) r}
2^{\ell_j s_j r}
\left\| f_{j, \ell_j} \right\|_{h^{p_j}}^{r}
\right\|_{\ell^{q_j/r}_k}^{1/r},
\end{align*}
where we used \eqref{Bhpequiv} in the last inequality.
Since $q_j \ge q \ge r$ and $s_j  -\theta(p_j) <0$, $j=2, \dots, N$, it follows from Young's inequality that
\begin{align*}
\left\|
\sum_{\ell_j : \ell_j \le k}
2^{(k-\ell_j) (s_j -\theta(p_j)) r}
2^{\ell_j s_j r}
\left\| f_{j, \ell_j} \right\|_{h^{p_j}}^{r}
\right\|_{\ell^{q_j/r}_k}^{1/r}
&\le
\left\|
2^{k (s_j -\theta(p_j))r}
\right\|_{\ell^1_k}^{1/r}
\left\|
2^{k s_j r}
\left\|f_{j, k}\right\|_{h^{p_j}_{}}^r
\right\|_{\ell^{q_j/r}_k}^{1/r}
\\
&\lesssim
\left\|
2^{k s_j}
\left\|f_{j, k}\right\|_{h^{p_j}_{}}
\right\|_{\ell^{q_j}_k}
\approx
\|f_{j}\|_{B^{s_j}_{p_j, q_j}},
\quad
j=2, \dots, N,
\end{align*}
where we used Proposition \ref{propQui} in the last inequality.
Combining these estimates, we obtain the desired estimate. 

\bigskip
\noindent
\textbf{Estimate for $S_3$ :}
Since if $\ell_0 \le \ell_1-4$ and $\ell_1 \ge \ell_j$, $j =2, \dots, N$, then
\begin{equation*}
\supp
\F
\left[
Q_{\ell_0, \Nu, \Mu}
\prod_{j=1}^N
\Box_{\nu_j} F^j_{\ell_j, \mu_j}
\right]
\subset
\big\{
 |\zeta| \le 2^{\ell_1 + c_N}
\big\}
\end{equation*}
with some positive integer $c_N$ depending only on $N$.
Hence
\[
\psi_k(D) R_{\bl, k} = 0
\quad
\text{if}
\quad 
\ell_1 \le k-1-c_N,
\]
and, by the same argument as above,
\begin{align*}
\left\|
\psi_k(D) \sum_{\bl \in D_3} R_{\bl, k}
\right\|_{L^{p}}^r
&\lesssim
\sum_{
\substack{
\bl \in D_{3}
\\
\ell_1 \ge k-c_N
}
}
\left\|
R_{\bl, k}
\right\|_{h^{p}}^r.
\end{align*}
Since
$
\alpha(p) 
+ 
(n/2)
= 
\max\{n/p^{\prime}, n/2\}
=
\theta(p^{\prime}),
$
it follows from \eqref{Hulk} and \eqref{MsMarvel} that
\begin{align*}
&\left\|
\psi_k(D) \sum_{\bl \in D_3} R_{\bl, k}
\right\|_{L^{p}}^r
\lesssim
2^{k \theta(p^{\prime}) r}
W_k,
\end{align*}
where
\begin{align*}
W_k=
&\sum_{\ell_1 : \ell_1 \ge k-c_N}
2^{\ell_1(m-\beta(p_1)) r}
\|f_{1, \ell_1}\|_{h^{p_1}}^{r}
\\
&\qquad\qquad\qquad
\times
\sum_{\ell_2 : \ell_1-N-1 \le \ell_2 \le \ell_1}
2^{-\ell_2\beta(p_2) r}
\|f_{2, \ell_2}\|_{h^{p_2}}^{r}
\prod_{j=3}^N
\sum_{\ell_j : \ell_j \le \ell_1}
2^{\ell_j\theta(p_j) r}
\left\| f_{j, \ell_j} \right\|_{h^{p_j}}^{r}.
\end{align*}
By a change of variable with respect to $\ell_2$, we have
\begin{align*}
W_k
=
W_{k, 0} + W_{k, 1} + \dots + W_{k, N+1}
=
\sum_{i=0}^{N+1}
W_{k, i}
\end{align*}
with
\begin{align*}
W_{k, i}
=
\sum_{\ell_1 : \ell_1 \ge k-c_N}
2^{\ell_1 (m-\beta(p_1)) r}
2^{-(\ell_1 -i) \beta(p_2) r}
\|f_{1, \ell_1}\|_{h^{p_1}}^{r}
\|f_{2, \ell_1 -i}\|_{h^{p_2}}^{r}
\prod_{j=3}^N
\sum_{\ell_j : \ell_j \le \ell_1}
2^{\ell_j \theta(p_j) r}
\left\| f_{j, \ell_j} \right\|_{h^{p_j}}^{r}.
\end{align*}
Then we have
\begin{align*}
S_3
\lesssim
\left\|
2^{k s}
\left\|
\psi_k(D)
 \sum_{\bl \in D_3} R_{\bl, k}
\right\|_{L^p_{}}
\right\|_{\ell^q_k}
\lesssim
\sum_{i=0}^{N+1}
\left\|
2^{k (s + \theta(p^{\prime}))}
W^{1/r}_{k, i}
\right\|_{\ell^q_k}.
\end{align*}
It is sufficient to prove the estimate for $W_{k,0}$, 
The same argument below works for the other terms.
By using the notation $\theta(p_j)$ given in \eqref{MsMarvel}, we can write
$m -\beta(p_1) -\beta(p_2) = -s -\theta(p^{\prime}) 
+s_1+s_2
+\sum_{j=3}^N (s_j - \theta(p_j) )
$.
Hence
\begin{align*}
W_{k, 0}
&=
\sum_{\ell_1 : \ell_1 \ge k-c_N}
2^{-\ell_1(s+\theta(p^{\prime}))r}
\prod_{j=1}^2
2^{\ell_1 s_j r} \|f_{j, \ell_1}\|_{h^{p_j}}^r
\prod_{j=3}^N
\sum_{\ell_j : \ell_j \le \ell_1}
2^{(\ell_1-\ell_j)(s_j - \theta(p_j)) r}
2^{\ell_j s_j r}
\|f_{j, \ell_j}\|_{h^{p_j}}^{r}
\\
&= 
\sum_{\ell_1 : \ell_1 \ge k-c_N}
2^{-\ell_1(s+\theta(p^{\prime}))r}
\widetilde{W}_{\ell_1}, 
\end{align*}
where
\begin{align*}
\widetilde{W}_{\ell_1}
=
\prod_{j=1}^2
2^{\ell_1 s_j r} \|f_{j, \ell_1}\|_{h^{p_j}}^r
\prod_{j=3}^N
\sum_{\ell_j : \ell_j \le \ell_1}
2^{(\ell_1-\ell_j)(s_j - \theta(p_j)) r}
2^{\ell_j s_j r}
\|f_{j, \ell_j}\|_{h^{p_j}}^{r}.
\end{align*}
Since $q \ge r$, it follows from Young's inequality that
\begin{align*}
\left\|
2^{k (s +\theta(p^{\prime}))}
W_{k, 0}^{1/r}
\right\|_{\ell^q_k}
&=
\left\|
\sum_{\ell_1 : \ell_1 \ge k-c_N}
2^{-(\ell_1-k)(s+\theta(p^{\prime}))r}
\widetilde{W}_{\ell_1}
\right\|_{\ell^{q/r}_k}^{1/r}
\\
&\lesssim
\left\|
\sum_{\ell_1 \in  \N_0}
2^{-|\ell_1-k|(s+\theta(p^{\prime}))r}
\widetilde{W}_{\ell_1}
\right\|_{\ell^{q/r}_k}^{1/r}
\\
&\le
\left\|
2^{-k(s+\theta(p^{\prime}))r}
\right\|_{\ell^1_k}^{1/r}
\|
\widetilde{W}_{k}
\|_{\ell^{q/r}_k}^{1/r}
\lesssim
\|
\widetilde{W}_{k}
\|_{\ell^{q/r}_k}^{1/r},
\end{align*}
where we used the assumption $s+ \theta(p^{\prime}) > 0$ in the last inequality. 
Furthermore, 
by H\"older's inequality and  the same argumet as in  the estimate for $S_2$, we obtain
\begin{align*}
\|
\widetilde{W}_{k}
\|_{\ell^{q/r}_k}^{1/r}
&\lesssim
\prod_{j=1}^2
\left\|
2^{k s_j r} \left\|f_{j, k}\right\|_{h^{p_j}_{}}^r
\right\|_{\ell^{q_j/r}_k}^{1/r}
\prod_{j=3}^N
\left\|
\sum_{\ell_j : \ell_j \le k}
2^{(k-\ell_j)(s_j - \theta(p_j)) r}
2^{\ell_j s_j r}
\left\| f_{j, \ell_j} \right\|_{h^{p_j}_{}}^{r}
\right\|_{\ell^{q_j/r}_k}^{1/r}
\\
&\lesssim
\prod_{j=1}^N
\|f_j\|_{B^{s_j}_{p_j, q_j}}.
\end{align*}

\bigskip
Thus we obtain the estimate \eqref{GOAL!!!!}. The proof of Theorem \ref{main1} is complete.

\section{Proof of Theorem \ref{thmnec}}

In this section, we shall show the sharpness of conditions of $s_1, \dots, s_N$ and $s$.
In particular, we shall give the proof of Theorem \ref{thmnec}.

We now assume that the boundedness
\begin{equation} \label{nec-bdd}
\Op(S^m_{0,0}(\R^n, N))
\subset 
B(B^{s_1}_{p_1, q_1} \times \dots \times B^{s_N}_{p_N, q_N} \to B^s_{p, q})
\end{equation}
holds with 
\begin{equation}\label{mcrit}
m = \min \left\{ \frac{n}{p}, \frac{n}{2} \right\} 
-
\sum_{j=1}^N \max\left\{ \frac{n}{p_j}, \frac{n}{2} \right\}
+
\sum_{j=1}^N s_j  -s.
\end{equation}
By the closed graph theorem, the assumption \eqref{nec-bdd} implies that there exists
$M \in \N$ such that
\begin{align}\label{operator-norm}
\begin{split}
&\|T_{\sigma}\|_{B^{s_1}_{p_1, q_1} \times \dots \times B^{s_N}_{p_N, q_N} \to B^{s}_{p, q}}
\\
&\quad
\lesssim
\max_{|\alpha|, |\beta_1|, \dots, |\beta_N| \le M}
\|
\la (\xi_1, \dots, \xi_N)\ra^{-m}
\pa_x^{\alpha}\pa_{\xi_1}^{\beta_1} \dots \pa_{\xi_N}^{\beta_N}
\sigma(x, \xi_1, \dots, \xi_N)
\|_{L^\infty_{x, \xi_1, \dots, \xi_N}}
\end{split}
\end{align}
for all $\sigma \in S^{m}_{0, 0}(\R^n, N)$. 
For the argument using the closed graph theorem, see \cite[Lemma 2.6]{BBMNT}.

We recall the following fact given 
by Wainger \cite{Wainger} and Miyachi-Tomita \cite{MT-IUMJ}.
\begin{lem}[\cite{Wainger, MT-IUMJ}]\label{lemWainger}
Let $0< a < 1$, $0< b < n$, $1 \le p \le \infty$ and $\varphi \in \Sh(\R^n)$.  
For $\epsilon > 0$, we set
\[
f_{a, b, \epsilon}(x)
=
\sum_{\nu \in \Z^n \setminus \{0\}}
e^{-\epsilon|\nu|}
|\nu|^{-b} e^{i|\nu|^a} e^{i\nu \cdot x} \varphi(x).
\]
If $b > (1-a)(n/2-n/p) +n/2$, then $\sup_{\epsilon >0}\|f_{a, b, \epsilon}\|_{L^p} < \infty$.
\end{lem}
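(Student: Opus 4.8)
The plan is to exploit the hidden $2\pi\Z^n$-periodicity of $f_{a,b,\epsilon}$. Write $f_{a,b,\epsilon}(x)=g_\epsilon(x)\varphi(x)$, where $g_\epsilon(x)=\sum_{\nu\in\Z^n\setminus\{0\}}e^{-\epsilon|\nu|}|\nu|^{-b}e^{i|\nu|^a}e^{i\nu\cdot x}$ is $2\pi\Z^n$-periodic. First I would reduce the global $L^p$-bound to a bound on a single fundamental cube $Q=[-\pi,\pi]^n$: tiling $\R^n$ by the translates $Q+2\pi k$, using periodicity, and summing the Schwartz tails of $\varphi\in\Sh(\R^n)$ gives $\|f_{a,b,\epsilon}\|_{L^p}^p=\int_Q|g_\epsilon(y)|^p\bigl(\sum_{k\in\Z^n}|\varphi(y+2\pi k)|^p\bigr)\,dy\lesssim\int_Q|g_\epsilon(y)|^p\,dy$, with the obvious modification for $p=\infty$. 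Thus it suffices to bound $\|g_\epsilon\|_{L^p(Q)}$ uniformly in $\epsilon$.

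Next I would run a smooth dyadic decomposition in frequency, writing $g_\epsilon=\sum_{j\ge0}g_{\epsilon,j}$ with $g_{\epsilon,j}(x)=\sum_\nu\chi(|\nu|/2^j)e^{-\epsilon|\nu|}|\nu|^{-b}e^{i|\nu|^a}e^{i\nu\cdot x}$ for a bump $\chi$ supported in $\{1/2\le|\xi|\le2\}$. Applying the Poisson summation formula to each block turns the lattice sum into $g_{\epsilon,j}(x)=\sum_{k\in\Z^n}I_j(x-2\pi k)$, where $I_j(y)=\int_{\R^n}\chi(|\xi|/2^j)e^{-\epsilon|\xi|}|\xi|^{-b}e^{i(|\xi|^a+\xi\cdot y)}\,d\xi$. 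On $Q$ the only singularity of the limiting kernel sits at $x=0$ and comes from the $k=0$ term; the terms $k\neq0$ have $|x-2\pi k|\gtrsim1$ and will turn out to contribute a uniformly bounded quantity.

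The heart of the matter, following the stationary phase method of Wainger and Miyachi--Tomita, is the oscillatory integral $I_j(y)$ with phase $\Phi(\xi)=|\xi|^a+\xi\cdot y$ on $|\xi|\approx R:=2^j$. Since $\nabla\Phi=a|\xi|^{a-2}\xi+y$, a stationary point occurs only when $|y|\approx R^{a-1}$, that is, at the critical scale $R_\ast\approx|y|^{-1/(1-a)}$. Away from this scale I would integrate by parts repeatedly, using that the amplitude is a symbol of order $-b$ at scale $R$ with bounds uniform in $\epsilon$ (because $e^{-\epsilon|\xi|}$ behaves like a harmless symbol when $\epsilon R\lesssim1$ and supplies extra decay when $\epsilon R\gtrsim1$), obtaining rapid summable decay in $|j-j_\ast|$. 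At the critical scale the Hessian of $|\xi|^a$ has determinant $\approx R^{n(a-2)}$, so stationary phase yields $|I_{j_\ast}(y)|\lesssim R_\ast^{-b}R_\ast^{n(2-a)/2}=|y|^{-\gamma}$ with $\gamma=\frac{1}{1-a}\bigl(\frac{n(2-a)}{2}-b\bigr)$. Summing over $j$ then gives $|g_\epsilon(x)|\lesssim|x|^{-\gamma}$ near the origin of $Q$, uniformly in $\epsilon$, plus a bounded contribution from the scales where $\gamma\le0$ and from the translates $k\neq0$.

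Finally, integrating this pointwise bound over $Q$ gives $\int_Q|g_\epsilon|^p\lesssim\int_{|x|\le\pi}|x|^{-\gamma p}\,dx+C$, which is finite precisely when $\gamma p<n$, i.e.\ $\gamma<n/p$; a direct computation shows that $\gamma<n/p$ is equivalent to $b>(1-a)(n/2-n/p)+n/2$, exactly the hypothesis (and for $p=\infty$ the hypothesis forces $\gamma<0$, making $g_\epsilon$ bounded). I expect the \emph{main obstacle} to be the oscillatory-integral analysis of the third step: executing the stationary phase estimate cleanly at the critical scale and, above all, establishing the non-stationary decay uniformly in both $\epsilon$ and $y$, so that the dyadic blocks sum and the lattice translates $k\neq0$ stay controlled. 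Once these uniform kernel bounds are secured, the reduction to $Q$ and the concluding integration are routine.
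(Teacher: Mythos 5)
The paper does not prove this lemma at all: it is quoted verbatim from Wainger \cite{Wainger} and Miyachi--Tomita \cite{MT-IUMJ}, so there is no internal proof to compare against. Your reconstruction is, in substance, exactly the argument of those references, and I find no gap in it. The reduction to the fundamental cube is an identity ($\|f_{a,b,\epsilon}\|_{L^p}^p=\int_Q|g_\epsilon|^p\sum_k|\varphi(y+2\pi k)|^p\,dy$, with the lattice sum uniformly bounded since $\varphi\in\Sh$); the dyadic decomposition plus Poisson summation legitimately produces the kernels $I_j(x-2\pi k)$ because each block amplitude is $C_c^\infty$ away from the origin; and the uniformity in $\epsilon$ is correctly located in the symbol bounds $|\partial^\alpha e^{-\epsilon|\xi|}|\lesssim|\xi|^{-|\alpha|}$. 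The stationary-phase arithmetic also checks out: rescaling $\xi=R\eta$ turns the phase into $R^a\bigl(|\eta|^a+R^{1-a}\eta\cdot y\bigr)$, so at the critical scale $R_*\approx|y|^{-1/(1-a)}$ one gets $|I_{j_*}(y)|\lesssim R_*^{\,n-na/2-b}=|y|^{-\gamma}$ with $\gamma=\frac{1}{1-a}\bigl(\frac{n(2-a)}{2}-b\bigr)$, and the condition $\gamma<n/p$ is algebraically identical to $b>(1-a)(n/2-n/p)+n/2$, with $p=\infty$ forcing $\gamma<0$ as you say. The two points that must be written out carefully --- summability of the non-stationary blocks in $j$ on both sides of the critical scale (which works: $M$ integrations by parts give $R^{n-b-aM}$ for $R\ll R_*$ and $R^{n-b-M}|y|^{-M}$ for $R\gg R_*$, both summing to $O(|y|^{-\gamma}+1)$) and the summability over the translates $k\neq0$ (where $|x-2\pi k|\gtrsim\max\{1,|k|\}$ puts every scale in the non-stationary regime) --- are precisely the ones you flagged as the main obstacle, and they go through by the estimates you describe. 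So the proposal is a correct and complete outline of the cited proof rather than an alternative to anything in the paper.
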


In this section, we will use the following partition of unity $\psi_\ell$, $\ell= 0, 1, 2, \dots$, 
in the definition of Besov spaces;
\begin{align*}
&\supp \psi_0 \subset \{|\xi| \le 2^{3/4}\}, 
\quad
\psi_0 = 1 \quad \text{on} \quad \{|\xi| \le 2^{1/4}\}
\\
&\supp \psi_\ell \subset \{2^{\ell-3/4} \le |\xi| \le 2^{\ell+3/4}\}, 
\quad
\psi_\ell= 1 \quad \text{on} \quad \{2^{\ell -1/4} \le |\xi| \le 2^{\ell + 1/4}\},
\quad 
\ell \ge 1,
\\
&\|\pa^{\alpha}\psi_\ell\|_{L^\infty}
\le
C_{\alpha} 2^{-\ell |\alpha|},
\quad \alpha \in \N_0^n, \xi \in \R^n,
\\
&\sum_{\ell = 0}^\infty \psi_{\ell}(\xi) = 1, 
\quad \xi \in \R^n.
\end{align*}

We take functions $\widetilde{\psi}_\ell \in \Sh(\R^n)$, $\ell = 0, 1, 2, \dots$, such that 
\begin{align*}
&\supp \widetilde{\psi}_0 
\subset 
\{|\xi| \le 2^{1/4}\}
\quad
\text{and}
\quad
\widetilde{\psi}_0
=
1
\quad
\text{on}
\quad
\{|\xi| \le 2^{1/8}\},
\\
&\supp \widetilde{\psi}_\ell \subset \{2^{\ell-1/4} \le |\xi| \le 2^{\ell+1/4}\}, 
\quad
\widetilde{\psi}_\ell = 1 \quad \text{on} \quad \{2^{\ell -1/8} \le |\xi| \le 2^{\ell + 1/8}\},
\quad 
\ell \ge 1,
\\
&\sup_{\ell \in \N_0}\|\F^{-1} \widetilde{\psi}_{\ell}\|_{L^1} < \infty.
\end{align*}

We also use the functions $\vphi, \widetilde{\vphi} \in \Sh(\R^n)$ satisfying the following;
\begin{align*}
&\supp \vphi \subset [-1/4, 1/4]^n,
\quad
|\F^{-1}\vphi(x)| \ge 1 \quad \text{on} \quad [-1, 1]^n,
\\
&
\supp \widetilde{\vphi} \subset [-1/2, 1/2]^n,
\quad
\widetilde{\vphi} = 1 \quad \text{on} \quad  [-1/4, 1/4]^n.
\end{align*}

\begin{lem}\label{lemNecessity1}
Let $1< p_1, \dots,  p_N < \infty$, $0< p< \infty$, $0 < q, q_1, \dots, q_N < \infty$ and $s, s_1, \dots, s_N \in \R$. 
If  the boundedness \eqref{nec-bdd} holds with $m$ given in \eqref{mcrit}, then 
$s \ge -\max\{n/p^{\prime}, n/2\}$.
\end{lem}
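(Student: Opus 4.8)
The plan is to argue by contradiction: assuming the boundedness \eqref{nec-bdd}, equivalently the uniform operator-norm estimate \eqref{operator-norm}, I will derive $s \ge -\theta(p^{\prime})$, where $\theta(p^{\prime}) = \max\{n/p^{\prime}, n/2\}$. The mechanism is a duality test against a Wainger-type function. Fix $0 < a < 1$. For $j = 2, \dots, N$ I take $f_j$ to be a fixed Schwartz function with $\widehat{f_j}$ supported near the origin, so that $\|f_j\|_{B^{s_j}_{p_j, q_j}} \approx 1$ independently of all parameters. For the first input and for the dual tester I take two Wainger functions sharing the \emph{same} phase: with the regularization parameter $\epsilon > 0$, set $f_1 = f_{a, b_1, \epsilon}$ and $g = f_{a, b, \epsilon}$ for decay exponents $b_1, b$ to be chosen. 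Finally I use the tame symbol $\sigma(\xi_1, \dots, \xi_N) = \la (\xi_1, \dots, \xi_N) \ra^{m}$ (times harmless smooth cut-offs localizing $\xi_j$, $j \ge 2$, near $0$); it lies in $S^m_{0,0}(\R^n, N)$ with seminorms trivially bounded, so it is a legitimate competitor in \eqref{operator-norm}.

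The key computation is the pairing $\la T_\sigma(f_1, \dots, f_N), g \ra$. Since $\widehat{f_j}$, $j \ge 2$, concentrate near $0$, the output frequency $\xi_1 + \dots + \xi_N$ is governed by $\xi_1$, and Parseval reduces the pairing to a single lattice sum over the common frequencies $\nu$ of $f_1$ and $g$. The coefficient of $f_1$ at $\nu$ is $|\nu|^{-b_1} e^{i|\nu|^a}$ and that of $\overline{g}$ is $|\nu|^{-b} e^{-i|\nu|^a}$, so the oscillatory phases cancel and one is left with a \emph{positive} sum
\[
\la T_\sigma(f_1, \dots, f_N), g \ra
\approx
\sum_{\nu \in \Z^n \setminus \{0\}}
e^{-2\epsilon |\nu|}
\la \nu \ra^{m}
|\nu|^{-b_1 - b}.
\]
Choosing $b_1, b$ so that $m - b_1 - b \ge -n$ makes this sum diverge as $\epsilon \to 0$. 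On the other hand, by Besov duality (for $1 \le p, q < \infty$) the left-hand side is dominated by $\|T_\sigma(f_1, \dots, f_N)\|_{B^s_{p,q}} \|g\|_{B^{-s}_{p^{\prime}, q^{\prime}}}$, and \eqref{operator-norm} bounds this by a constant times $\|f_1\|_{B^{s_1}_{p_1, q_1}} \|g\|_{B^{-s}_{p^{\prime}, q^{\prime}}}$. Thus a contradiction follows as soon as both $\|f_1\|_{B^{s_1}_{p_1, q_1}}$ and $\|g\|_{B^{-s}_{p^{\prime}, q^{\prime}}}$ stay bounded uniformly in $\epsilon$ while the lattice sum blows up.

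Controlling these two norms is where Lemma \ref{lemWainger} enters and is the crux. Using the partition $\psi_\ell$, $\widetilde{\psi}_\ell$ fixed above, each Besov norm is computed block by block, and uniform boundedness in $\epsilon$ reduces to applying Lemma \ref{lemWainger} on each dyadic annulus: with exponent $p_1$ for $f_1$, forcing a lower bound on $b_1$ in terms of $s_1, p_1, a$, and with exponent $p^{\prime}$ for $g$, forcing a lower bound on $b$ in terms of $-s, p^{\prime}, a$. Substituting the smallest admissible $b_1, b$ together with the value of $m$ from \eqref{mcrit} into the divergence condition $m - b_1 - b \ge -n$ yields an inequality in $s$ depending on the free parameter $a \in (0,1)$; optimizing over $a$ collapses the input contributions and the $(s_j, p_j)$ terms exactly against $m$, leaving precisely $s \ge -\max\{n/p^{\prime}, n/2\}$, the two regimes of the maximum arising according to whether $p^{\prime} \le 2$ or $p^{\prime} \ge 2$.

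I expect the main obstacle to be exactly this exponent bookkeeping: one must verify that the $\epsilon$-uniform Besov bounds for $f_1$ and $g$ are governed \emph{sharply} by the Wainger thresholds on every dyadic block, and that the optimization over $a$ produces the constant $\theta(p^{\prime})$ and not a weaker one. A secondary technical point is the duality pairing when $0 < p < 1$, where $B^{-s}_{p^{\prime}, q^{\prime}}$ is no longer literally the dual; in the regime $\theta(p^{\prime}) = n/2$ (that is, $p \le 2$, in particular all $p < 1$) this is circumvented by replacing the $L^{p^{\prime}}$-duality step by a Plancherel-based pairing, which reaches the $n/2$ bound without invoking quasi-Banach duality.
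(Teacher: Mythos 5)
Your duality scheme cannot produce the stated conclusion, because the dependence on $s$ cancels out of the final inequality. Track the exponents: the tester $g=f_{a,b,\epsilon}$ must lie in $B^{-s}_{p^{\prime},q^{\prime}}$ uniformly in $\epsilon$, which forces $b > -s+(1-a)(n/2-n/p^{\prime})+n/2$, so the admissible $b$ carries a term $-s$; but the order $m$ from \eqref{mcrit} also carries a term $-s$. In the divergence criterion $m-b_1-b\ge -n$ these two occurrences of $-s$ sit on opposite sides and cancel identically, so the necessary condition you extract after optimizing in $a$ contains no $s$ at all (it becomes a relation among $p,p_1,p^{\prime}$ and $\sum_{j\ge 2}s_j$). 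The structural reason is that your output tester $g$ lives at the \emph{same} frequencies $\nu$ as the input $f_1$, so the weight $2^{-\ell s}$ in $\|g\|_{B^{-s}_{p^{\prime},q^{\prime}}}$ exactly undoes the $2^{-\ell s}$ hidden in $\langle\nu\rangle^{m}$. The paper avoids this by forcing the output to sit at frequency $O(1)$: the symbol is supported on $\{\mu_1+\dots+\mu_N=0\}$ with all $|\mu_j|\approx 2^{\ell}$, so $T_{\sigma_\ell}(f_{1,\ell},\dots,f_{N,\ell})$ is a fixed bump times a positive constant, its $B^s_{p,q}$-norm carries no $2^{\ell s}$ factor, and the $-s$ inside $m$ survives to give $s\ge -\max\{n/p^{\prime},n/2\}$. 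No duality is needed, which also disposes of your $0<p<1$ worry.

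A second, independent defect: freezing $f_2,\dots,f_N$ as unit-norm Schwartz functions leaves the terms $\sum_{j\ge 2}\bigl(s_j-\max\{n/p_j,n/2\}\bigr)$ inside $m$ uncompensated; since the $s_j$ are arbitrary reals, your divergence condition can be unattainable (no contradiction ever arises) or can overshoot. One must place Wainger-type data in \emph{every} slot, with all $N$ frequencies of comparable size $2^{\ell}$, so that each $b_j\to\max\{n/p_j,n/2\}$ in the limit $a_j\to 0$ or $1$ and the lattice-point count $\approx 2^{(N-1)n\ell}$ over the hyperplane $\mu_1+\dots+\mu_N=0$ supplies the remaining $(N-1)n$. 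Finally, note that Lemma \ref{lemWainger} bounds the full sum over $\Z^n\setminus\{0\}$, not a single dyadic block with a prescribed decay rate $2^{-\ell s_1}$; the paper sidesteps your ``block-by-block'' step by taking each $f_{j,\ell}$ to be a single frequency-localized piece $\widetilde{\psi}_{\ell}(D)f_{a_j,b_j,\epsilon}$, whose Besov norm is one term $2^{\ell s_j}\|\widetilde{\psi}_{\ell}(D)f_{a_j,b_j,\epsilon}\|_{L^{p_j}}\lesssim 2^{\ell s_j}$ by Young's inequality, and then lets $\ell\to\infty$.
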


\begin{proof}
Let $\{c_{\Mu}\}_{\Mu \in (\Z^n)^{N}}$ be a sequence of complex numbers satisfying $\sup_{\Mu} |c_{\Mu}| \le 1$.
We take sufficiently large number $L > 0$. 
For sufficiently large $\ell \in \N$ satisfying $\ell > L$,
we set 
\begin{align*}
&\sigma_{\ell}(\xi_1, \dots, \xi_N)
=
\sum_{\Mu = (\mu_1,\dots, \mu_N) \in D_\ell}
c_{\Mu} \la \Mu \ra^{m}
\prod_{j=1}^N
\vphi(\xi_j -  \mu_j),
\\
&
\widehat{f_{1, \ell}}(\xi_1)
=
\widetilde{\psi}_{\ell}(\xi_1) \widehat{f_{a_1, b_1, \epsilon}}(\xi_1),
\quad
\widehat{f_{2, \ell}}(\xi_2)
=
\widetilde{\psi}_{\ell}(\xi_2) \widehat{f_{a_2, b_2, \epsilon}}(\xi_2),
\\
&
\widehat{f_{j, \ell}}(\xi_j)
=
\widetilde{\psi}_{\ell- L}(\xi_j) \widehat{f_{a_j, b_j, \epsilon}}(\xi_j),
\quad
j=3, \dots, N.
\end{align*}
where
\begin{align*}
&D_{\ell} 
= 
\left\{
\Mu = (\mu_1, \dots, \mu_N) \in (\Z^n)^N 
: 
\begin{array}{l}
\mu_1+ \mu_2 + \dots + \mu_N = 0,
\quad
2^{\ell -\delta} \le |\mu_2| \le 2^{\ell  + \delta},
\\
2^{\ell -L-\delta} \le |\mu_j| \le 2^{\ell -L + \delta}, 
\quad
j=3, \dots, N
\end{array}
\right\}
\end{align*}
with sufficiently small $\delta >0$ and
\[
0< a_j < 1,
\quad
b_j
=
(1-a_j)
\left(
n/2-n/p_j
\right) 
+
n/2
+ 
\epsilon_j,
\quad
\epsilon_j > 0,
\quad
j=1, \dots, N.
\]
We choose the number $L$ sufficiently large enough to satisfy
\begin{equation}\label{setassum1}
\Mu \in D_{\ell}
\implies
2^{\ell -2 \delta}
\le
|\mu_2 + \dots + \mu_N|
\le
2^{\ell +2\delta}.
\end{equation}
Then, since $(1+|\xi_1|+\dots +|\xi_N|) \approx (1+|\mu_1|+\dots +|\mu_N|)$
if $(\xi_1, \dots, \xi_N) \in \supp \sigma_\ell$  and since $\sup_{\Mu}  |c_{\Mu}| \le 1$, 
we have $\sigma_{\ell} \in S^m_{0,0}(\R^n, N)$.
Furthermore, since 
$\psi_{k} \widetilde{\psi}_{\ell} = \widetilde{\psi}_{\ell}$ 
if $k = \ell$ 
and 
$0$ 
otherwise, 
Lemma \ref{lemWainger} and Young's inequality yield that
\begin{align}\label{f1}
\begin{split}
\|f_{j, \ell}\|_{B^{s_j}_{p_j, q_j}} 
&=2^{\ell s_j}\|\widetilde{\psi}_{\ell}(D)f_{a_j, b_j, \epsilon}\|_{L^{p_j}}
\\
&\le
2^{\ell s_j}\|\F^{-1}\widetilde{\psi}_{\ell}\|_{L^1} \|f_{a_j, b_j, \epsilon}\|_{L^{p_j}}
\lesssim 
2^{\ell s_j},
\quad
j=1, 2,
\end{split}
\end{align}
and similarly, 
\begin{align}\label{f3N}
\|f_{j, \ell}\|_{B^{s_j}_{p_j, q_j}} 
\lesssim 
2^{\ell s_j},
\quad
j=3, \dots, N.
\end{align}
Here the implicit constants are independent of $\ell$ and $\epsilon$.

Now, since $\ell$ is sufficiently large and $\delta$ is small, we have
\begin{align*}
&
\supp \vphi (\cdot - \mu_j)
\subset
\{2^{\ell-1/8} \le |\xi_j| \le 2^{\ell + 1/8}\},
\quad
j=1, 2,
\\
&
\supp \vphi (\cdot - \mu_j)
\subset
\{2^{\ell-L-1/8} \le |\xi_j| \le 2^{\ell-L + 1/8}\},
\quad
j=3, \dots, N,
\end{align*}
for $\Mu = (\mu_1, \dots, \mu_N) \in D_{\ell}$,
and consequently we have
\begin{align*}
&
\varphi(\xi_j - \mu_j)
\widetilde{\psi}_{\ell}(\xi_j)
=
\varphi(\xi_j - \mu_j),
\quad
j=1,2,
\\
&
\varphi(\xi_j - \mu_j)
\widetilde{\psi}_{\ell-L}(\xi_j)
= 
\varphi(\xi_j - \mu_j),
\quad
j=3, \dots, N.
\end{align*}
Hence, we obtain
\begin{align*}
T_{\sigma_\ell}(f_{1, \ell}, \dots,  f_{N, \ell})(x)
&=
\sum_{\Mu \in D_{\ell}}
c_{\Mu} \la \Mu \ra^{m}
\prod_{j=1}^N
e^{-\epsilon|\mu_j|}
|\mu_j|^{-b_j}
e^{i|\mu_j|^{a_j}}
\F^{-1}[\vphi(\cdot - \mu_j)](x)
\\
&=
\{\Phi(x)\}^N
\sum_{\Mu \in D_{\ell}}
c_{\Mu} \la \Mu \ra^{m}
\prod_{j=1}^N
e^{-\epsilon|\mu_j|}
|\mu_j|^{-b_j}
e^{i|\mu_j|^{a_j}}
\end{align*}
with $\Phi = \F^{-1} \varphi$,
where we used the fact that $\mu_1 + \dots + \mu_N = 0$ for $\Mu = (\mu_1, \dots, \mu_N) \in D_{\ell}$.
Thus, if we choose $c_{\Mu} = \prod_{j=1}^N e^{-i|\mu_j|^{a_j}}$, then
\[
T_{\sigma_\ell}(f_{1, \ell}, \dots, f_{N, \ell})(x)
=
\{\Phi(x)\}^N
\sum_{\Mu \in D_{\ell}}
\la \Mu \ra^{m}
\prod_{j=1}^N
e^{-\epsilon|\mu_j|}
|\mu_j|^{-b_j}.
\]
Hence we obtain
\begin{equation}\label{normest33}
\|T_{\sigma_\ell}(f_{1, \ell}, \dots, f_{N, \ell})\|_{B^s_{p, q}} 
\approx 
\sum_{\Mu \in D_{\ell}}
\la \Mu \ra^{m}
\prod_{j=1}^N
e^{-\epsilon|\mu_j|}
|\mu_j|^{-b_j}.
\end{equation}
Combining \eqref{nec-bdd}, \eqref{f1}, \eqref{f3N},  and \eqref{normest33}, we obtain
\begin{align*}
\sum_{\Mu \in D_{\ell}}
\la \Mu \ra^{m}
\prod_{j=1}^N
e^{-\epsilon|\mu_j|}
|\mu_j|^{-b_j}
\lesssim
2^{\ell \sum_{j=1}^N s_j}
\end{align*}
where the implicit constant does not depend on $\epsilon$.
Hence, taking the limit $\epsilon \to 0$, we have
\begin{align*}
\sum_{\Mu \in D_{\ell}}
\la \Mu \ra^{m}
\prod_{j=1}^N
|\mu_j|^{-b_j}
\lesssim
2^{\ell \sum_{j=1}^N s_j}.
\end{align*}
By \eqref{setassum1}, the left hand side above can be written as 
\begin{align*}
&
\sum_{\Mu \in D_{\ell}}
\la \Mu \ra^{m}
\prod_{j=1}^N
|\mu_j|^{-b_j}
\\
&=
\sum_{\substack{
2^{\ell-\delta} \le |\mu_2| \le 2^{\ell+\delta} 
\\
2^{\ell-L-\delta} \le |\mu_j| \le 2^{\ell-L+\delta},
\ 
j=3, \dots, N
}}
\la (\mu_2 + \dots + \mu_N, \mu_2, \dots, \mu_N) \ra^m
|\mu_2+ \dots + \mu_N|^{-b_1}
\prod_{j=2}^N
|\mu_j|^{-b_j}
\\
&\approx
2^{\ell(m-\sum_{j=1}^N b_j)}
\\
&\qquad\qquad
\times
\mathrm{card}
\left\{
(\mu_2, \dots, \mu_N) \in (\Z^n)^{N-1}
:
\begin{array}{l}
2^{\ell-\delta} \le |\mu_2| \le 2^{\ell+\delta},
\\
2^{\ell-L-\delta} \le |\mu_j| \le 2^{\ell-L+\delta},
\ 
j=3, \dots, N
\end{array}
\right\}
\\
&\approx
2^{\ell(m-\sum_{j=1}^N b_j+(N-1)n)}.
\end{align*}
Thus we obtain
\begin{equation*}
2^{\ell(m-\sum_{j=1}^N b_j+(N-1)n)}
\lesssim
2^{\ell \sum_{j=1}^N s_j}
\end{equation*}
with the implicit constant independent of $\ell$.
Since $\ell$ is arbitrarily large, we obtain
\begin{align*}
\sum_{j=1}^N 
s_j
&\ge 
m-\sum_{j=1}^N b_j +(N-1)n
=
m
-\sum_{j=1}^N 
\left\{
(1-a_j)\left(\frac{n}{2}-\frac{n}{p_j}\right) +\frac{n}{2}
+ \epsilon_j
\right\} +(N-1)n.
\end{align*}
Taking the limits as $a_j \to 0$ if $1 < p_j \le 2$, $a_j \to 1$ if $ 2 < p_j < \infty$, and $\epsilon_j \to 0$, we conclude that
\begin{align*}
\sum_{j=1}^N 
s_j 
\ge 
m
-
n
+
\sum_{j =1}^N
\max
\left\{
\frac{n}{p_j}, \frac{n}{2}
\right\}
=
-n
+\min
\left\{
\frac{n}{p}, \frac{n}{2}
\right\}
+
\sum_{j=1}^N 
s_j 
-
s
,
\end{align*}
which means 
$s \ge -\max \{n/p^{\prime}, n/2 \}$.
The proof is complete.
\end{proof}

\begin{lem}\label{lemNecessity2}
Let $1< p_1, \dots,  p_N < \infty$, $0< p< \infty$, $1 < q, q_1, \dots, q_N < \infty$ and $s, s_1, \dots, s_N \in \R$. 
If  the boundedness \eqref{nec-bdd} holds with $m$ given in \eqref{mcrit}, then $s_j \le \max \{n/p_j, n/2\}$, $j=1, \dots, N$.
\end{lem}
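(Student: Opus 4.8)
The plan is to deduce the upper bound on $s_1$ from the lower bound on the output smoothness already obtained in Lemma \ref{lemNecessity1}, by passing to the first transpose of $T_\sigma$. Since the class $S^m_{0,0}(\R^n, N)$ and the hypotheses are invariant under permutations of the slots $1, \dots, N$, it suffices to treat $j = 1$; I would argue in the main range $1 < p < \infty$, and flag the quasi-Banach range $0 < p \le 1$ as the genuine obstacle below.

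First I would record the duality step. Writing $T_\sigma^{*1}$ for the first transpose, characterized by $\la T_\sigma(f_1, \dots, f_N), g\ra = \la T_\sigma^{*1}(g, f_2, \dots, f_N), f_1\ra$, the Besov dualities $(B^s_{p,q})' = B^{-s}_{p',q'}$ and $(B^{s_1}_{p_1,q_1})' = B^{-s_1}_{p_1',q_1'}$ (valid here because $1 < p, q < \infty$ and $1 < p_1, q_1 < \infty$) turn the assumed boundedness \eqref{nec-bdd} into
\[
\Op(S^m_{0,0}(\R^n, N)) \subset B\big(B^{-s}_{p',q'} \times B^{s_2}_{p_2,q_2} \times \dots \times B^{s_N}_{p_N,q_N} \to B^{-s_1}_{p_1',q_1'}\big).
\]
To avoid the subtle transpose calculus for $x$-dependent symbols, I would observe that the proof of Lemma \ref{lemNecessity1} tests the boundedness only against $x$-independent symbols, so it is enough to establish this transposed boundedness for such symbols. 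For $x$-independent $\sigma(\Xxi)$ the first transpose is again $x$-independent, with symbol $\tau(\eta, \xi_2, \dots, \xi_N) = \sigma(-\eta - \xi_2 - \dots - \xi_N, \xi_2, \dots, \xi_N)$; under this substitution the two frequency sums $1 + |\eta| + \sum_{j \ge 2}|\xi_j|$ and $1 + |\xi_1| + \sum_{j \ge 2}|\xi_j|$ are comparable, so $\tau \in S^m_{0,0}(\R^n, N)$ with the same order, and (being an involution) the assignment $\sigma \mapsto \tau$ exhausts all $x$-independent symbols of the class. Hence the transposed boundedness holds for every $x$-independent $\tau$.

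Next I would check that $m$ is exactly the critical order \eqref{mcrit} for the transposed configuration. Using the elementary identities $\min\{n/p_1', n/2\} = n - \max\{n/p_1, n/2\}$ and $\max\{n/p', n/2\} = n - \min\{n/p, n/2\}$, the critical order for the transposed data (output exponent $p_1'$ and smoothness $-s_1$; first input $(p', -s)$; remaining inputs $(p_j, s_j)$) equals
\[
\min\Big\{\tfrac{n}{p_1'}, \tfrac{n}{2}\Big\} - \max\Big\{\tfrac{n}{p'}, \tfrac{n}{2}\Big\} - \sum_{j=2}^N \max\Big\{\tfrac{n}{p_j}, \tfrac{n}{2}\Big\} + \Big(s_1 - s + \sum_{j=2}^N s_j\Big) = m.
\]
Therefore Lemma \ref{lemNecessity1}, applied to the transposed boundedness (whose output exponent $p_1'$ lies in $(1,\infty)$ and whose input and smoothness parameters satisfy its hypotheses), yields $-s_1 \ge -\max\{n/(p_1')', n/2\} = -\max\{n/p_1, n/2\}$, that is, $s_1 \le \max\{n/p_1, n/2\}$.

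The main obstacle is the range $0 < p \le 1$, where $B^s_{p,q}$ is only quasi-Banach and its dual carries a smoothness shift rather than being $B^{-s}_{p',q'}$, so the duality step fails and the transposed first-input exponent $p' = \infty$ falls outside Lemma \ref{lemNecessity1}. In this range I would instead run a direct Wainger-type construction in the spirit of Lemma \ref{lemNecessity1}: place $f_{1,\ell}$ in the single frequency band $2^\ell$ and fix $f_2, \dots, f_N$ at frequencies of order one, and choose an $x$-independent symbol $\sigma_\ell$ supported where $|\mu_1| \approx 2^\ell$ and $|\mu_j| \approx 1$ for $j \ge 2$ so that $T_{\sigma_\ell}(f_{1,\ell}, \dots, f_{N,\ell})$ is concentrated at frequency $\approx 2^\ell$; comparing its $B^s_{p,q}$ norm against $\prod_j \|f_{j,\ell}\|_{B^{s_j}_{p_j,q_j}}$ and letting $\ell \to \infty$ (after the limits $\epsilon \to 0$ and $a_1 \to 0$ or $a_1 \to 1$ according as $p_1 \le 2$ or $p_1 > 2$) forces $s_1 \le \max\{n/p_1, n/2\}$, the cancellation of the remaining parameters being exactly the arithmetic identity for the critical order used above.
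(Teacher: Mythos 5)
Your duality step is correct and is essentially the paper's own argument for its Case I: the paper also passes to the first transpose $T_{\sigma^{*1}}$, notes that $\sigma\in S^m_{0,0}(\R^n,N)$ if and only if $\sigma^{*1}\in S^m_{0,0}(\R^n,N)$, checks that the critical order is preserved by the substitutions $\min\{n/p_1',n/2\}=n-\max\{n/p_1,n/2\}$ and $\max\{n/p',n/2\}=n-\min\{n/p,n/2\}$, and then invokes Lemma \ref{lemNecessity1}. (The paper only uses this for $2<p<\infty$ and treats $0<p\le 2$ by a direct construction, but your observation that the duality argument is valid for all $1<p<\infty$ is correct.)

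The genuine gap is in your fallback construction for $0<p\le 1$, and it is structural rather than technical. If $f_{1,\ell}$ sits at frequency $\approx 2^{\ell}$, the remaining inputs at frequencies $O(1)$, and the symbol is supported on $|\mu_1|\approx 2^{\ell}$, $|\mu_j|\approx 1$, then the output is also concentrated at frequency $\approx 2^{\ell}$. The test ratio $\|T_{\sigma_\ell}(f_{1,\ell},\dots)\|_{B^s_{p,q}}\big/\prod_j\|f_{j,\ell}\|_{B^{s_j}_{p_j,q_j}}$ then carries a net smoothness factor $2^{\ell(s-s_1)}$, while the symbol weight $\la\Mu\ra^{m}\approx 2^{\ell m}$ carries $+s_1-s$ through \eqref{mcrit}; so $s_1$ and $s$ cancel identically and the resulting necessary condition cannot involve $s_1$. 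Carrying out your construction for $N=2$ (Khintchine, then $\epsilon\to 0$ and $a_1\to 0$ or $1$) yields $s_2\le n/2-\min\{n/p,n/2\}+\max\{n/p_2,n/2\}$ --- a condition on $s_2$, not on $s_1$ --- and no choice of coefficients or of the profile of $f_1$ within this frequency configuration can do better. What is needed, and what the paper does for $0<p\le 2$, is the opposite configuration: $\widehat{f_1}$ is a fixed bump at frequency $O(1)$, so that $\|f_1\|_{B^{s_1}_{p_1,q_1}}$ is a constant independent of $\ell$ and of $s_1$; the Wainger-type functions occupy slots $2,\dots,N$ at frequencies $\approx 2^{\ell}$; and the output is at frequency $\approx 2^{\ell}$. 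Then $s_1$ survives only through $m$, and the Khintchine lower bound
\begin{equation*}
\Bigl(\sum_{\nu:\,|\nu|\approx 2^{\ell}}|d_{\nu}|^{2}\Bigr)^{1/2}\approx 2^{\ell n/2}\,|d_{\nu}|,
\end{equation*}
combined with $\min\{n/p,n/2\}=n/2$ for $p\le 2$, produces exactly $s_1\le\max\{n/p_1,n/2\}$. You should replace your $0<p\le 1$ argument with this dual configuration (which in fact covers all $0<p\le 2$).
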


\begin{proof}
It is sufficient to prove that $s_1 \le \max\{n/p_1, n/2\}$ by symmetry.

\bigskip
\textbf{Case I : $2 < p < \infty$.}
It is well known that 
the multilinear pseudo-differential operator $T_{\sigma}$ with $\sigma \in S^m_{0, 0}(\R^n, N)$ is bounded from 
$B^{s_1}_{p_1, q_1}  
\times 
B^{s_2}_{p_2, q_2} 
\times  
\dots
\times B^{s_N}_{p_N, q_N}$ to $B^{s}_{p, q}$, then
$T_{\sigma^{*1}}$ is bounded from 
$B^{-s}_{p^\prime, q^\prime} \times \dots \times B^{s_N}_{p_N, q_N} \times $ 
to 
$B^{-s_1}_{p_1^\prime, q_1^\prime}$. 
Here $\sigma^{*1}$ is a multilinear symbol defined by
\[
\int_{\R^n}
T_{\sigma}(f_1, f_2, \dots, f_N)(x) g(x)
\,
dx
=
\int_{\R^n}
T_{\sigma^{*1}}(g, f_2, \dots, f_N)(x) f_1(x)
\,
dx.
\]
Since $\sigma \in S^m_{0, 0}(\R^n, N)$ if and only if $\sigma^{*1} \in S^{m}_{0,0}(\R^n, N)$,
the boundedness \eqref{nec-bdd} holds if and only if
\[
\Op(S^m_{0, 0}(\R^n, N))
\subset
B(B^{-s}_{p^\prime, q^\prime} \times \dots \times B^{s_N}_{p_N, q_N} \to B^{-s_1}_{p_1^\prime, q_1^\prime}).
\]
Thus it follows from Lemma \ref{lemNecessity1} that
$-s_1 \ge -\max\{n/(p_1^\prime)^{\prime}, n/2 \}$,
that is, 
$s_1 \le \max\{n/p_1, n/2\}$.

\bigskip
\textbf{Case I\hspace{-0.1em}I : $0< p \le 2$.}
Let $L \in \N_0$ be a sufficently large.
For $\ell \in \N$ satisfying $\ell > L$, we set 
\begin{align*}
&\sigma_{\ell}(\xi_1, \dots, \xi_N)
=
\vphi(\xi_1)
\left(
\sum_{\Mu = (\mu_2, \dots, \mu_N)  \in D_{\ell}}
c_{\Mu} \la \Mu \ra^{m}
\prod_{j=2}^{N}
\vphi(\xi_j-\mu_j)
\right)
,
\\
&
\widehat{f}_1(\xi_1) = \widetilde{\vphi}(\xi_1),
\quad
\widehat{f_{2, \ell}}(\xi_2) 
=
\widetilde{\psi}_{\ell}(\xi_2) \widehat{f_{a_2, b_2, \epsilon}}(\xi_{2}),
\\
&\widehat{f_{j, \ell}}(\xi_j) 
= 
\widetilde{\psi}_{\ell-L}(\xi_j) \widehat{f_{a_j, b_j, \epsilon}}(\xi_j),
\quad
j=3, \dots, N,
\end{align*}
where $\{c_{\Mu}\}_{\Mu}$ be a sequence of complex numbers such that $\sup_{\Mu} |c_{\Mu}| \le 1$, and
\begin{align*}
D_{\ell}
=
\left\{
\Mu
=
(\mu_2, \dots, \mu_{N})
\in (\Z^n)^{N-1}
:
\begin{array}{l}
2^{\ell - \delta}
\le
|\mu_2 + \dots + \mu_{N}|
\le
2^{\ell + \delta},
\\
2^{\ell - L - \delta}
\le
|\mu_j|
\le
2^{\ell - L + \delta},
\quad
j=3, \dots, N
\end{array}
\right\}.
\end{align*}
with sufficiently small $\delta > 0$.
We take the number $L$ sufficiently large so that 
\begin{align}\label{murange2}
\Mu
\in 
D_{\ell}
\implies
2^{\ell - 2\delta}
\le
|\mu_{2}|
\le
2^{\ell + 2\delta}.
\end{align}
We see that $\sigma_{\ell} \in S^m_{0,0}(\R^n, N)$ by the same argument as in the proof of Lemma \ref{lemNecessity1}, and hence \eqref{operator-norm} yields that 
\begin{equation}\label{operator-norm-2}
\|T_{\sigma_\ell}\|_{B^{s_1}_{p_1, q_1} \times \dots \times B^{s_N}_{p_N, q_N} \to B^{s}_{p, q}} \lesssim
1
\end{equation}
with the implicit constant independent of $c_{\Mu}$ and $\ell$.
We also have
\begin{align}\label{f2}
\|f_1\|_{B^{s_1}_{p_1,q_1}} \lesssim 1,
\quad
\|f_{j, \ell}\|_{B^{s_j}_{p_j, q_j}} \lesssim 2^{\ell s_j}, \quad j=2, \dots, N.
\end{align}
Moreover, since $\vphi\widetilde{\vphi} = \vphi$ and
\begin{align*}
&\vphi(\xi_2 - \mu_2) \widetilde{\psi}_{\ell}(\xi_2)  = \vphi(\xi_2 - \mu_2)
\\
&\vphi(\xi_j-\mu_j)\widetilde{\psi}_{\ell-L}(\xi_j) = \vphi(\xi_j-\mu_j), \quad j=3, \dots, N,
\end{align*}
if $\Mu  \in D_{\ell}$, then we have
\begin{align*}
T_{\sigma_{\ell}}(f_{1}, f_{2, \ell} \dots, f_{N, \ell})(x)
&=
\{\Phi(x)\}^N
\sum_{\Mu \in D_{\ell}}
c_{\Mu} \la \Mu \ra^{m}
\prod_{j=2}^{N}
e^{-\epsilon |\mu_j|}
|\mu_j|^{-b_j}
e^{i|\mu_j|^{a_j}}
e^{i \mu_j \cdot x}
\end{align*}
Let $\{r_{\mu}(\omega)\}_{\mu \in \Z^n}$, $\omega \in [0, 1]^n$, be a sequence of the Rademacher functions 
enumerated in such a way that their index set is $\Z^n$
(for the definition of the Rademacher function, see, e.g., \cite[Appendix C]{Grafakos-Classical}). 
If we choose $\{c_{\Mu}\}_{\Mu}$ as 
\[
c_{\Mu} = r_{\mu_2+ \dots + \mu_{N}}(\omega) \prod_{j=2}^{N} e^{-i |\mu_j|^{a_j}},
\]
then
\begin{align*}
T_{\sigma_{\ell}}(f_{1}, f_{2, \ell} \dots, f_{N, \ell})(x)
&=
\{\Phi(x)\}^N
\sum_{\Mu \in D_{\ell}}
r_{\mu_2 + \dots + \mu_N}(\omega) 
\la \Mu \ra^{m}
\prod_{j=2}^{N}
e^{-\epsilon |\mu_j|}
|\mu_j|^{-b_j}
e^{i \mu_j \cdot x}
\\
&=
\{\Phi(x)\}^N
\sum_{\nu : 2^{\ell-\delta} \le |\nu| \le 2^{\ell + \delta}}
r_{\nu}(\omega) 
e^{i \nu \cdot x}
\sum_{\substack{\Mu \in D_{\ell} \\ \mu_2 + \dots + \mu_{N} = \nu}}
\la \Mu \ra^{m}
\prod_{j=2}^{N}
e^{-\epsilon |\mu_j|}
|\mu_j|^{-b_j}.
\end{align*}
Since  $\ell$ is sufficiently large, we have
\begin{align*}
\supp \F[T_{\sigma_{\ell}}(f_1, f_{2, \ell}, \dots, f_{N, \ell})]
&
\subset
\bigcup_{\nu : 2^{\ell-\delta} \le |\nu| \le 2^{\ell + \delta}}
\supp[\vphi * \dots * \vphi](\cdot - \nu)
\subset
\{
2^{\ell-1/4}
\le
|\zeta|
\le
2^{\ell+1/4}
\},
\end{align*}
and consequently, we obtain 
$\|T_{\sigma_{\ell}}(f_1, f_{2, \ell}, \dots, f_{N, \ell})\|_{B^{s}_{p,q}}= 2^{\ell s}\|T_{\sigma_{\ell}}(f_1, f_{2, \ell}, \dots, f_{N, \ell})\|_{L^p}$.
By the assumption $|\Phi| \ge 1$ on $[-1, 1]^n$, 
\begin{align*}
\int_{[0, 1]^n}
\|T_{\sigma_{\ell}}(f_1, f_{2, \ell}, \dots, f_{N, \ell})\|_{B^{s}_{p,q}}^p
\,
d\omega
&=
2^{\ell s p}
\int_{[0, 1]^n}
\|T_{\sigma_{\ell}}(f_1, f_{2, \ell}, \dots, f_{N, \ell})\|_{L^p}^p
\,
d\omega
\\
&\gtrsim
2^{\ell s p}
\int_{[-1, 1]^n}
\left(
\int_{[0, 1]^n}
\left|
\sum_{\nu : 2^{\ell-\delta} \le |\nu| \le 2^{\ell + \delta}}
r_{\nu}(\omega) 
e^{i \nu \cdot x}
d_{\nu, \epsilon}
\right|^p
\,
d\omega
\right)
dx,
\end{align*}
where $d_{\nu, \epsilon}$ is defined by
\[
d_{\nu, \epsilon}
=
\sum_{\substack{\Mu \in D_{\ell} \\ \mu_2 + \dots + \mu_{N} =\nu} }
\la \Mu \ra^{m}
\prod_{j=2}^{N}
e^{-\epsilon |\mu_j|}
|\mu_j|^{-b_j}.
\]
Hence, by Khintchine's inequality (see, e.g., \cite[Appendix C]{Grafakos-Classical}), 
we have
\begin{align*}
\int_{[-1, 1]^n}
\left(
\int_{[0, 1]^n}
\left|
\sum_{\nu : 2^{\ell-\delta} \le |\nu| \le 2^{\ell + \delta}}
r_{\nu}(\omega) 
e^{i \nu \cdot x}
d_{\nu, \epsilon}
\right|^p
\,
d\omega
\right)
dx
&\approx
\int_{[-1, 1]^n}
\left(
\sum_{\nu : 2^{\ell-\delta} \le |\nu| \le 2^{\ell + \delta}}
|d_{\nu, \epsilon}|^{2}
\right)^{p/2}
dx
\\
&\approx
\left(
\sum_{\nu : 2^{\ell-\delta} \le |\nu| \le 2^{\ell + \delta}}
|d_{\nu, \epsilon}|^{2}
\right)^{p/2}
\end{align*}
Combining 
\eqref{nec-bdd}, 
\eqref{operator-norm-2}
and
\eqref{f2},
we obtain
\begin{align*}
\left(
\sum_{\nu : 2^{\ell-\delta} \le |\nu| \le 2^{\ell + \delta}}
|d_{\nu, \epsilon}|^{2}
\right)^{1/2}
&\lesssim
2^{-\ell s}
\times
\left(
\int_{[0, 1]^n}
\|T_{\sigma_{\ell}}(f_1, f_{2, \ell}, \dots, f_{N, \ell})\|_{B^{s}_{p,q}}^p
\,
d\omega
\right)^{1/p}
\\
&\lesssim
2^{-\ell s}
\times
\|f_1\|_{B^{s_1}_{p_1, q_1}}
\prod_{j=2}^N
\|f_{j, \ell}\|_{B^{s_j}_{p_j, q_j}}
\\
&\lesssim
2^{\ell (\sum_{j=2}^{N} s_j -s)}
\end{align*}
Since the sums over $\nu$ and $\Mu$ are finite sums,  
we have by taking the limit $\epsilon \to 0$
\begin{equation} \label{estfordnu}
\left(
\sum_{\nu : 2^{\ell-\delta} \le |\nu| \le 2^{\ell + \delta}}
|d_{\nu}|^{2}
\right)^{1/2}
\lesssim
2^{\ell (\sum_{j=2}^{N} s_j -s)}
\end{equation}
with 
\[
d_{\nu}
=
\sum_{\substack{\Mu \in D_{\ell} \\ \mu_2 + \dots + \mu_{N} =\nu} }
\la \Mu \ra^{m}
\prod_{j=2}^{N}
|\mu_j|^{-b_j}.
\]
Recalling \eqref{murange2}, 
for $\nu \in \Z^n$ satisfying $\nu : 2^{\ell-\delta} \le |\nu| \le 2^{\ell + \delta}$, we have 
\begin{align*}
d_{\nu}
&\approx
2^{\ell (m-\sum_{j=2}^{N} b_j)}
\\
&\qquad
\times
\mathrm{card}
\left\{
(\mu_3, \dots, \mu_N) \in (\Z^n)^{N-2} : 
2^{\ell - L - \delta}
\le
|\mu_j|
\le
2^{\ell - L + \delta},
\ 
j=3, \dots, N
\right\}
\\
&\approx
2^{\ell (m-\sum_{j=2}^{N} b_j-(N-2)n)}.
\end{align*}
Combining the above estimates, we obtain
\[
2^{\ell(m -\sum_{j=2}^{N} b_j +(N-2)n + n/2)}
\lesssim
2^{\ell (\sum_{j=2}^{N} s_j -s)}
\]
with the implicit constant independent of $\ell$.
Since $\ell$ is sufficiently large, we obtain
\begin{align*}
\sum_{j=2}^{N}
s_j 
-s
&\ge
m
-\sum_{j=2}^{N} b_j
+(N-2)n + \frac{n}{2}
\\
&=
m
-
\left\{
\sum_{j=2}^{N}
(1-a_j) 
\left(
\frac{n}{2}-\frac{n}{p_j}
\right)
+\frac{n}{2}
+
\epsilon_j
\right\}
+(N-2)n + \frac{n}{2}
\end{align*}
If we take limits as $a_j \to 0$ if $1 < p_j \le 2$, 
$a_j \to 1$ if $2 < p_j < \infty$ and $\epsilon_j \to 0$,
then we obtain
\begin{align*}
\sum_{j=2}^{N}
s_j 
-s
\ge
m
+
\sum_{j=2}^{N}
\max\left\{ \frac{n}{p_j}, \frac{n}{2}\right\}
- \frac{n}{2}
=
-\max\left\{\frac{n}{p_1}, \frac{n}{2}\right\}
+
\sum_{j=1}^N
s_j
-s,
\end{align*}
which means
$s_1 \le \max\{n/p_1, n/2\}$.
The proof  is complete.
\end{proof}

In the rest of this section, we prove Theorem \ref{thmnec}. 
The basic ideas  go back to \cite{KMT-JFA} and \cite{Shida-Sobolev}.
\begin{proof}[Proof of Theorem \ref{thmnec}]
Let $0 < p, p_1, \dots, p_N \le \infty$, $0 < q, q_1, \dots, q_N \le \infty$ and $s, s_1 \dots, s_N \in \R$.
We assume that the boundedness
\begin{align*}
&
\Op(S^m_{0, 0}(\R^n, N))
\subset
B(B^{s_1}_{p_1, q_1} \times \dots \times B^{s_N}_{p_N, q_N} \to B^{s}_{p, q})
\end{align*}
holds with
\begin{align*}
m = \min \left\{ \frac{n}{p}, \frac{n}{2} \right\} 
-
\sum_{j=1}^N \max\left\{ \frac{n}{p_j}, \frac{n}{2} \right\}
+
\sum_{j=1}^N s_j  -s.
\end{align*}
It is already proved in Theorem \ref{main1} that the boundedness
\[
\Op(S^{-(N-1)(n/2-t)}_{0, 0}(\R^n, N))
\subset
B(B^{t}_{2, 2} \times \dots \times B^{t}_{2, 2} \to B^{t}_{2, 2})
\]
holds for any $t \in \R$ satisfying $-n/2 < t < n/2$.
Then, by complex interpolation, we have
\[
\Op(S^{\widetilde{m}}_{0, 0}(\R^n, N))
\subset
B(B^{\widetilde{s}_1}_{\widetilde{p}_1, \widetilde{q}_1} 
\times \dots \times 
B^{\widetilde{s}_N}_{\widetilde{p}_N, \widetilde{q}_N} \to B^{\widetilde{s}}_{\widetilde{p}, \widetilde{q}}),
\]
where $0< \theta < 1$,  
\begin{align*}
&1/\widetilde{p}_j
=
(1-\theta)/2
+
\theta/p_j,
\quad
1/\widetilde{q}_j
=
(1-\theta)/2
+
\theta/q_j,
\quad
\widetilde{s_j} 
=
(1-\theta)t + \theta s_j ,
\quad
j=1, \dots, N,
\\
&1/\widetilde{p}
=
(1-\theta)/2
+
\theta/p,
\quad
1/\widetilde{q}
=
(1-\theta)/2
+
\theta/q,
\quad
\widetilde{s} 
=
(1-\theta)t + \theta s,
\end{align*}
and
\begin{align*}
\widetilde{m}
&=
 - (N-1)\left(\frac{n}{2} - t\right) \times (1-\theta) 
+
\left(
\min
\left\{
\frac{n}{p}, \frac{n}{2}
\right\}
-
\sum_{j=1}^N
\max
\left\{
\frac{n}{p_j}, \frac{n}{2}
\right\}
+\sum_{j=1}^N s_j -s
\right)
\times 
\theta
\\
&=
\min
\left\{
\frac{n}{\widetilde{p}}, \frac{n}{2}
\right\}
-
\sum_{j=1}^N
\max
\left\{
\frac{n}{\widetilde{p}_j}, \frac{n}{2}
\right\}
+
\sum_{j=1}^N
\widetilde{s}_j
-\widetilde{s}.
\end{align*}

Since $1 < \widetilde{p}_j, \widetilde{q}_j, \widetilde{q} < \infty$ for sufficiently small $\theta$, it follows from
Lemma \ref{lemNecessity2} that 
\begin{align*}
&\widetilde{s}_j 
\le 
\max
\left\{
\frac{n}{\widetilde{p}_j},
\frac{n}{2}
\right\},
\quad
j=1, \dots, N,
\end{align*}
which means that
\begin{align*}
&s_j 
\le 
\max
\left\{
\frac{n}{p_j},
\frac{n}{2}
\right\}
+ 
\frac{1-\theta}{\theta} \left(\frac{n}{2} - t \right),
\quad
j=1, \dots, N,
\end{align*}
Taking the limit $t \to n/2$,  
we obtain the desired conclusion. 

On the other hand, since $1 < \widetilde{p}_j < \infty$ if $\theta$ is sufficiently small, Lemma \ref{lemNecessity1} yields that
\begin{align*}
&\widetilde{s} 
\ge 
-
\max
\left\{
\frac{n}{{\widetilde{p}}^{\prime}},
\frac{n}{2}
\right\}.
\end{align*}
We remark that $1< \widetilde{p} < \infty$ for  sufficiently small $\theta$,
and hence we can write the above inequality as
\[
s 
\ge 
-
\max
\left\{
\frac{n}{p^{\prime}},
\frac{n}{2}
\right\}
-
\frac{1-\theta}{\theta}
\left(
t
+
\frac{n}{2}
\right).
\]
We conclude that $s \ge -\max\{n/p^\prime, n/2\}$ by taking the limit $t \to -n/2$.

The proof of Theorem \ref{thmnec} is complete.
\end{proof}

\section*{Acknowledgement}
The author sincerely expresses his thanks to Professor Naohito Tomita 
for helpful advice and warm encouragement.

\end{document}